\newtheorem{observation}{Observation}
\newtheorem{theorem}{Theorem}
\newtheorem{lemma}{Lemma}
\newtheorem{proposition}{Proposition}
\newtheorem{definition}{Definition}
\newtheorem{corollary}{Corollary}
\newcommand{\word}[1]{\ensuremath{\bar{#1}}}
\newcommand{\wordcommand}[1]{\word{#1}}
\newcommand{\necklace}[1]{\ensuremath{\tilde{\mathbf{#1}}}}
\newcommand{\Angle}[1]{\ensuremath{\langle #1 \rangle}}
\title{Ranking Binary Unlabelled Necklaces in Polynomial Time}
\author{Duncan Adamson\footnote{Supported by Icelandic Research Fund grant no. 217965} \and ICE-TCS, Department of Computer Science, Reykjavik University}
\begin{document}

\maketitle

\begin{abstract}
    Unlabelled Necklaces are an equivalence class of cyclic words under both the rotation (cyclic shift) and the relabelling operations.
    The relabelling of a word is a bijective mapping from the alphabet to itself.
    The main result of the paper is the first  polynomial-time algorithm for ranking unlabelled necklaces of a binary alphabet.
    The time-complexity of the algorithm is $O(n^6 \log^2 n)$, where $n$ is the length of the considered necklaces.
    The key part of the algorithm is to compute the rank of any word with respect to the set of unlabelled necklaces by finding three other ranks: the rank over all necklaces, the rank over symmetric unlabelled necklaces, and the rank over necklaces with an enclosing labelling. 
    The last two concepts are introduced in this paper.
    % These ranks are key components to our algorithm in order to decompose the problem into parts.
    % Additionally, this ranking procedure is used to build a polynomial-time unranking algorithm.
\end{abstract}

\section{Introduction}

% Counting, ordering, and generating basic discrete structures such as strings, permutations, set-partitions, etc. are 
% fundamental tasks in computer science. A variety of such algorithms are assembled in the fourth volume of the prominent
% series ``The art of computer programming'' by D.~Knuth~\cite{Knuth4a}. Nevertheless, this research direction remains 
% very active~\cite{Mutze20}.

For classes of words under lexicographic (or dictionary) order, a unique integer can be assigned to every word corresponding to the number of words smaller than it.
Such an integer is called the \emph{rank} of a word.
The {\em ranking} problem asks to compute the rank of a given word.
Ranking has been studied for various objects including partitions~\cite{RankPartition}, permutations~\cite{RankPerm1,RankPerm2}, combinations~\cite{RankComb}, etc.
% Unranking has similarly been studied for objects such as permutations~\cite{RankPerm2} and trees~\cite{Gupta1983,Pallo1986}.

The ranking problem is straightforward for the set of all words over a finite alphabet (assuming the standard lexicographic order), however this ceases to be the case once additional symmetry is introduced.
One such example is combinatorial necklaces \cite{Graham1994}.
A \emph{necklace}, also known as a \emph{cyclic word}, is an equivalence class of all words under the cyclic rotation operation, also known as a cyclic shift.
Necklaces are classical combinatorial objects and they remain an object of study in other contexts such as total search problems~\cite{SplitNeck19} or circular splicing systems~\cite{CircularSplicing}.
% The \emph{rank} of a word $w$ for a given set $S$ and its ordering is the number of words in $S$ that are smaller than $w$.
% Often the set is a class of words, for instance all words of a given length over some alphabet.
The first class of cyclic words to be ranked were \emph{Lyndon words} - fixed length aperiodic cyclic words - by Kociumaka et al. \cite{Kociumaka2014} who provided an $O(n^3)$ time algorithm, where $n$ is the length of the word.
An algorithm for ranking necklaces - fixed length cyclic words - was given by Kopparty et al. \cite{Kopparty2016}, without tight bounds on the complexity.
A quadratic algorithm for ranking necklaces was provided by Sawada et al. \cite{Sawada2017}.
More recently algorithms have been presented for ranking multidimensional necklaces \cite{adamson2021multidimensional} and bracelets \cite{Adamson2021}.

\emph{Our Results}
This paper presents the first polynomial time algorithm for ranking \emph{binary unlabelled necklaces}.
Informally, binary unlabelled necklaces can be though of as necklaces over a binary alphabet with the additional symmetry over the \emph{relabelling} operation, a bijection from the set of symbols to itself.
% where every symbol of type 1 is replaced with the symbol of type 2, and every symbol of type 2 replace with the symbol of type 1.
Considered in terms of binary values, the words $0001$ and $1110$ are equivilent under the relabelling operation, however $1010$ and $1100$ are not.
We provide an $O(n^6 \log^2 n)$ time algorithm for ranking an unlabelled binary necklace within the set of unlabelled binary necklaces of length $n$.

\section{Preliminaries}
\label{sec:prelims}

Let $\Sigma$ be a finite alphabet.
For the remainder of this work we assume $\Sigma$ to be $\{0,1\}$ where $0 < 1$.
% corresponding to the set $\{1,2,3,\hdots,q\}$, ordered such that $1 < 2 < 3 < \hdots < q$; thus $q = |\Sigma|$. % \duncan{can restate later, but a bit non standard}.
%{\color{red} 
We denote by $\Sigma^*$ the set of all words over $\Sigma$ and by $\Sigma^n$ the set of all words of length $n$.
The notation $\word{w}$ is used to clearly denote that the variable $\word{w}$ is a word.
The length of a word $\word{w} \in \Sigma^*$ is denoted by $|\word{w}|$.
We use $\word{w}_i$, for any $i \in \{1,\hdots,|\word{w}|\}$ to denote the $i^{th}$ symbol of $\word{w}$.
Given two words $\word{w},\word{u} \in \Sigma^*$, the \emph{concatenation operation} is denoted by $\word{w} : \word{u}$, returning the word of length $|\word{w}| + |\word{u}|$ where $(\word{w} : \word{u})_i$ equals either $\word{w}_i$, if $i \leq |\word{w}|$ or $\word{u}_{i - |\word{w}|}$ if $i > |\word{w}|$.
The $t^{th}$ power of a word $\word{w}$, denoted by $\word{w}^t$, equals $\word{w}$ repeated $t$ times.
% Given a word $\word{w}$, the \emph{Parikh vector} of $\word{w}$, denoted by $\Parikh(\word{w})$ is a vector of length $q$, where the $i^{th}$ entry corresponds to the number of times that the $i^{th}$ symbol of $\Sigma$ appears in $\word{w}$.
% For example, given the alphabet $\Sigma = (1,2,3,4)$ and the word  $11124$, $\Parikh(11124) = (3,1,0,1)$.
%}

Let $[n]$ be the ordered sequence of integers from $1$ to $n$ inclusive and let $[i,j]$ be the ordered sequence of integers from $i$ to $j$ inclusive.
% {\color{red}
Given two words $\word{u},\word{v} \in \Sigma^*$,
%where $|\word{u}| \leq |\word{v}|$, 
$\word{u} = \word{v}$ if and only if $|\word{u}| = |\word{v}|$ and $\word{u}_i = \word{v}_i$ for every $i \in [|\word{u}|]$.
A word $\word{u}$ is \emph{lexicographically smaller} than $\word{v}$ if there exists an $i \in [|\word{u}|]$ such that $\word{u}_1 \word{u}_2 \hdots \word{u}_{i-1} = \word{v}_1 \word{v}_2 \hdots \word{v}_{i-1}$ and $\word{u}_i < \word{v}_i$.
Given two words $\word{v},\word{w} \in \Sigma^*$ where $|\word{v}| \neq |\word{w}|$, $\word{v}$ is smaller than $\word{w}$ if $\word{v}^{|\word{w}|} < \word{w}^{|\word{v}|}$ or $\word{v}^{|\word{w}|} = \word{w}^{|\word{v}|}$ and $|\word{v}| < |\word{w}|$.
For a given set of words $\mathbf{S}$, the \emph{rank} of $\word{v}$ with respect to $\mathbf{S}$ is the number of words in $\mathbf{S}$ that are smaller than $\word{v}$.

% A \emph{subword} of some given word $\word{w}$ is a contiguous sequence of symbols within $\word{w}$.
The \emph{subword} of a cyclic word $\word{w} \in \Sigma^n$ denoted $\word{w}_{[i,j]}$ is the word $\word{u}$ of length $n + j - i + 1 \bmod n$ such that $\word{u}_{a} = \word{w}_{i + a \bmod n}$, i.e. the word such that the $a^{th}$ symbol of $\word{u}$ corresponds to the symbol at position $i + a \bmod n$ of $\word{w}$.
% Given some word $\word{w}$ and pair of integers $i,j \in [n]$ where $i \leq j$, let $\word{w}_{[i,j]}$ be the subword of $\word{w}$ containing every symbol from position $i$ to position $j$ inclusive.
The value of the $t^{th}$ symbol of $\word{w}_{[i,j]}$ is the value of the symbol at position $i + t - 1$ of $\word{w}$.
By this definition, given $\word{u} = \word{w}_{[i,j]}$, the value of $\word{u}_t$ is the $i + t - 1^{th}$ symbol of $\word{w}$ and the length of $\word{u}$ is $|\word{u}| = j - i + 1$.
The notation $\word{u} \sqsubseteq \word{w}$ denotes that $\word{u}$ is a subword of $\word{w}$.
Further, $\word{u} \sqsubseteq_{i} \word{w}$ denotes that $\word{u}$ is a subword of $\word{w}$ of length $i$.
% If $\word{w} = \word{u} : \word{v}$, then $\word{u}$ is a prefix and $\word{v}$ is a suffix.

The {\em rotation} of a word $\word{w} \in \Sigma^n$ by $r \in [0,n-1]$ returns the word $\word{w}_{[r + 1,n]}  : \word{w}_{[1,r]}$, and is denoted by $\Angle{ \word{w} }_r$, i.e. $\Angle{ \word{w}_1 \word{w}_2 \hdots \word{w}_n }_r = \word{w}_{r + 1} \hdots \word{w}_n \word{w}_1 \hdots \word{w}_r$.
Under the rotation operation, the word $\word{u}$ is equivalent to the word $\word{v}$ if $\word{v} = \Angle{ \word{u}}_r$ for some $r$.
% The $t^{th}$ power of a word $\word{w}$, denoted by $\word{w}^t$, equals $\word{w}$ repeated $t$ times.
% For example $(aab)^3 = aabaabaab$.
A word $\word{w}$ is \emph{periodic} if there is a subword $\word{u} \sqsubseteq \word{w}$ and integer $t \geq 2$ such that $\word{u}^t = \word{w}$.
Equivalently, word $\word{w}$ is \emph{periodic} if there exists some rotation $0 < r < |\word{w}|$ where $\word{w} = \Angle{ \word{w}}_r$.
A word is \emph{aperiodic} if it is not periodic.
The \emph{period} of a word $\word{w}$ is the aperiodic word $\word{u}$ such that $\word{w} = \word{u}^t$.

A \emph{necklace} is an equivalence class of words under the rotation operation.
%A word $\word{w}$ is written as $\necklace{w}$ when treated as a necklace.
The notation $\necklace{w}$ is used to denote that the variable $\necklace{w}$ is a necklace.
Given a necklace $\necklace{w}$, the \emph{canonical representation} of $\necklace{w}$ is the lexicographically smallest element of the set of words in the equivalence class $\necklace{w}$.
The canonical representation of $\necklace{w}$ is denoted by $\Angle{ \necklace{w} }$, and the $r^{th}$ shift of the canonical representation is denoted by $\Angle{\necklace{w}}_{r}$.
Given a word $\word{w}$, $\Angle{\word{w}}$ denotes the canonical representation of the necklace containing $\word{w}$, i.e. the canonical representation of the necklace $\necklace{u}$ where $\word{w} \in \necklace{u}$.
The set of necklaces of length $n$ over an alphabet of size $q$ is denoted by $\mathcal{N}_q^n$.
% the size of which is given by $|\mathcal{N}_q^n |$.
Let $\word{w} \in \mathcal{N}_q^n$ denote that the word $\word{w}$ is the canonical representation of some necklace $\necklace{w} \in \mathcal{N}_q^n$.
An aperiodic necklace, known as a \emph{Lyndon word}, is a necklace representing the equivalence class of some aperiodic word.
Note that if a word is aperiodic, then every rotation of the word is also aperiodic.
The set of Lyndon words of length $n$ over an alphabet of size $q$ is denoted by $\mathcal{L}_q^n$.
% A necklace $\necklace{w}$ has \emph{fixed content} for some given Parikh vector $\vec{p}$ if $\Parikh(\necklace{w}) = \vec{p}$, where $\Parikh(\necklace{w}) = \Parikh(\Angle{\necklace{w}})$.
% The set of fixed content necklaces for a vector $\vec{p}$ is denoted by $\mathcal{N}^{n}_{\vec{p}}$, and the set of fixed content Lyndon words by $\mathcal{L}^n_{\vec{p}}$.

As both necklaces and Lyndon words are classical objects, there are many fundamental results regarding each objects.
The first results for these objects were equations determining the number of necklaces or Lyndon words of a given length.
The number of (1D) necklaces is given by the equation {\footnotesize $|\mathcal{N}_q^n| = \frac{1}{n} \sum\limits_{d | n} \phi\left( \frac{n}{d}\right)q^d$} where $\phi(n)$ is Euler's totient function \cite{Graham1994}.
Similarly the number of Lyndon words is given with the equation {\footnotesize$|\mathcal{L}_q^n| = \sum\limits_{d | n} \mu\left(\frac{n}{d} \right) |\mathcal{N}_q^d |$}, where $\mu(x)$ is the M\"{o}bius function \cite{Graham1994}.
The \emph{rank} of a word $\word{w}$ in the set of necklaces $\mathcal{N}_q^n$ is the number of necklaces with a canonical representation smaller than $\word{w}$.

\subsection{Unlabelled Necklaces.}

An \emph{unlabelled necklace} is a generalisation of the set of necklaces.
At a high level, two words $\word{v},\word{u} \in \Sigma^n$ belong to the same unlabelled necklace class $\necklace{w}$ if there exists some labelling function $\psi(x) : \Sigma \mapsto \Sigma$ and rotation $r \in [n]$ such that $(\Angle{\word{v}}_r)_i = \psi(\word{u}_{i})$ for every $i \in [n]$.
More formally, let $\psi(x)$ be a bijection from $\Sigma$ into $\Sigma$, i.e. a function taking as input some symbol in $\Sigma$ and returning a symbol in $\Sigma$ such that $\{\psi(x) | \forall x \in \Sigma\} = \Sigma$.
For notation $\psi(\word{w})$ is used to denote the word constructed by applying $\psi(x)$ to every symbol in $\word{w}$ in order, formally $\psi(\word{w}) = \psi(\word{w}_1) \psi(\word{w}_2) \hdots \psi(\word{w}_n)$.
Similarly, the notation $\psi(\necklace{w})$ is used to denote the necklace class constructed by applying $\psi(\word{w})$ to every word $\word{w} \in \necklace{w}$.
Further, let $\Psi(\Sigma)$ be the set of all such functions.
The unlabelled necklace $\necklace{w}$ with a canonical representation $\word{w}$ contains every word $\word{v} \in \Sigma^{n}$ where $\psi(\Angle{\word{v}}_r) = \word{w}$ for some $\psi(x) \in \Psi(\Sigma)$ and $r \in [n]$.
As in the labelled case, the canonical representation of an unlabelled necklace $\necklace{w}$, denoted $\Angle{\necklace{w}}$, is the lexicographically smallest word in the equivalence class.
The set of unlabelled $q$-arry necklaces of length $n$ is denoted $\hat{\mathcal{N}}_q^{n}$, and the set of $q$-arry Lyndon words of length $n$ $\hat{\mathcal{L}}_q^{n}$.

In this paper we study \emph{binary unlabelled necklaces}, in other words unlabelled necklaces restricted to a binary alphabet.
In this case $\Sigma = \{0,1\}$ and $\Psi(\Sigma)$ contains the identity function $I(x)$, where $I(x) = x$, and the swapping function $S(x)$ where $S(x) = \begin{cases}
0 & x = 1\\
1 & x = 0
\end{cases}$.
Gilbert and Riordan \cite{gilbert1961symmetry} provide the following equations for computing the sizes of $\hat{\mathcal{N}}_2^{n}$ and $\hat{\mathcal{L}}_2^{n}$:

\begin{align*}
    |\hat{\mathcal{N}}_2^{n}| = \sum\limits_{\text{odd } d | n}\phi(d) 2^{n/d}\\
    |\hat{\mathcal{L}}_2^{n}| = \sum\limits_{\text{odd } d | n}\mu(d) 2^{n/d}
\end{align*}

% Where $\phi(n)$ is Euler's totient function and $\mu(x)$ is the M\"{o}bius function.

% Given a word $\word{w}$ over the binary alphabet, the density of $\word{w}$ is the number of times the smallest symbol $0$ appears in $\word{w}$.
% For example, the density of the word $001101$ is 3, as the symbol $0$ appears 3 times.
% The notation $D(\word{w})$ is used to refer to the density of the word $\word{w}$.
% The density of a (labelled) necklace is equal to $D(\Angle{\necklace{w}})$.
% For unlabelled necklaces, the density corresponds to the number of copies of the symbol $0$ in the canonical representation.

\noindent
In this paper we introduce two subclasses of unlabelled necklaces, the class of \emph{symmetric unlabelled necklaces} and the class of \emph{enclosing unlabelled necklaces} for some given word $\word{w}$.
Observe that a binary unlabelled necklace $\necklace{w}$ may correspond to either one or two (labelled) necklaces.
Informally, a symmetric unlabelled necklaces is such an unlabelled necklaces that corresponds to only a single necklace.
An enclosing unlabelled necklace relative to a word $\word{w}$ is a non-symteric unlabelled necklace corresponding to a pair of necklaces $\necklace{v}$ and $\necklace{u}$ such that $\necklace{v} < \word{w} < \necklace{u}$.
Any Lyndon word that is a symmetric unlabelled necklace is a \emph{symmetric unlabelled Lyndon word}, and any unlabelled Lyndon word that encloses a word $\word{w}$ is an \emph{enclosing unlabelled Lyndon word} of $\word{w}$.
% While a binary unlabelled necklace in general may represent a pair of necklaces, a symmetric unlabelled necklaces only

\begin{definition}[Symmetric Necklaces]
A binary necklace $\necklace{w}$ is \emph{symmetric} if and only if $\necklace{w} = S(\necklace{w})$.
\end{definition}

\begin{definition}[Enclosing Unlabelled Necklaces]
An unlabelled necklace $\necklace{u}$ \emph{encloses} a word $\word{w}$ if $\Angle{\necklace{u}} < \word{w} < \Angle{S(\necklace{u})}$.
An unlabelled necklace $\necklace{u}$ is an \emph{enclosing unlabelled necklace} of $\word{w}$ if $\necklace{u}$ encloses $\word{w}$.
\end{definition}

\subsection{Bounding Subwords}

One important tool that is used in the ranking of unlabelled necklaces are \emph{bounding subwords}, introduced in \cite{Adamson2021}.
Informally, bounding subwords of length $l \leq n$ provide a means to partition $\Sigma^l$ into $n + 2$ sets based on the subwords of some $\word{w} \in \Sigma^n$ of length $l$.
Given two subwords $\word{v}, \word{u} \sqsubseteq_{l} \word{w}$ such that $\word{v} < \word{u}$ the set $S(\word{v}, \word{u})$ contains all words in $\Sigma^l$ that are between the value of $\word{v}$ and $\word{u}$, formally $S(\word{v}, \word{u}) = \{\word{x} \in \Sigma^l | \word{v} \leq \word{x} < \word{u}\}$.
In this paper we are only interested in sets between pairs $\word{v},\word{u} \sqsubseteq_l \word{w}$ where there exists no $\word{s} \sqsubseteq_l \word{w}$ such that $\word{v} < \word{s} < \word{u}$.
As such, we define a subword of $\word{w}$ as \emph{bounding} some word $\word{v}$ if it is the lexicographically largest subword of $\word{w}$ that is smaller than $\word{v}$.
% Definition \ref{def:bounding} formalises this definition.

\begin{definition}[Bounding Subwords]
Let $\wordcommand{w}, \wordcommand{v} \in \Sigma^*$ where $\vert\wordcommand{w}\vert \leq \vert\wordcommand{v}\vert$.
The word $\wordcommand{w}$ is  \emph{bounded} (resp. strictly bounded) by $\wordcommand{s} \sqsubseteq_{\vert\wordcommand{w}\vert} \wordcommand{v}$ if $\wordcommand{s} \leq \wordcommand{w}$ (resp. $\wordcommand{s} < \wordcommand{w}$) and there is no $\wordcommand{u} \sqsubseteq_{\vert\wordcommand{w}\vert} \wordcommand{v}$ such that $\wordcommand{s} < \wordcommand{u} \leq \wordcommand{w}$.
\label{def:bounding}
\end{definition}

% \begin{lemma}[Lemma 2 in \cite{Adamson2021}]
% Let $\wordcommand{w},\wordcommand{v} \in \Sigma^*$,  let $x \in \Sigma$ and let $\wordcommand{s} \sqsubseteq \wordcommand{v}$ be the subword of $\wordcommand{v}$ that bounds $\wordcommand{w}$.
% Let $\wordcommand{s}' \sqsubseteq \wordcommand{v}$ bounds $\wordcommand{w} : x$.
% Either $\wordcommand{s}'$ bounds $\wordcommand{s} : x$, or $\wordcommand{s}' = \wordcommand{s} : y$ for $y > x$.
% \label{lem:wx_part_one}
% \end{lemma}

\begin{proposition}[\cite{Adamson2021}]
Let $\wordcommand{v} \in \Sigma^n$.
The array $WX[\wordcommand{s} \sqsubseteq \wordcommand{v},x \in \Sigma]$, such that $WX[\wordcommand{s},x]$ strictly bounds $\wordcommand{w}:x$ for every $\wordcommand{w}$ strictly bounded by $\wordcommand{s}$, can be computed in $O(k \cdot n^3 \cdot \log(n))$ time where $|\Sigma| = k$.
\label{prop:complexity_WX}
\end{proposition}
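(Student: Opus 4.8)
The plan is to build $WX$ by increasing length: from the data maintained for length $\ell$ one computes all entries indexed by subwords of length $\ell$ (producing length-$(\ell+1)$ bounds) in a single pass. For each length $\ell$ I would maintain the sorted list of the at most $n$ distinct subwords of $\word{v}$ of that length (a cyclic word of length $n$ has exactly one subword of each length per starting position), together with, for every such subword $\word{s}$, the set $\mathrm{next}(\word{s}) \subseteq \Sigma$ of symbols that occur immediately after some occurrence of $\word{s}$ in $\word{v}$; the sentinel ``below-all'' and ``above-all'' classes are carried along explicitly, accounting for the remaining two of the $n+2$ classes.

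First I would establish that the entry is well defined, i.e.\ that the subword strictly bounding $\word{w}:x$ among length-$(\ell+1)$ subwords of $\word{v}$ depends only on $\word{s}$ and $x$, not on the individual $\word{w}$. The argument reduces a length-$(\ell+1)$ comparison to a length-$\ell$ one: for a candidate $\word{t} = \word{t}':y \sqsubseteq_{\ell+1} \word{v}$, with $\word{t}' \sqsubseteq_\ell \word{v}$ and $y\in\Sigma$, the order of $\word{t}$ and $\word{w}:x$ is already decided inside the first $\ell$ symbols whenever $\word{t}' \neq \word{w}$; and because $\word{w}$ is strictly bounded by $\word{s}$, no subword of $\word{v}$ lies in $(\word{s},\word{w}]$, so for every subword $\word{t}'$ of $\word{v}$ one has $\word{t}' < \word{w}$ exactly when $\word{t}' \leq \word{s}$. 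Hence the length-$(\ell+1)$ subwords of $\word{v}$ smaller than $\word{w}:x$ are precisely those $\word{t}':y$ with $\word{t}' < \word{s}$, together with those of the form $\word{s}:y$, $y\in\mathrm{next}(\word{s})$, that beat $\word{w}:x$ on the suffix; the only residual dependence on $\word{w}$ is this last comparison, which in the boundary situations where the $\word{s}$-class abuts a subword of $\word{v}$ collapses to the tie-break $y$ versus $x$. Thus the maximum of this set, which is $WX[\word{s},x]$, is a function of $\word{s}$ and $x$ only; the sentinel classes are handled the same way, their transitions going to a sentinel or to the extreme length-$(\ell+1)$ subword.

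Next I would turn this characterisation into an algorithm: given the sorted list of length-$(\ell+1)$ subwords, $WX[\word{s},x]$ is obtained by a binary search for the largest element whose length-$\ell$ prefix is smaller than $\word{s}$, followed by comparing it against the at most $k$ subwords $\word{s}:y$, $y\in\mathrm{next}(\word{s})$, and keeping the larger. A comparison of two words of length $\leq n$ costs $O(n)$, so one entry costs $O(n\log n)$; with at most $n$ subwords $\word{s}$ and $k$ symbols $x$ this is $O(k\,n^2\log n)$ per length, and $O(k\,n^3\log n)$ over all $\ell = 1,\dots,n$. Refreshing the sorted subword lists and the sets $\mathrm{next}(\cdot)$ costs only $O(n^2\log n)$ per length (refine each length-$\ell$ subword into its extensions and re-sort, using $O(n\log n)$ comparisons of $O(n)$-length words), which stays within the same budget, giving the claimed total.

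The step I expect to be the genuine obstacle is the well-definedness at the boundary rather than the complexity bookkeeping: one has to verify that collapsing all words sharing a bounding subword is legitimate precisely because the suffix tie-break with $x$ captures the only way two such words can order an extension differently, and one has to treat the two sentinel classes and the crossings between sentinel and ordinary classes explicitly. Once that case analysis is pinned down, every remaining step is a one-line lexicographic comparison and the running time is exactly the accounting above.
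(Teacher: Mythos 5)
There is nothing in this paper to compare your argument against: Proposition~\ref{prop:complexity_WX} is imported from \cite{Adamson2021} and stated without proof, so your reconstruction has to be judged on its own terms. On those terms it is plausible and the accounting is right: at most $n$ distinct subwords per length, $k$ symbols, an $O(\log n)$-comparison binary search with $O(n)$-cost comparisons, summed over $n$ lengths, gives $O(k\,n^3\log n)$, and maintaining the sorted subword lists and $\mathrm{next}(\cdot)$ sets stays within that budget. The one place where your argument is muddled is exactly the spot you flag as the obstacle, the tie-break. Under Definition~\ref{def:bounding} as written, if $\word{w}$ is \emph{strictly} bounded by $\word{s}$ then $\word{s}<\word{w}$ and $\word{w}$ is not itself a subword of $\word{v}$ (since $\word{w}\le\word{w}$ would witness a violation); consequently every comparison of $\word{s}:y$ against $\word{w}:x$ is already decided within the first $\ell$ symbols, the suffix tie-break you describe never fires, and your own characterisation shows $WX[\word{s},x]$ is independent of $x$. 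The tie-break on $y$ versus $x$ only becomes the real content if the array is also required to handle the non-strict case $\word{w}=\word{s}$, i.e.\ words merely \emph{bounded} by $\word{s}$ --- which is in fact how the array is used later in the paper (Proposition~\ref{prop:alpha_structure} speaks of prefixes being bounded, not strictly bounded, by $WX[\cdot,\cdot]$). You should state explicitly which of the two semantics you are proving and run the boundary case for that one; the algorithm and the $O(k\,n^3\log n)$ bound are unaffected either way.
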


\noindent
For the remainder of this paper, we can assume that the array $WX$ has been precomputed for every $\wordcommand{s} \sqsubseteq \wordcommand{v},x \in \Sigma$.
Note that in our case $k = 2$, therefore the process of computing $WX$ requires only $O(n^3 \cdot \log(n))$ time.

\section{Ranking}
\label{sec:ranking}

In this section we present our ranking algorithm.
For the remainder of this section, we assume that we are ranking the word $\word{w}$ that is the canonical representation of the binary unlabelled necklace $\necklace{w}$.
% Note that the first symbol of $\word{w}$ must be 0.
We first provide an overview of the main idea behind our ranking algorithm.

\begin{theorem}
Let $RankAN(\word{w},m)$ be the rank of the word $\word{w} \in \Sigma^n$ within the set of non-symmetric unlabelled necklaces of length $n$ that do not enclose $\word{w}$, let $RankSN(\word{w},m)$ be the rank of $\word{w}$ within the set of symmetric necklaces of length $m$ and let $RankEN(\word{w},m)$ be the rank of $\word{w}$ within the set of necklaces of length $m$ that enclose $\word{w}$.
The rank of any necklace $\necklace{w}$ represented by the word $\word{w}$ within the set of binary unlabelled necklaces of length $m$ is given by $RankAN(\word{w},m) + RankSN(\word{w},m) + RankEN(\word{w},m)$.
Further the rank can be found in $O(n^6 \log^2 n)$ time for any $m \leq n$.
\end{theorem}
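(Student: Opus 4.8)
The plan is to separate the statement into two essentially independent claims: the additive identity $\mathrm{rank}(\necklace{w}) = RankAN(\word{w},m) + RankSN(\word{w},m) + RankEN(\word{w},m)$, and the running-time bound. For the identity I would argue by a direct classification of the unlabelled necklaces that lie below $\word{w}$. Recall that a binary unlabelled necklace $\necklace{u}$ corresponds to a single labelled necklace exactly when it is symmetric ($\necklace{u} = S(\necklace{u})$), and otherwise to an unordered pair of distinct labelled necklaces, with $\Angle{\necklace{u}} = \min(\Angle{\necklace{u}}, \Angle{S(\necklace{u})})$ where $\Angle{S(\necklace{u})}$ denotes the canonical labelled representation of the complementary necklace. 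Fixing the canonical word $\word{w}$ of the necklace being ranked, the quantity we want is the number of length-$m$ unlabelled necklaces $\necklace{u}$ with $\Angle{\necklace{u}} < \word{w}$, and I would partition this set into (i) symmetric $\necklace{u}$ with $\Angle{\necklace{u}} < \word{w}$; (ii) non-symmetric $\necklace{u}$ with $\Angle{\necklace{u}} < \word{w} < \Angle{S(\necklace{u})}$; and (iii) non-symmetric $\necklace{u}$ with $\Angle{\necklace{u}} < \word{w}$ and $\Angle{S(\necklace{u})} < \word{w}$.

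The next step is to verify that this is a genuine partition and that each class matches one of the three terms. The only boundary case to exclude is $\Angle{S(\necklace{u})} = \word{w}$ while $\Angle{\necklace{u}} < \word{w}$: this would force $\word{w}$ and $\Angle{\necklace{u}}$ to lie in the same unlabelled necklace with $\Angle{\necklace{u}} < \word{w}$, contradicting that $\word{w}$ is the lexicographically least word in its rotation-and-relabelling class; hence classes (ii) and (iii) are disjoint and exhaust the non-symmetric case. Class (i) is counted by $RankSN(\word{w},m)$ directly from the definition (a symmetric necklace has the same canonical word whether viewed as a labelled or unlabelled necklace). Class (ii) is precisely the set of unlabelled necklaces enclosing $\word{w}$, by the definition of ``encloses''; since every enclosing necklace automatically has $\Angle{\necklace{u}} < \word{w}$, its rank within the enclosing family equals the family's cardinality, so this class is counted by $RankEN(\word{w},m)$. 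Class (iii) is exactly the non-symmetric unlabelled necklaces that do not enclose $\word{w}$ and have canonical representation below $\word{w}$ (here one uses, again, that equality $\Angle{S(\necklace{u})} = \word{w}$ is impossible to convert ``not enclosing'' plus $\Angle{\necklace{u}}<\word{w}$ into $\Angle{S(\necklace{u})}<\word{w}$), so it is counted by $RankAN(\word{w},m)$. A short check that $\necklace{w}$ itself falls in none of the three classes (since $\Angle{\necklace{w}} = \word{w} \not< \word{w}$ and $\necklace{w}$ does not enclose $\word{w}$) finishes the identity.

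For the running-time bound I would prove three lemmas, one per term, and take the maximum, on top of the $O(n^3 \log n)$ precomputation of the bounding-subword table $WX$ guaranteed by Proposition~\ref{prop:complexity_WX} for $\word{w} \in \Sigma^n$ (which simultaneously serves all subword lengths, hence all $m \le n$). The count $RankSN$ should reduce to counting ordinary necklaces subject to the self-complementarity constraint $\necklace{u} = S(\necklace{u})$ and I expect it to be the cheapest of the three. The counts $RankAN$ and $RankEN$ are the expensive cases: one cannot rank a single word against a single family, but must control simultaneously where $\Angle{\necklace{u}}$ and where $\Angle{S(\necklace{u})}$ sit relative to $\word{w}$. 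The intended scheme is to stratify the enumeration by the length of the aperiodic (Lyndon) period, by the Parikh value (number of $1$s), and by a growing prefix of the candidate word, counting completions with the $WX$ table; each extra parameter contributes an $O(n)$ or $O(\log n)$ factor, and the product is $O(n^6 \log^2 n)$, uniformly for $m \le n$.

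The main obstacle is the enclosing count $RankEN$, together with the closely analogous $RankAN$: obtaining a polynomial bound requires a counting procedure that tracks two synchronized scans of each candidate word — one comparing the candidate against $\word{w}$ and one comparing the complemented candidate against $\word{w}$ — while respecting periodicity, and it is precisely the nesting of these scans across all period lengths and Parikh values that forces the $n^6$ factor, with the two $\log n$ factors coming from the $WX$ precomputation and from the divisor sums needed to handle periodic necklaces. The decomposition identity and the symmetric count $RankSN$ are, by comparison, routine.
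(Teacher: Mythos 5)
Your decomposition of the unlabelled necklaces below $\word{w}$ into symmetric, enclosing, and non-symmetric non-enclosing classes is exactly the partition the paper uses, and your treatment is in fact more careful than the paper's one-line observation: you explicitly rule out the boundary case $\Angle{S(\necklace{u})} = \word{w}$ via minimality of $\word{w}$ in its own unlabelled class, and you check that $\necklace{w}$ itself is not counted. That part is correct and essentially the paper's argument. Where you diverge is in the plan for the complexity bound, specifically for $RankAN$. You treat $RankAN$ as one of the two ``expensive cases,'' requiring its own two-scan dynamic program analogous to the enclosing count. The paper avoids this entirely: since every non-symmetric non-enclosing unlabelled necklace below $\word{w}$ contributes exactly \emph{two} labelled necklaces below $\word{w}$, while every symmetric or enclosing one contributes exactly one, the paper derives $RankAN(\word{w},m) = (RankN(\word{w},m) - RankSN(\word{w},m) - RankEN(\word{w},m))/2$ and obtains $RankN$ in $O(n^2)$ time from the existing Sawada--Williams necklace-ranking algorithm. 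Your direct enumeration of class (iii) is plausible in principle (the constraints are of the same shape as the enclosing case), but it is strictly more work and would need its own correctness and complexity proof, whereas the double-counting identity makes $RankAN$ free once $RankSN$ and $RankEN$ are available. The remainder of your complexity discussion is a sketch that defers to unproved lemmas, which is also what the paper's proof of this theorem does, so no gap there beyond what the paper itself leaves to its supporting sections.
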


\begin{proof}
Observe that every unlabelled necklace must be one of the above classes.
Therefore the rank of $\word{w}$ within the set of all binary unlabelled necklaces of length $m$ is given by $RankAN(\word{w},m) + RankSN(\word{w},m) + RankEN(\word{w},m)$.
% The remainder of this paper provides the key arguments behind the time complexity of this theorem.
Lemma \ref{lem:rankAN} shows that the rank of $\word{w}$ within the set of non-symmetric unlabelled necklaces of length $m$ that do not enclose $\word{w}$ can be found in $O(n^6 \log^2(n))$ time.
Theorem \ref{thm:symmetric} shows that the rank of $\word{w}$ within the set of  symmetric necklaces can be found in $O(n^6 \log^2 n)$ time.
Theorem \ref{thm:enclosing} shows that the rank of $\word{w}$ within the set of necklaces enclosing $\word{w}$ can be found in $O(n^6 \log n)$ time.
\end{proof}

\begin{lemma}
\label{lem:rankAN}
Let $RankAN(\word{w},m)$ be the rank of $\word{w}$ within the set of non-symmetric unlabelled necklaces of length $m$ that do not enclose $\word{w}$, and let $RankN(\word{w},m)$ be the rank of $\word{w}$ within the set of all necklaces of length $m$.
Then $RankAN(\word{w},m) = (RankN(\word{w},m) - RankSN(\word{w},m) - RankEN(\word{w},m))/2$.
Further, this rank can be found in $O(n^6 \log^2 n)$ time for any $m \leq n$.
\end{lemma}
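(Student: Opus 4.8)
The plan is to establish the displayed identity by a counting argument over labelled necklaces, and then to read off the running time from the three constituent ranks. I would begin with the structural observation that over the binary alphabet $\Psi(\Sigma)=\{I,S\}$, so the unlabelled necklace class of a word $\word{v}$ is exactly the union of the (labelled) necklace classes of $\word{v}$ and $S(\word{v})$, and its canonical representation is $\min(\Angle{\necklace{v}},\Angle{S(\necklace{v})})$. Hence every unlabelled necklace is either symmetric, corresponding to a single labelled necklace, or non-symmetric, corresponding to exactly two labelled necklaces with strictly different canonical representations. I would also record the key auxiliary fact that if $\necklace{u}\neq\necklace{w}$ (as unlabelled necklaces) then $\word{w}$ lies in no rotation/relabelling of $\necklace{u}$, so neither $\Angle{\necklace{u}}$ nor $\Angle{S(\necklace{u})}$ equals $\word{w}$.

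Next I would analyse $RankN(\word{w},m)$, the number of labelled necklaces of length $m$ with canonical representation smaller than $\word{w}$, by partitioning these necklaces according to the unlabelled class they belong to. A symmetric unlabelled necklace smaller than $\word{w}$ contributes its single labelled representative, for a total of $RankSN(\word{w},m)$. For a non-symmetric unlabelled necklace $\necklace{u}\neq\necklace{w}$, "not enclosing $\word{w}$" together with the auxiliary fact forces either $\Angle{\necklace{u}}<\Angle{S(\necklace{u})}<\word{w}$ or $\word{w}<\Angle{\necklace{u}}<\Angle{S(\necklace{u})}$; in the first case $\necklace{u}$ is counted by $RankAN(\word{w},m)$ and contributes $2$ to $RankN$, in the second it contributes $0$. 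An enclosing unlabelled necklace satisfies $\Angle{\necklace{u}}<\word{w}<\Angle{S(\necklace{u})}$ by definition and so contributes exactly its one smaller representative, for a total of $RankEN(\word{w},m)$. The class $\necklace{w}$ itself contributes nothing on either side, since $\Angle{\necklace{w}}=\word{w}\not<\word{w}$ and, when $\necklace{w}$ is non-symmetric, $\Angle{S(\necklace{w})}>\word{w}$. Summing the contributions gives $RankN(\word{w},m)=RankSN(\word{w},m)+2\,RankAN(\word{w},m)+RankEN(\word{w},m)$, and solving for $RankAN(\word{w},m)$ yields the claimed formula.

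For the running time I would compute the three quantities on the right-hand side and combine them with $O(1)$ arithmetic operations on integers of $O(n)$ bits. Theorem~\ref{thm:symmetric} supplies $RankSN(\word{w},m)$ in $O(n^6\log^2 n)$ time, Theorem~\ref{thm:enclosing} supplies $RankEN(\word{w},m)$ in $O(n^6\log n)$ time, and $RankN(\word{w},m)$ — the classical necklace-ranking quantity, here evaluated against a word of length $n\geq m$ using the cross-length comparison of Section~\ref{sec:prelims} — can be obtained well within this budget using known necklace-ranking algorithms together with the precomputed array $WX$ of Proposition~\ref{prop:complexity_WX}. The dominant term is therefore $O(n^6\log^2 n)$, uniformly in $m\leq n$.

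The step I expect to be the main obstacle is the case analysis in the second paragraph: making the non-symmetric, non-enclosing case watertight rests entirely on the auxiliary fact that $\word{w}$ cannot be the canonical representation of $\necklace{u}$ or of $S(\necklace{u})$ unless $\necklace{u}=\necklace{w}$, which is what promotes the weak inequalities implicit in "does not enclose" to strict ones and guarantees that each such necklace contributes either $0$ or $2$ to $RankN$, never $1$. The remainder is bookkeeping, and the complexity bound is inherited from the cited results.
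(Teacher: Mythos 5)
Your proposal is correct and follows essentially the same route as the paper: count each labelled necklace smaller than $\word{w}$ according to its unlabelled class (symmetric and enclosing classes contribute one representative each, non-symmetric non-enclosing classes contribute two or zero), derive $RankN = RankSN + 2\,RankAN + RankEN$, and inherit the time bound from Theorem~\ref{thm:symmetric}, Theorem~\ref{thm:enclosing}, and the $O(n^2)$ necklace-ranking algorithm of Sawada and Williams. The paper's own proof is a terser version of exactly this multiplicity argument; your extra care about strict inequalities and the contribution of the class of $\word{w}$ itself only fills in details the paper leaves implicit.
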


\begin{proof}
Note that any asymmetric unlabelled necklace appears exactly twice in the set of necklaces smaller than $\word{w}$.
Further, any enclosing or symmetric necklace appears exactly once in the same set.
Therefore $RankAN(\word{w},m) = \frac{RankN(\word{w},m) - RankSN(\word{w},m) - RankEN(\word{w},m)}{2}$.
As the value of $RankN(\word{w},m)$ can be found in $O(n^2)$ time using the algorithm due to Sawada and Williams \cite{Sawada2017}, the value of $RankSN(\word{w},m)$ found in $O(n^6 \log^2 n)$ time from Theorem \ref{thm:symmetric}, and the of $RankEN(\word{w},m)$ found in 
$O(n^6 \log n)$ time from Theorem \ref{thm:enclosing}, the total time complexity is $O(n^6 \log^2 n)$.
\end{proof}

% \subsection{Theoretical Tools}

% \begin{lemma}
% Let $RankAN(\word{w},m)$ be the rank of $\word{w}$ within the set of non-symmetric unlabelled necklaces of length $m$ and let $RankN(\word{w},m)$ be the rank of $\word{w}$ within the set of all necklaces of length $m$.
% The value of $RankAN(\word{w},m)$ is given by $\frac{RankN(\word{w},m) - RankSN(\word{w},m) - RankEN}{2}$.
% \end{lemma}

% \begin{proof}
% Note that any asymmetric unlabelled necklace appears exactly twice in the set of necklaces.
% Further, any enclosing or symmetric necklace appears exactly once.
% Therefore $RankAN(\word{w},m) = \frac{RankN(\word{w},m) - RankSN(\word{w},m) - RankEN(\word{w},m)}{2}$.
% \end{proof}

\section{Symmetric Necklaces}
\label{sec:symmetric}

In this section we show how to rank a word $\word{w}$ within the set of symmetric necklaces of length $m$.
Before presenting our computational tools, we first introduce the key theoretical results that form the basis for our ranking approach.
The key observation is that any symmetric necklace $\necklace{v}$ must have a period of length $2 \cdot r$ where $r$ is the smallest rotation such that $\Angle{\necklace{v}}_r = S(\Angle{\necklace{v}})$.
This is formally proven in Proposition \ref{prop:period}, and restated in Observation \ref{obs:aperiodicity} in terms of Lyndon words.

\begin{proposition}
\label{prop:period}
A necklace $\necklace{w}$ represented by the word $\word{w} \in \Sigma^n$ is symmetric if and only if there exists some $r \in [n]$ s.t. $\word{w}_i = S(\word{w}_{i + r \bmod n})$ for every $i \in [n]$.
Further, the period of $\word{w}$ equals $2 \cdot r$ where $r \in [n]$ is the smallest rotation such that $\Angle{\word{w}}_r = S(\word{w})$.
\end{proposition}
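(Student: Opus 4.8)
The plan is to prove the two assertions in turn, obtaining the characterisation of symmetry directly from the definitions and then leveraging it to pin down the period. For the characterisation, I would unfold $\necklace{w} = S(\necklace{w})$: this holds exactly when $S(\word{w})$ belongs to the rotation class of $\word{w}$, i.e.\ when $\Angle{\word{w}}_r = S(\word{w})$ for some $r \in [n]$. Reading this equality coordinate-wise gives $\word{w}_{i + r \bmod n} = S(\word{w}_i)$ for all $i$, and since $S$ is an involution this is equivalent to $\word{w}_i = S(\word{w}_{i + r \bmod n})$ for all $i$, which is the claimed condition. I would also record here that the witnessing $r$ cannot be $0$ (equivalently $n$), because over a binary alphabet no word equals its own complement; hence $r$ is a genuine nonzero shift, and the ``smallest such $r$'' appearing in the second part is well defined (given that $\necklace{w}$ is symmetric) and lies in $\{1,\dots,n-1\}$.

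For the period, fix the smallest $r$ with $\Angle{\word{w}}_r = S(\word{w})$. Applying the relation twice gives $\word{w}_i = S(\word{w}_{i+r}) = S(S(\word{w}_{i + 2r})) = \word{w}_{i + 2r}$ (indices mod $n$), so rotation by $2r$ fixes $\word{w}$. Using the standard fact that the set of shifts fixing a length-$n$ word is a subgroup of $\mathbb{Z}/n\mathbb{Z}$ generated by the length $d$ of its aperiodic period, and that $d \mid n$, we get $d \mid 2r$; in particular, once $d = 2r$ is established we also obtain $2r \mid n$. To rule out $d < 2r$, write $2r = kd$ with $k \ge 2$ and split on the parity of $k$. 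If $k$ is even, then $r$ is a multiple of $d$, so rotation by $r$ already fixes $\word{w}$, contradicting $\Angle{\word{w}}_r = S(\word{w}) \neq \word{w}$. If $k$ is odd (hence $k \ge 3$), then since $2r = kd$ is even with $k$ odd, $d$ is even; putting $e = d/2$ one checks $ke \equiv e \pmod{d}$, so $\Angle{\word{w}}_r = \Angle{\word{w}}_{ke} = \Angle{\word{w}}_{e} = S(\word{w})$ with $1 \le e < r$, contradicting the minimality of $r$. Hence $d = 2r$.

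The only genuinely delicate point I anticipate is the bookkeeping about cyclic periods, namely that the shifts fixing $\word{w}$ form a cyclic subgroup of $\mathbb{Z}/n\mathbb{Z}$ whose generator equals the length of the aperiodic period of $\word{w}$ and divides $n$; the remainder reduces to elementary divisibility arithmetic and the parity case analysis above. Everything else is a direct consequence of the definitions of symmetry, of rotation, and of the swap map $S$ being an involution.
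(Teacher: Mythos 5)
Your proof is correct. The first part matches the paper's: both unfold the definition of symmetry into the existence of a rotation $r$ with $\Angle{\word{w}}_r = S(\word{w})$ and use that $S$ is an involution (your explicit remark that the witnessing $r$ cannot be the trivial shift, since no binary word equals its own complement, is a point the paper leaves implicit). For the second part, both arguments begin the same way --- applying the relation twice shows that rotation by $2r$ fixes $\word{w}$ --- but the exclusion of smaller periods is organised differently. The paper first argues the period is at most $2r$ and then runs a three-way case analysis on the magnitude of the period $p$ relative to $r$: for $p < r$ it shifts the complementation relation by $p$ to produce a smaller witnessing rotation $r - p$; $p = r$ is excluded directly; and for $r < p < 2r$ it shows $2r - p$ is again a period, reducing to the first case. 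You instead invoke the subgroup structure of the set of fixing shifts to get $d \mid 2r$ for the minimal period length $d$, and split on the parity of $k = 2r/d$: even $k$ forces $\word{w} = S(\word{w})$, which is impossible, and odd $k$ produces the smaller witness $e = d/2 < r$. Your route is somewhat cleaner --- it sidesteps the paper's rather informal handling of the $r < p < 2r$ case, makes the divisibility bookkeeping explicit, and yields $2r \mid n$ as a byproduct --- at the cost of importing the standard fact that the stabiliser of a word under rotation is the cyclic subgroup generated by its period length; the paper's version is more elementary in that it only manipulates indices directly.
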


\begin{proof}
% Let $\necklace{w}$ be a reflection necklace.
As $\necklace{w}$ is symmetric, $S(\word{w})$ must belong to the necklace class $\necklace{w}$.
Therefore, there must be some rotation $r$ such that $\Angle{\word{w}}_r = S(\word{w})$.
We now claim that $r \leq \frac{n}{2}$.
Assume for the sake of contradiction that $r > \frac{n}{2}$.
Then $\word{w}_i = S(\word{w}_{i + r \bmod{n}}) = \word{w}_{i + 2r \bmod{n}} = \hdots = \word{w}_{i + 2 \cdot k \cdot r \bmod{n}} = S(\word{w}_{i + (2\cdot k + 1)r \bmod{n}})$.
As $r > \frac{n}{2}$ this sequence must imply that either $\word{w}_i = S(\word{w}_i)$, an obvious contradiction, or that there exists some smaller value $p = GCD(n,r) \leq \frac{n}{2}$ such that $\word{w}_i = S(\word{w}_{i + p \bmod n})$.
Further, $\word{w}$ must have a period of at most $2 \cdot r$.

Assume now that $r$ is the smallest rotation such that $\Angle{\word{w}}_r = S(\word{w})$ and for the sake of contradiction further assume that the period of $\word{w}$ is $p < r$.
Then, as $\word{w}_i = \word{w}_{i + p \bmod n}$ for every $i \in [n]$, $\word{w}_{i + r \bmod n} = \word{w}_{i + r - p \bmod n}$, hence $\word{w}_i = S(\word{w}_{i + r - p \bmod n})$, contradicting the initial assumption.
The period can not be equal to the value of $r$ as by definition $\word{w}_{i} = S(\word{w}_{i + r \bmod n})$.
Assume now that the period $p$ of $\word{w}$ is between $r$ and $2 \cdot r$.
% Let $t = \gcd(2\cdot r, p)$.
% We claim if $\word{w}$ is periodic in $p$ and $2 \cdot r$, it must also be periodic in $t$.
As $\word{w}_i = \word{w}_{i + c \cdot p + 2k \cdot r \bmod n}$ for every $c, k\in \mathbb{N}$ and $i \in [n]$.
Further both $r$ and $p$ must be less than $\frac{n}{2}$.
Therefore $\word{w}_i = \word{w}_{i + ((n/p) - 1)p + 2\cdot r \bmod n} = \word{i + 2 \cdot r - p \bmod n}$ and hence $\word{w}$ is periodic in $2 \cdot r - p$.
As $p > r, 2 \cdot r - p < r$, however as no such period can exist, this leads to a contradiction.
Therefore, $2 \cdot r$ is the smallest period of $\word{w}$.
% As $\word{w}_i = \word{w}_{i + p \bmod n} = \word{w}_{i + 2 \cdot r \bmod n} = \word{w}_{i + 2 \cdot p \bmod n}$, 
\end{proof}

% This characterisation of symmetric necklaces can be used to count the number of words belonging to a symmetric unlabelled necklace class smaller than $\word{w}$ as follows.

\begin{lemma}
\label{lem:RA_to_RB}
Let $\mathbf{RA}(\word{w},m,S,r)$ contain the set of words belonging to an symmetric necklace smaller than $\word{w}$ such that $\word{v}_i = S(\word{v}_{i + r \bmod{m}})$ for every $\word{v} \in \mathbf{RA}(\word{w},m,S,r)$.
Further let $\mathbf{RB}(\word{w},m,S,r) \subseteq \mathbf{RA}(\word{w},m,S,r)$ contain the set of words belonging to an symmetric Lyndon word smaller than $\word{w}$ such that $r$ is the smallest value for which $\word{v}_i = S(\word{v}_{i + r \bmod{m}})$ for every $\word{v} \in \mathbf{RB}(\word{w},m,S,r)$.
The size of $\mathbf{RB}(\word{w},m,S,r)$ is given by:

$$
|\mathbf{RB}(\word{w},m,S,r)| = \sum\limits_{p | r} \mu\left(\frac{m}{p}\right) |\mathbf{RA}(\word{w},m,S,p)|
$$
% 
% \noindent
% Where $\mu(x)$ is the M\"obius function.
\end{lemma}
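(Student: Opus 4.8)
The plan is to recognize the stated identity as an instance of Möbius inversion over the divisor lattice, applied to the "period" parameter $r$ of a symmetric necklace. First I would set up the correspondence carefully: by Proposition \ref{prop:period}, every symmetric necklace $\necklace{v}$ has a well-defined smallest rotation $r$ with $\langle\necklace{v}\rangle_r = S(\langle\necklace{v}\rangle)$, and its period is exactly $2r$; conversely, if $\word{v}_i = S(\word{v}_{i+p \bmod m})$ holds for some $p$, then it holds for $r = \gcd$-type combinations, so the set of valid $p$ for a fixed $\word{v}$ is closed under taking divisors (of $m$) in the appropriate sense. The key structural claim to isolate and prove is: a word $\word{v}$ lies in $\mathbf{RA}(\word{w},m,S,p)$ if and only if its underlying symmetric necklace has minimal parameter $r$ dividing $p$ (equivalently $r \mid p$ and $p \mid m$). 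This lets me write $\mathbf{RA}(\word{w},m,S,p)$ as the disjoint union $\bigcup_{r \mid p} \mathbf{RB}(\word{w},m,S,r)$, hence $|\mathbf{RA}(\word{w},m,S,p)| = \sum_{r \mid p} |\mathbf{RB}(\word{w},m,S,r)|$.

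Once that union decomposition is established, the lemma follows by standard Möbius inversion: from $f(p) = \sum_{r \mid p} g(r)$ we recover $g(r) = \sum_{p \mid r} \mu(r/p) f(p)$, which is exactly the claimed formula with $f = |\mathbf{RA}(\word{w},m,S,\cdot)|$ and $g = |\mathbf{RB}(\word{w},m,S,\cdot)|$. I note that the stated formula writes $\mu(m/p)$ rather than $\mu(r/p)$; I would either reconcile this by observing that in the intended application $r$ plays the role of the full length (so $m$ and $r$ coincide in the relevant invocation, the symmetric Lyndon words of "length" $r$ living inside words of length $m = 2r$ or similar), or adjust the index so the Möbius argument is $r/p$ as the inversion demands. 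Clarifying precisely which quantity is being inverted — and over which poset — is the first thing to pin down.

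The main obstacle I anticipate is not the inversion itself but justifying the disjoint-union / membership equivalence rigorously, namely showing that the "$p$ works for $\word{v}$" relation is exactly "the minimal $r$ for $\word{v}$ divides $p$." The forward direction (minimal $r$ implies $r \mid p$ for any working $p$) requires an argument like the one in Proposition \ref{prop:period}: if both $\word{v}_i = S(\word{v}_{i+r})$ and $\word{v}_i = S(\word{v}_{i+p})$ hold, then $\word{v}_i = \word{v}_{i + |p-r|}$, so $\word{v}$ has an ordinary period dividing $|p-r|$, and iterating with the Euclidean algorithm forces $\gcd$-type behavior that, combined with minimality of $r$, yields $r \mid p$. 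The reverse direction (if $r \mid p$ then $p$ works) is a short computation: $\word{v}_i = S(\word{v}_{i+r}) = \word{v}_{i+2r} = \cdots$, and since $p/r$ is an integer one tracks the parity of $p/r$ to conclude $\word{v}_i = S(\word{v}_{i+p})$ when $p/r$ is odd — here I would need to be careful that the decomposition only ranges over those $p$ with $p/r$ odd, matching the "odd $d \mid n$" phenomenon already visible in the Gilbert–Riordan counting formulas, and that the Möbius factor is supported accordingly. Handling this parity restriction correctly within the divisor sum is the delicate point; everything else is bookkeeping.
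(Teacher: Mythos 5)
Your proposal matches the paper's own proof of this lemma: the paper likewise decomposes $\mathbf{RA}(\word{w},m,S,r)$ as a disjoint union of the sets $\mathbf{RB}(\word{w},m,S,d)$ over divisors $d$ of $r$ and then applies M\"obius inversion. Your concern about the index of the M\"obius factor is well-founded --- standard inversion of $|\mathbf{RA}(\word{w},m,S,r)| = \sum_{d \mid r}|\mathbf{RB}(\word{w},m,S,d)|$ yields $\mu(r/p)$ rather than the $\mu(m/p)$ appearing in the statement, and the paper's one-line proof neither resolves this discrepancy nor addresses the parity restriction you identify, so your more careful treatment is, if anything, a strengthening of the same argument.
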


\begin{proof}
Observe that every word in $\mathbf{RA}(\word{w},m,S,r)$ must have a unique period which is a factor of $2 \cdot r$.
Therefore, the size of $\mathbf{RA}(\word{w},m,S,r)$ can be expressed as $\sum\limits_{d | r} |\mathbf{RB}(\word{w},m,S,r)|$.
Applying the M\"{o}bius inversion formula to this equation gives $|\mathbf{RB}(\word{w},m,S,r)| = \sum\limits_{p | r} \mu\left(\frac{m}{p}\right) |\mathbf{RA}(\word{w},m,S,p)|$.
\end{proof}

\begin{observation}
\label{obs:aperiodicity}
Observe that any symmetric Lyndon word $\necklace{v}$ must have length $2 \cdot r$, where $r$ is the smallest rotation such that $\Angle{\word{v}}_r = S(\Angle{\word{v}})$.
\end{observation}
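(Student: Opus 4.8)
The plan is to obtain this immediately from Proposition~\ref{prop:period} together with the defining property of Lyndon words. A symmetric unlabelled Lyndon word $\necklace{v}$ is, by definition, a Lyndon word that is also a symmetric necklace; in particular its canonical representation $\Angle{\necklace{v}}$ is aperiodic and satisfies $\necklace{v} = S(\necklace{v})$. Since $\necklace{v}$ is symmetric, Proposition~\ref{prop:period} applies to $\word{v} = \Angle{\necklace{v}}$: it guarantees that there is a smallest rotation $r$ with $\Angle{\word{v}}_r = S(\Angle{\word{v}})$, and that the period of $\word{v}$ is exactly $2 \cdot r$. Note that such an $r$ exists and is nonzero, because $S(\word{v}) \neq \word{v}$ for every binary word $\word{v}$ (the swap $S$ changes every symbol), so $S(\word{v})$ is a genuine rotation of $\word{v}$ distinct from $\word{v}$ itself.

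To finish I would invoke aperiodicity: the period of an aperiodic word equals its own length, so the period of $\word{v}$ is $|\word{v}|$. Combining this with the conclusion of Proposition~\ref{prop:period} gives $|\word{v}| = 2 \cdot r$, which is the claim; it also records that $r = |\word{v}|/2$ and hence that the length of any symmetric Lyndon word is even. I do not anticipate any real obstacle here, since the observation is essentially a rephrasing of Proposition~\ref{prop:period} in the aperiodic case. The only point worth a line of justification is that the rotation $r$ named in this observation is literally the same object produced by Proposition~\ref{prop:period}, namely the least $r$ with $\Angle{\word{v}}_r = S(\Angle{\word{v}})$, which is apparent from the matching phrasing of the two statements.
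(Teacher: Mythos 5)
Your proposal is correct and follows exactly the route the paper intends: the paper gives no explicit proof of this observation, introducing it as a restatement of Proposition~\ref{prop:period} "in terms of Lyndon words," i.e.\ the period of a symmetric word is $2\cdot r$ and a Lyndon word's period equals its length. Your additional remark that $r$ exists and is nonzero (since $S$ flips every symbol of a nonempty binary word) is a harmless and reasonable extra detail.
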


\begin{lemma}
\label{lem:rank_sl_from_RB}
Let $RankSL(\word{w},2\cdot r)$ be the rank of $\word{w}$ within the set of symmetric Lyndon words of length $2 \cdot r$.
The value of $RankSL(\word{w},r)$ is given by $\frac{|\mathbf{RB}(\word{w},2 \cdot r,S,r)|}{2 \cdot r}$.
\end{lemma}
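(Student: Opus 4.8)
The plan is to argue that $\mathbf{RB}(\word{w}, 2\cdot r, S, r)$ is exactly the disjoint union, over all symmetric Lyndon words of length $2\cdot r$ whose canonical representation is smaller than $\word{w}$, of the rotation orbits of those words, and that each such orbit has exactly $2\cdot r$ elements. Dividing $|\mathbf{RB}(\word{w}, 2\cdot r, S, r)|$ by $2\cdot r$ then counts the symmetric Lyndon words with canonical representation smaller than $\word{w}$, which is by definition $RankSL(\word{w}, 2\cdot r)$.

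First I would record two elementary facts. For a word $\word{v} \in \Sigma^m$ and a shift $r$, the condition that $\word{v}_i = S(\word{v}_{i + r \bmod m})$ for every $i$ is equivalent to $S(\word{v}) = \Angle{\word{v}}_r$, since $S$ is an involution; moreover both this condition and the property of $r$ being the smallest shift satisfying it are invariant under rotation, since rotation merely shifts the indices cyclically. Next, by Proposition~\ref{prop:period} the period of a symmetric word equals twice its smallest witnessing shift, so a symmetric \emph{Lyndon} word, being aperiodic, has length exactly $2\cdot r$ where $r$ is its smallest witnessing shift; this is Observation~\ref{obs:aperiodicity}. Combining these: if $\necklace{v}$ is a symmetric Lyndon word of length $2\cdot r$ with $\Angle{\necklace{v}} < \word{w}$, then every one of its rotations belongs to a symmetric Lyndon word smaller than $\word{w}$ and has $r$ as its smallest witnessing shift, hence lies in $\mathbf{RB}(\word{w}, 2\cdot r, S, r)$; conversely, every word of $\mathbf{RB}(\word{w}, 2\cdot r, S, r)$ belongs, by definition, to a symmetric Lyndon word smaller than $\word{w}$ whose smallest witnessing shift is $r$, and by the above such a necklace has length exactly $2\cdot r$, so the word is one of its rotations.

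It remains to count. A Lyndon word of length $2\cdot r$ is aperiodic, so its rotation orbit consists of exactly $2\cdot r$ distinct words, and orbits of distinct necklaces are disjoint. Therefore $|\mathbf{RB}(\word{w}, 2\cdot r, S, r)| = 2\cdot r \cdot N$, where $N$ is the number of symmetric Lyndon words of length $2\cdot r$ with canonical representation smaller than $\word{w}$, i.e. $N = RankSL(\word{w}, 2\cdot r)$; rearranging gives the claim. The step I expect to be the main obstacle is the bookkeeping in the middle paragraph: making sure that the minimality clause in the definition of $\mathbf{RB}$ forces the underlying necklace to have length exactly $2\cdot r$ (so that no symmetric Lyndon word is double-counted across different values of $r$), and that ``smaller than $\word{w}$'' for a necklace, as used throughout the ranking section, consistently means its canonical representation is smaller than $\word{w}$. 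Both points are handled by Proposition~\ref{prop:period} and Observation~\ref{obs:aperiodicity} together with the rotation-invariance noted above; the remainder is the standard orbit-counting argument for aperiodic cyclic words.
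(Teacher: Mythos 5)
Your proposal is correct and follows essentially the same route as the paper's own proof: both arguments observe that each symmetric Lyndon word smaller than $\word{w}$ contributes exactly $2\cdot r$ distinct rotations to $\mathbf{RB}(\word{w},2\cdot r,S,r)$ (via Observation~\ref{obs:aperiodicity} and aperiodicity), so dividing by $2\cdot r$ yields the rank. Your version simply spells out the rotation-invariance and disjoint-orbit bookkeeping that the paper leaves implicit.
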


\begin{proof}
Observe that any symmetric Lyndon word has exactly $2 \cdot r$ unique translations.
Further, as any word in $\mathbf{RB}(\word{w},2 \cdot r,S,r)$ must correspond to an aperiodic word, following Observation \ref{obs:aperiodicity}, the size of $\mathbf{RB}(\word{w},2 \cdot r,S,r)$ can be used to find $RankSL(\word{w},2 \dot r)$ by dividing the cardinality of $\mathbf{RB}(\word{w},2 \cdot r,S,r)$ by $2 \cdot r$.
\end{proof}

\begin{lemma}
\label{lem:rank_sn_from_sl}
Let $RankSN(\word{w},m,r)$ be the rank of $\word{w}$ within the set of symmetric necklaces of length $m$ such that for each such necklace $\necklace{v}$, $r$ is the smallest rotation such that $\Angle{\necklace{v}}_r = S(\Angle{\necklace{v}})$.
The value of $RankSN(\word{w},m,r)$ is given by $\sum\limits_{d | 2r} RankSL(\word{w},d)$.
\end{lemma}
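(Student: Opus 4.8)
The plan is to relate the rank of $\word{w}$ among symmetric necklaces of length $m$ whose minimal "anti-rotation" is $r$ to the ranks among symmetric Lyndon words of the divisor lengths of $2r$. The guiding principle is the standard necklace-counting identity: every symmetric necklace $\necklace{v}$ of length $m$ with minimal $r$ satisfying $\Angle{\necklace{v}}_r = S(\Angle{\necklace{v}})$ has period $2r$ by Proposition \ref{prop:period}, hence its period is a symmetric Lyndon word of some length $d \mid 2r$. Conversely, each symmetric Lyndon word of length $d \mid 2r$ gives rise, by taking the appropriate power, to exactly one symmetric necklace of length $m$ — provided $d \mid m$, which holds since $2r \mid m$ (as $\word{w}_i = S(\word{w}_{i+r})$ forces the period $2r$ to divide $m$) and $d \mid 2r$. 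So the map "take the period" is a bijection between symmetric necklaces of length $m$ with parameter $r$ and symmetric Lyndon words of lengths $d \mid 2r$.

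First I would argue that this bijection is \emph{rank-preserving} in the relevant sense: a symmetric necklace $\necklace{v}$ of length $m$ is smaller than $\word{w}$ (in the length-normalised order of the Preliminaries, comparing $\word{v}^{|\word{w}|}$ with $\word{w}^{|\word{v}|}$) if and only if its period $\word{u}$, as a word of length $d$, is smaller than $\word{w}$ in the same sense. This is immediate because $\Angle{\necklace{v}} = \Angle{\necklace{u}}^{m/d}$, so $\word{v}$ and $\word{u}$ are equal as "fractional words", and the comparison against $\word{w}$ by the power rule depends only on this fractional word (the tie-break on length never triggers against a fixed $\word{w}$ of length $n$ in the way that matters here, or is handled consistently). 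Hence the number of symmetric necklaces of length $m$ with parameter $r$ that are smaller than $\word{w}$ equals the total number of symmetric Lyndon words of lengths $d \mid 2r$ that are smaller than $\word{w}$, which is exactly $\sum_{d \mid 2r} RankSL(\word{w},d)$.

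Then I would assemble the steps in order: (1) invoke Proposition \ref{prop:period} to pin down that the period of any such $\necklace{v}$ is $2r$; (2) establish the period-taking bijection onto $\bigcup_{d \mid 2r}\{\text{symmetric Lyndon words of length }d\}$, checking that $d \mid 2r \mid m$ so the power $\word{u}^{m/d}$ is well-defined and symmetric with the same minimal parameter dividing $r$; (3) verify the order-comparison against $\word{w}$ is preserved under this bijection; (4) conclude $RankSN(\word{w},m,r) = \sum_{d \mid 2r} RankSL(\word{w},d)$ by summing the contributions. The value $RankSL(\word{w},d)$ is available from Lemma \ref{lem:rank_sl_from_RB} (for $d = 2r'$), and for odd divisors $d$ of $2r$ one notes there are no symmetric Lyndon words of odd length, so those terms vanish and the sum is effectively over even $d \mid 2r$.

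The main obstacle I expect is step (3): making the length-normalised comparison rigorous. One has to be careful that when $\word{v}$ is a proper power $\word{u}^{m/d}$, the clause "$\word{v}^{|\word{w}|} = \word{w}^{|\word{v}|}$ and $|\word{v}| < |\word{w}|$" does not spuriously flip which of $\word{u}$ versus $\word{v}$ counts as smaller than $\word{w}$ — i.e. that $\word{u} < \word{w}$ iff $\word{v} < \word{w}$ \emph{as elements being ranked against the fixed canonical representative $\word{w}$}. The cleanest way around this is to observe that both $\word{u}$ and $\word{v}$ have the same infinite periodic extension, so their comparison with $\word{w}^\infty$ agrees except possibly in the degenerate case where $\word{w}$ itself is a power of the same period, which can be treated separately (and contributes at most a single off-by-one that is absorbed by the canonicality of $\word{w}$). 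Everything else is routine divisor bookkeeping.
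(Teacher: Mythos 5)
Your proposal is correct and follows essentially the same route as the paper: decompose the symmetric necklaces counted by $RankSN(\word{w},m,r)$ by their period, observe that this period divides $2r$ and is a symmetric Lyndon word, and sum $RankSL(\word{w},d)$ over the divisors $d$ of $2r$. The paper states this in two lines by analogy with Lemma~\ref{lem:RA_to_RB}; your additional care about the period map preserving the comparison against $\word{w}$ and about odd divisors contributing nothing only makes explicit what the paper leaves implicit.
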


\begin{proof}
Following the same arguments as in Lemma \ref{lem:RA_to_RB}, observe that every necklace counted by $RankSN(\word{w},m,r)$ must have a period that is a factor of $2\cdot r$.
Therefore, the value of $RankSN(\word{w},m,r)$ is given by $\sum\limits_{d | 2r} RankSL(\word{w},d)$.
\end{proof}

\begin{lemma}
\label{lem:rank_sn_general}
Let $RankSN(\word{w},m)$ be the rank of $\word{w}$ within the set of symmetric necklaces of length $m$ and let $RankSN(\word{w},m,r)$ be the rank of $\word{w}$ within the set of symmetric necklaces of length $m$ such that for each such necklace $\necklace{v}$, $r$ is the smallest rotation such that $\Angle{\necklace{v}}_r = S(\Angle{\necklace{v}})$.
The value of $RankSN(\word{w},m)$ is given by $\sum\limits_{r | (m/2)} RankSN(\word{w},m,r)$.
\end{lemma}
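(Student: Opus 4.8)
The plan is to show that the value $r$ appearing in the definition of $RankSN(\word{w},m,r)$ induces a partition of the set of symmetric necklaces of length $m$ that are smaller than $\word{w}$, so that summing the per-class ranks recovers the total rank.

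First I would invoke Proposition \ref{prop:period}: for any symmetric necklace $\necklace{v}$ of length $m$ there is a well-defined smallest rotation $r \in [m]$ with $\Angle{\necklace{v}}_r = S(\Angle{\necklace{v}})$, and moreover the period of $\Angle{\necklace{v}}$ equals exactly $2r$. Since the period of a word always divides its length, $2r \mid m$; in particular $m$ must be even (if $m$ were odd there would be no symmetric necklaces of length $m$, and the claimed sum is then vacuous, matching $RankSN(\word{w},m) = 0$). Dividing by $2$ gives $r \mid (m/2)$. Hence every symmetric necklace of length $m$ is associated to exactly one divisor $r$ of $m/2$, namely the minimal rotation witnessing its symmetry.

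Next I would observe that this association is a genuine partition. Two symmetric necklaces with distinct minimal witnessing rotations are necessarily distinct necklaces, so the subclasses indexed by different divisors $r \mid (m/2)$ are pairwise disjoint; and by the previous paragraph every symmetric necklace lies in one of them. Restricting to necklaces smaller than $\word{w}$, the set of symmetric necklaces of length $m$ counted by $RankSN(\word{w},m)$ is therefore the disjoint union, over $r \mid (m/2)$, of the sets counted by $RankSN(\word{w},m,r)$. Counting cardinalities yields $RankSN(\word{w},m) = \sum_{r \mid (m/2)} RankSN(\word{w},m,r)$.

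The only real subtlety — and hence the main point to get right — is the appeal to Proposition \ref{prop:period}, which simultaneously guarantees that the minimal $r$ is well-defined and that $2r$ is exactly (not merely a multiple of) the period; this is what forces $r$ to range precisely over the divisors of $m/2$ rather than over some larger index set. Everything else is a routine disjoint-union count.
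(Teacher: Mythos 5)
Your proposal is correct and follows essentially the same route as the paper: partition the symmetric necklaces smaller than $\word{w}$ by the unique minimal rotation $r$ witnessing their symmetry, note that $r$ must divide $m/2$, and sum the disjoint classes. The paper's own proof is just a terser version of this; your extra step of deriving $r \mid (m/2)$ from Proposition~\ref{prop:period} (period equals $2r$, and the period divides the length) is exactly the justification the paper leaves implicit.
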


\begin{proof}
Observe that every necklace counted by $RankSN(\word{w},m)$ must have a unique translation that is the minimal translation under which it is symmetric.
Further this translation must be a factor of $\frac{m}{2}$.
Therefore $RankSN(\word{w},m) = \sum\limits_{r | (m/2)} RankSN(\word{w},m,r)$.
\end{proof}

\noindent
Following Lemmas \ref{lem:RA_to_RB}, \ref{lem:rank_sl_from_RB}, \ref{lem:rank_sn_from_sl}, and \ref{lem:rank_sn_general} the main challenge in computing $RankSN(\word{w},m)$ is computing the size of $\mathbf{RA}(\word{w},m,S,r)$.
In order to do so, $\mathbf{RA}(\word{w},m,S,r)$ is partitioned into two sets, $\alpha(\word{w},r,j)$ and $\beta(\word{w},r,j)$ where $j \in [r] $.
Let $\word{v}$ be some arbitrary word in the set $\mathbf{RA}(\word{w},m,S,r)$.
The set $\alpha(\word{w},r,j)$ contains the word $\word{v}$ if $j$ is the smallest rotation under which $\Angle{\word{v}}_j \leq \word{w}$.
The set $\beta(\word{w},r,j)$ contains $\word{v}$ if $j$ is the smallest rotation under which $\Angle{\word{v}}_j \leq \word{w}$ and $\Angle{\word{v}}_t > \word{w}$ for every $t \in [r + 1, 2\cdot r]$.
Note that by this definition, $\beta(\word{w},r,j) \subseteq \alpha(\word{w},r,j)$.

% The set $\alpha(\word{w},r,j)$ contains the word $\word{v} \in \mathbf{RA}(\word{w},m,S,r)$ if $j$ is the smallest rotation under which $\Angle{\word{v}}_j \leq \word{w}$.
% The set $\beta(\word{w},r,j) \subseteq \alpha(\word{w},r,j)$ contains the word $\word{v} \in \mathbf{RA}(\word{w},m,S,r)$ if $j$ is the smallest rotation under which $\Angle{\word{v}}_j \leq \word{w}$ and $\Angle{\word{v}}_t > \word{w}$ for every $t \in [r + 1, 2\cdot r]$.
% The reason for this partition is to ensure that the size of $\mathbf{RA}(\word{w},m,S,r)$ is properly computed.

\begin{observation}
\label{obs:beta_mapping}
Given any word $\word{v} \in \mathbf{RA}(\word{w},m,S,r)$ such that $\word{v} \notin \alpha(\word{w},r,j)$ for any $j \in [r]$, there exists some $j' \in [r]$ for which $\Angle{\word{v}}_{r} \in \beta(\word{w}, r, j')$.
\end{observation}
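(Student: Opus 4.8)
The plan is to produce the witness explicitly: the required rotation of $\word{v}$ will be $\word{u} := \Angle{\word{v}}_r$, and the required index will be $j' := j^\ast - r$, where $j^\ast$ is the first rotation of $\word{v}$ that does not exceed $\word{w}$. First I would record two elementary facts. Applying the relation $\word{v}_i = S(\word{v}_{i+r \bmod m})$ twice yields $\word{v}_i = \word{v}_{i + 2r \bmod m}$ for all $i$ (the same move used in the proof of Proposition~\ref{prop:period}); hence rotating any rotation of $\word{v}$ by a further $2r$ returns the same word, so $\Angle{\word{v}}_j = \Angle{\word{v}}_{j + 2r}$ for all $j$ and every rotation of $\word{v}$ equals $\Angle{\word{v}}_j$ for some $j \in \{1, \dots, 2r\}$. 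Also $\word{u} = \Angle{\word{v}}_r$ still satisfies $\word{u}_i = S(\word{u}_{i + r \bmod m})$ and lies in the same symmetric necklace as $\word{v}$, so $\word{u} \in \mathbf{RA}(\word{w}, m, S, r)$ and it is meaningful to ask whether $\word{u} \in \beta(\word{w}, r, j')$ for some $j'$.

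Next I would translate the hypothesis. Since the necklace of $\word{v}$ is symmetric with canonical representative smaller than $\word{w}$, at least one rotation of $\word{v}$ with index in $\{1, \dots, 2r\}$ is $\le \word{w}$; let $j^\ast$ be the least such index. The assumption that $\word{v} \notin \alpha(\word{w}, r, j)$ for all $j \in [r]$ says precisely $j^\ast \notin \{1, \dots, r\}$, so $j^\ast \in \{r+1, \dots, 2r\}$, and in particular $\Angle{\word{v}}_j > \word{w}$ for every $j \in \{1, \dots, r\}$.

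Then I would verify the two clauses defining $\beta(\word{w}, r, j')$ for $\word{u} = \Angle{\word{v}}_r$ with $j' := j^\ast - r \in [r]$. Rotation composition gives $\Angle{\word{u}}_t = \Angle{\word{v}}_{t + r}$, and indices may be reduced modulo $2r$. For $1 \le t < j'$ we have $t + r \in \{r+1, \dots, j^\ast - 1\} \subseteq \{1, \dots, 2r\}$, so $\Angle{\word{u}}_t = \Angle{\word{v}}_{t+r} > \word{w}$ by minimality of $j^\ast$; and for $t = j'$ we have $t + r = j^\ast$, so $\Angle{\word{u}}_{j'} \le \word{w}$. Since also $\word{u} = \Angle{\word{v}}_r > \word{w}$ (as $r \in [r]$), the index $j'$ is the smallest rotation of $\word{u}$ that is $\le \word{w}$, which is the first clause. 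For the second, every $t \in [r+1, 2r]$ satisfies $t + r \equiv t - r \pmod{2r}$ with $t - r \in \{1, \dots, r\}$, so $\Angle{\word{u}}_t = \Angle{\word{v}}_{t-r} > \word{w}$ by the translated hypothesis. Hence $\word{u} = \Angle{\word{v}}_r \in \beta(\word{w}, r, j')$, proving the observation.

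The one delicate point — and the step I expect to need the most care — is the index bookkeeping: that the search for the first rotation $\le \word{w}$ can be restricted to the window $\{1, \dots, 2r\}$ (which is exactly what forces $j^\ast$ into the second half $\{r+1, \dots, 2r\}$ once $\word{v}$ belongs to no $\alpha$-set), and that the reductions modulo $2r$ are legitimate. Both are consequences of the period-$2r$ structure of symmetric words from Proposition~\ref{prop:period}; granting that, the rest is the short computation above.
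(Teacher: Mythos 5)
Your proof is correct and follows essentially the same route as the paper's: take the least rotation $j^\ast$ with $\Angle{\word{v}}_{j^\ast} \leq \word{w}$, note the $\alpha$-hypothesis forces $j^\ast > r$, and exhibit $\Angle{\word{v}}_r \in \beta(\word{w},r,j^\ast - r)$. You additionally verify both clauses of the definition of $\beta$ and justify the reduction of rotation indices modulo $2r$ via the period-$2r$ structure, details the paper's three-line proof leaves implicit.
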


\begin{proof}
As $\word{v} \in \mathbf{RA}(\word{w},m,S,r)$, there must be some rotation $t$ such that $\Angle{\word{v}}_{t} < \word{w}$.
As $\word{v} \notin \alpha(\word{w},r,j)$, $t$ must be greater than $r$.
Therefore, $\Angle{\word{v}}_r$ must belong to $\beta(\word{w}, r, t - r)$ confirming the observation.
\end{proof}

% Observe that for any word $\word{v} \notin \alpha(\word{w},r,j)$ for any $j \in [r]$, there exists some $j' \in [r]$ for which $\Angle{\word{v}}_{r} \in \beta(\word{w}, r, j')$.

\begin{observation}
\label{obs:alpha_mapping}
For any $\word{v} \in \beta(\word{w},r,j)$, $\Angle{\word{v}}_{r} \notin  \alpha(\word{w},r,j')$ for any $j' \in [r]$.
\end{observation}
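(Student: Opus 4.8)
The plan is to argue directly from the definitions of $\alpha$ and $\beta$, using the fact that $\word{v}$ satisfies the symmetry relation $\word{v}_i = S(\word{v}_{i + r \bmod m})$ because $\word{v} \in \mathbf{RA}(\word{w},m,S,r)$. The first step is to unpack what membership in $\beta(\word{w},r,j)$ tells us: $j \in [r]$ is the smallest rotation with $\Angle{\word{v}}_j \leq \word{w}$, and crucially $\Angle{\word{v}}_t > \word{w}$ for every $t \in [r+1, 2r]$. I would then observe that rotating $\word{v}$ by $r$ and asking about its rotations amounts to a reindexing: the rotations of $\Angle{\word{v}}_r$ by amounts $s \in [1,r]$ are exactly the rotations $\Angle{\word{v}}_{r+s}$ of $\word{v}$, i.e. the rotations by amounts in $[r+1,2r]$ (modulo the period $2r$). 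So the condition "$\Angle{\word{v}}_r \in \alpha(\word{w},r,j')$ for some $j' \in [r]$" would require some rotation of $\Angle{\word{v}}_r$ by an amount in $[1,r]$ to be $\leq \word{w}$, which translates to some $\Angle{\word{v}}_t \leq \word{w}$ with $t \in [r+1, 2r]$.

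The second step is to derive the contradiction. By the defining property of $\beta(\word{w},r,j)$ we have $\Angle{\word{v}}_t > \word{w}$ for all $t \in [r+1, 2r]$, so no rotation of $\Angle{\word{v}}_r$ by an amount in $[1,r]$ can be $\leq \word{w}$. Hence $\Angle{\word{v}}_r$ has no rotation $j' \in [r]$ with $\Angle{\word{v}}_{r}$ rotated by $j'$ being $\le \word{w}$, which is precisely the statement that $\Angle{\word{v}}_r \notin \alpha(\word{w},r,j')$ for any $j' \in [r]$. One point I would be careful about is the boundary case $t = 2r$ versus $t = 0$: rotating $\word{v}$ by $2r$ (when $2r = m$, e.g. for Lyndon words) gives back $\word{v}$ itself, and when $2r < m$ the relation $\word{v}_i = S(\word{v}_{i+r})$ forces $\word{v}$ to be periodic with period dividing $2r$, so the rotation indices should be read modulo $2r$; I would note that the argument is unaffected since we only need rotations by amounts strictly in $[r+1, 2r]$ in the reindexed list, and the $\beta$ condition already covers exactly that range.

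I expect the main obstacle to be purely bookkeeping: making the correspondence "rotations of $\Angle{\word{v}}_r$ by $[1,r]$" $\leftrightarrow$ "rotations of $\word{v}$ by $[r+1,2r]$" fully precise, including how the canonical representative $\Angle{\word{v}}_r$ (the lexicographically smallest rotation, or here just "the $r$-th shift of the canonical form") interacts with "the smallest rotation under which the word drops to $\leq \word{w}$." The cleanest way is to phrase everything in terms of the concrete word $\Angle{\word{v}}$ and its shifts $\Angle{\word{v}}_t$ rather than re-canonicalising, so that $\Angle{\word{v}}_r$ is literally the shift by $r$ and its own shift by $s$ is $\Angle{\word{v}}_{r+s}$; then the observation reduces to the single line that $[r+1,2r]$ is disjoint from "$\leq \word{w}$" by the definition of $\beta$. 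No deep idea is needed, only a careful index computation.
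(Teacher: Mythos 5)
Your proposal is correct and follows essentially the same route as the paper: both arguments reduce membership of $\Angle{\word{v}}_{r}$ in some $\alpha(\word{w},r,j')$ to the existence of a rotation $\Angle{\word{v}}_{t} \leq \word{w}$ with $t \in [r+1, 2r]$, which the defining condition of $\beta(\word{w},r,j)$ rules out. Your version merely spells out the reindexing of rotations more carefully than the paper's two-line proof does.
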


\begin{proof}
As $\word{v} \in \beta(\word{w},r,j)$, for any rotation $t > r, \Angle{\word{v}}_t > \word{w}$.
Therefore $\Angle{\word{v}}_t \notin \alpha(\word{w},r,j')$ for any $j' \in [r]$.
\end{proof}

% Similarly, For any $\word{v} \in  \beta(\word{w},r,j)$, $\Angle{\word{v}}_{r} \notin  \alpha(\word{w},r,j')$ for any $j' \in [r]$.
% Therefore the size of $\mathbf{RA}(\word{w},m,S,r)$ equals $\sum\limits_{j \in [r]} |\alpha(\word{w},r,j)| + |\beta(\word{w},r,j)|$.

\noindent
Combining Observations \ref{obs:beta_mapping} and \ref{obs:alpha_mapping}, the size of $\mathbf{RA}(\word{w},m,S,r)$ can be given in terms of the sets $\alpha(\word{w},r,j)$ and $\beta(\word{w},r,j)$ as $\sum\limits_{j \in [r]} |\alpha(\word{w},r,j)| + |\beta(\word{w},r,j)|$.
%
%
% Further, if $m$ is odd, as $2 \cdot r$ must be a period of $m$, the size of $\mathbf{RA}(\word{w},m,S,r)$ is $0$.
%
% Given $\word{v} \in \mathbf{RA}(\word{w},m,S,r)$, there are two possibilities.
% Either $S(\word{v}) \in \mathbf{RA}(\word{w},m,S,r)$ or $S(\word{v} \notin \mathbf{RA}(\word{w},m,S,r)$.
%
% In order to compute the size of $\mathbf{RA}(\word{w},m,S,r)$, the set is partitioned into a series of subsets, $\beta(\word{w},r,j)$ and $\gamma(\word{w},r,j)$ for every $j,t \in [r]$ such that:
% \begin{itemize}
%     \item For every word $\word{v} \in \beta(\word{w},r,j)$, $j$ is the smallest rotation such that $\Angle{\word{v} : S(\word{v}) }_j \leq \word{w}$ and further for any rotation $t \in [r]$, $\Angle{S(\word{v}) : \word{v}}_t > \word{w}$.
%     \item For every word $\word{v} \in \gamma(\word{w},r,j)$, $j$ is the smallest rotation such that $\Angle{\word{v} : S(\word{v}) }_j \leq \word{w}$, with no bounds on the suffix.
% \end{itemize}
%
% Note that $\alpha(\word{w},r,j) \subseteq \gamma(\word{w},r,j)$.
% We note that any word in $ $
% As there are significant computational differences between the two sets, each is computed separately.
% We start with $\alpha(\word{w},r,j)$.
% 
%
% In order to compute the size of $\mathbf{RA}(\word{w},m,S,r)$, the set is partitioned into a series of subsets, $\alpha(\word{w},r,j)$ such that for every word $\word{v} \in \alpha(\word{w},r,j)$, $j$ is the smallest rotation such that $\Angle{\word{v} : S(\word{v}) }_j \leq \word{w}$.
% 
%
The remainder of this section is laid out as follows.
We first provide a high level overview of how to compute the size of $\alpha(\word{w},r,j)$.
% A full discussion on how to do so is provided in Appendix \ref{app:symmetric:alpha}.
Then we provide a high level overview on computing the size of $\beta(\word{w},r,j)$.
% A full discussion on how to do so is provided in Appendix \ref{app:symmetric:beta}.
Finally, we state Theorem \ref{thm:symmetric}, summarising the main contribution of this section and showing that $RankSN(\word{w},m)$ can be computed in at most $O(n^6 \log^2 n)$ time.

\subsection{Computing the size of $\alpha(\word{w},r,j)$.}

We begin with a formal definition of $\alpha(\word{w},r,j)$.
Let $\alpha(\word{w},r,j) \subseteq \mathbf{RA}(\word{w},m,S,r)$ be the subset of words in $\mathbf{RA}(\word{w},m,S,r)$ such that for every word $\word{v} \in \alpha(\word{w},r,j)$, $j$ is the smallest rotation for which $\Angle{\word{v}}_j \leq \word{w}$.
Note that if $j$ is the smallest rotation such that $\Angle{\word{v}}_j \leq \word{w}$, the first $j$ symbols of $\word{v}$ must be such that for every $j' \in [j - 1], \word{v}_{[j', 2r]} > \word{w}$.
Let $\mathbf{A}(\word{w}, p, \word{B}, i, j, r) \subseteq \alpha(\word{w},r,j)$ be the set of words of length $2 \cdot r$ such that every word $\word{v} \in \mathbf{A}(\word{w}, p, \word{B},i, j,r)$:
% Let $\mathbf{A}(\word{w}, p, \word{B}, i, j, r)$ be the set of suffixes of length $2 \cdot r - i$ for the subset of words $\word{v} \in \alpha(\word{w},r,j)$ where:
\begin{enumerate}
    \item \label{cond:greater_than_before_j} $\Angle{\word{v}}_{s} > \word{w}$ for every $s \in [j - 1]$.
    \item \label{cond:less_than_at_j} $\Angle{\word{v}}_j < \word{w}$.
    \item \label{cond:symmetric} $\word{v}_{[1,r]} = S(\word{v}_{[r + 1, 2 \cdot r]})$.
    % \item \label{cond:front_bound} The first $i$ symbols of $\word{v}$ are strictly bound by $\word{B_f} \sqsubseteq_i \word{w}$.
    \item \label{cond:back_bound} The subword $\word{v}_{[r + 1, r + i]}$ is strictly bound by $\word{B} \sqsubseteq_i \word{w}$.
    \item \label{cond:front_prefix} The subword $\word{v}_{[i - p, i]} = \word{w}_{[1,p]}$.
    % \item \label{cond:back_prefix} the subword $\word{v}_{[r + i - p_b, r + i]} = \word{w}_{[1,p_b]}$.
\end{enumerate}

\noindent
% Note first that $\mathbf{A}(\word{w}, 0, 0, \emptyset, \emptyset, 0, j,r) =  \alpha(\word{w},r,j)$.
% Therefore, determining a general formula for counting the size of $\mathbf{A}(\word{w}, p_f, p_b, \word{B_f}, \word{B_b}, i, j,r)$ gives a means to compute the size of $\alpha(\word{w},j,r)$.
% Note that Conditions \ref{cond:greater_than_before_j} and \ref{cond:less_than_at_j} are necessary conditions for $\word{v}$ to be in $\alpha(\word{w}, r, j)$.
Rather than computing the size of $\mathbf{A}(\word{w}, p, \word{B}, i, j, r)$ directly, we are instead interested in the number of unique suffixes of length $r - i$ of the words in $\mathbf{A}(\word{w}, p, \word{B}, i, j, r)$.
Note that as every word in $\mathbf{A}(\word{w}, p, \word{B}, i, j, r)$ belongs to a symmetric necklace, the number of possible suffixes on length $r - i$ of words in $\mathbf{A}(\word{w}, p, \word{B}, i, j, r)$ equals the number of unique subwords of words in $\mathbf{A}(\word{w}, p, \word{B}, i, j, r)$ between position $i + 1$ and $r$.
Let $SA(\word{w}, p, \word{B}, i, j, r)$ be a function returning the number of unique suffixes of length $r - i$ of the words within $\mathbf{A}(\word{w}, p, \word{B}, i, j, r)$.
The value of $SA(\word{w}, p, \word{B}, i, j, r)$ is computed in a dynamic manner relaying on a key structural proposition regarding $\mathbf{A}(\mathbf{A}(\word{w}, p, \word{B}, i, j, r))$.

% The size of $\mathbf{A}(\word{w}, p, \word{B}, i, j, r)$ is computed in a dynamic manner, by expressing the size of $\mathbf{A}(\word{w}, p, \word{B}, i, j,r)$ in terms of the size of $\mathbf{A}(\word{w}, p',\word{B}', i + 1, j,r)$ for some $p' \in [r]$, and $\word{B}' \sqsubseteq_{i + 1} \word{w}$.
% Informally our process of computing the size of $\mathbf{A}(\word{w}, p, \word{B}, i, j,r)$ can be thought of as the process of computing the number of possible suffixes of the words in $\mathbf{A}(\word{w}, p, \word{B}, i, j,r)$ based on the symbol at position $i + 1$.

\begin{proposition}
\label{prop:alpha_structure}
Given $\word{v} \in \mathbf{A}(\word{w}, p, \word{B}, i, j, r)$, such that $\word{v}_{[i - s, i + 1]} \geq \word{w}_{[1,s]}$ for every $s \in [i]$,  $\word{v}$ also belongs to $\mathbf{A}( \word{w},p',WX[\word{B},\word{v}_{i + 1}],i + 1, j, r)$ where $p' = p + 1$ if $\word{v}_{i + 1} = \word{w}_{p + 1}$ and $0$ otherwise.
\end{proposition}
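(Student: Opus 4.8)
The plan is to verify each of the five defining conditions of $\mathbf{A}(\word{w},p',WX[\word{B},\word{v}_{i+1}],i+1,j,r)$ for the word $\word{v}$, using the hypotheses that $\word{v}\in\mathbf{A}(\word{w},p,\word{B},i,j,r)$ and that $\word{v}_{[i-s,i+1]}\geq\word{w}_{[1,s]}$ for every $s\in[i]$. Conditions \ref{cond:greater_than_before_j}, \ref{cond:less_than_at_j}, and \ref{cond:symmetric} do not mention the parameters $p$, $\word{B}$, or $i$, so they transfer verbatim from membership of $\word{v}$ in $\mathbf{A}(\word{w},p,\word{B},i,j,r)$; no work is needed there. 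The substantive content is in conditions \ref{cond:back_bound} and \ref{cond:front_prefix}, which track the two "sliding" subwords as the index advances from $i$ to $i+1$.

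For condition \ref{cond:back_bound}, I would argue as follows. By hypothesis the subword $\word{v}_{[r+1,r+i]}$ is strictly bounded by $\word{B}\sqsubseteq_i\word{w}$. Appending the single symbol $\word{v}_{i+1}$ — which, by the symmetry condition \ref{cond:symmetric}, is the symbol sitting at position $r+i+1$ under complementation, i.e. $\word{v}_{r+i+1}=S(\word{v}_{i+1})$; here one must be careful about which symbol actually extends the back subword, and I expect the correct reading is that $\word{v}_{[r+1,r+i+1]}$ is the extension of $\word{v}_{[r+1,r+i]}$ by the relevant symbol derived from $\word{v}_{i+1}$ — the defining property of the precomputed array $WX$ (Proposition \ref{prop:complexity_WX}) is exactly that $WX[\word{B},x]$ strictly bounds $\word{u}:x$ for every $\word{u}$ strictly bounded by $\word{B}$. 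Hence the extended back subword of $\word{v}$ of length $i+1$ is strictly bounded by $WX[\word{B},\word{v}_{i+1}]\sqsubseteq_{i+1}\word{w}$, which is precisely condition \ref{cond:back_bound} for the new parameters.

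For condition \ref{cond:front_prefix}, the claim is that $\word{v}_{[(i+1)-p',i+1]}=\word{w}_{[1,p']}$. Split into the two cases in the statement. If $\word{v}_{i+1}=\word{w}_{p+1}$, then $p'=p+1$; by the old condition \ref{cond:front_prefix} we already have $\word{v}_{[i-p,i]}=\word{w}_{[1,p]}$, and extending by the matching symbol $\word{v}_{i+1}=\word{w}_{p+1}$ gives $\word{v}_{[i-p,i+1]}=\word{v}_{[(i+1)-p',i+1]}=\word{w}_{[1,p+1]}=\word{w}_{[1,p']}$. If $\word{v}_{i+1}\neq\word{w}_{p+1}$, then $p'=0$ and the condition $\word{v}_{[i+1,i+1]}=\word{w}_{[1,0]}$ is vacuous (the empty word), so it holds trivially. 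The hypothesis $\word{v}_{[i-s,i+1]}\geq\word{w}_{[1,s]}$ for all $s\in[i]$ is what guarantees we have not created a rotation $\Angle{\word{v}}_t$ with $t\in[2,i+1]$ that is smaller than $\word{w}$, preserving conditions \ref{cond:greater_than_before_j}–\ref{cond:less_than_at_j}; I would fold this observation in explicitly to make the transfer of those conditions airtight rather than merely "verbatim."

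The main obstacle I anticipate is purely bookkeeping: getting the index arithmetic exactly right around the symmetry condition \ref{cond:symmetric} — specifically, confirming which symbol of $\word{v}$ is appended to the back subword $\word{v}_{[r+1,r+i]}$ when $i$ increments, and reconciling the direction in which the front prefix $\word{w}_{[1,p]}$ is aligned against $\word{v}_{[i-p,i]}$ with the convention for subwords of cyclic words fixed in the preliminaries. There is no deep idea here; the proposition is a one-step invariant-maintenance lemma feeding the dynamic program that computes $SA(\word{w},p,\word{B},i,j,r)$, and once the indexing is pinned down the three "free" conditions and the two "sliding" conditions each follow in a line or two from the definitions and from the defining property of $WX$.
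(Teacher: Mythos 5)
Your proposal is correct and follows essentially the same route as the paper's proof: a direct verification that the bounding subword updates via the defining property of $WX$ from Proposition \ref{prop:complexity_WX}, together with a case split on whether $\word{v}_{i+1}=\word{w}_{p+1}$ to determine $p'$ (using the hypothesis $\word{v}_{[i-s,i+1]}\geq\word{w}_{[1,s]}$ to rule out $\word{v}_{i+1}<\word{w}_{p+1}$ and to justify $p'=0$ in the mismatch case). The indexing subtlety you flag is real: the paper's own proof resolves it by taking the new back bound to be $WX[\word{B},S(\word{v}_{i+1})]$, since by condition \ref{cond:symmetric} the symbol appended to $\word{v}_{[r+1,r+i]}$ is $S(\word{v}_{i+1})$, which is at odds with the $WX[\word{B},\word{v}_{i+1}]$ written in the statement itself.
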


\begin{proof}
By definition, if $\word{v} \in \mathbf{A}(\word{w}, p, \word{B}, i, j, r)$ then there must exists some $p' \in [i + 1]$, and $\word{B}\sqsubseteq_i \word{w}$ such that $\word{v} \in \mathbf{A}(\word{w}, p', \word{B}', i, j, r)$.
From Proposition \ref{prop:complexity_WX}, the value of $\word{B}' = WX[\word{B},S(\word{v}_{i + 1})]$.
Further $\word{v}_{i + 1} \geq \word{w}_{p + 1}$ as otherwise $\word{v}_{[i - p, i + 1]} < \word{w}_{[1,p + 1]}$, contradicting the original assumption.
If $\word{v}_{i + 1} = \word{w}_{p + 1}$ then $p' = p + 1$ by definition.
Otherwise $p' = 0$ as $\word{v}_{[i - s,i + 1]} > \word{w}_{[1,s + 1]}$.
\end{proof}

\begin{corollary}
\label{col:alpha_cartesian}
Let $\word{v},\word{u} \in \mathbf{A}(\word{w}, p, \word{B}, i, j, r)$ be a pair of words and let $\word{v}' = \word{u}_{[1,i]} : \word{v}_{[i + 1, r]} : S(\word{v}_{[1,i]} : \word{u}_{[i + 1, r]})$.
Then $\word{v}' \in \mathbf{A}( \word{w},p',WX[\word{B},\word{v}_{i + 1}],i + 1, j, r)$ if and only if $\word{v} \in \mathbf{A}( \word{w},p',WX[\word{B},\word{v}_{i + 1}],i + 1, j, r)$.
\end{corollary}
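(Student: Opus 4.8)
The plan is to read the statement as a \emph{product-structure} claim: I want to show that whether $\word{v}'$ lies in $\mathbf{A}(\word{w},p',WX[\word{B},\word{v}_{i+1}],i+1,j,r)$ is governed only by data that $\word{v}'$ shares, coordinate for coordinate, either with $\word{u}$ or with $\word{v}$. First I would unpack $\word{v}'$: it is built from the length-$i$ prefix of $\word{u}$ and the block $\word{v}_{[i+1,r]}$ of $\word{v}$, with its second half fixed by Condition~\ref{cond:symmetric}; thus $\word{v}'$ is symmetric by construction, it agrees with $\word{u}$ on positions $[1,i]$ (and on the mirrored block), and with $\word{v}$ on positions $[i+1,r]$ (and on the mirrored block). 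This reduces the asserted equivalence to proving, for each of Conditions~\ref{cond:greater_than_before_j}, \ref{cond:less_than_at_j}, \ref{cond:back_bound} and~\ref{cond:front_prefix}, that $\word{v}'$ satisfies it iff $\word{v}$ does.

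Next I would dispose of the two prefix-local conditions, \ref{cond:back_bound} and~\ref{cond:front_prefix}. Condition~\ref{cond:back_bound} only inspects the length-$(i+1)$ second-half prefix, which for $\word{v}'$ is that of $\word{u}$ followed by one symbol: since $\word{u}\in\mathbf{A}(\word{w},p,\word{B},i,j,r)$ its length-$i$ second-half prefix is strictly bounded by $\word{B}$, and appending the extra symbol and invoking the defining property of the array $WX$ (Proposition~\ref{prop:complexity_WX}) shows the length-$(i+1)$ prefix is strictly bounded by $WX[\word{B},\cdot]$; the identical computation applies to $\word{v}$. Condition~\ref{cond:front_prefix} only inspects the first $i+1$ symbols, which for $\word{v}'$ are those of $\word{u}$ together with $\word{v}_{i+1}$; since $\word{u}$ matches $\word{w}_{[1,p]}$ ending at position $i$, the case split defining $p'$ (according as $\word{v}_{i+1}=\word{w}_{p+1}$ or not) either extends or resets the match in exactly the same way for $\word{v}'$ and for $\word{v}$. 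So both of these conditions transfer in both directions.

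The main obstacle is Conditions~\ref{cond:greater_than_before_j} and~\ref{cond:less_than_at_j}: that $\Angle{\word{v}'}_s>\word{w}$ for every $s\in[j-1]$ and $\Angle{\word{v}'}_j<\word{w}$. These are not prefix-local, since a rotation $\Angle{\word{v}'}_s$ with small $s$ reads across the ``seam'' at position $i$ where $\word{v}'$ switches from $\word{u}$'s symbols to $\word{v}$'s. My plan is, for each $s\in[j]$, to locate the coordinate where the comparison of $\Angle{\word{v}'}_s$ with $\word{w}$ is decided, and to argue by cases: if it is decided on a block where $\word{v}'$ agrees with $\word{v}$, the outcome is inherited from $\word{v}$; if it is decided before the seam on a block where $\word{v}'$ agrees with $\word{u}$, the outcome is the one guaranteed by $\word{u}\in\mathbf{A}(\word{w},p,\word{B},i,j,r)$, which also holds for $\word{v}$ by the same membership. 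The residual case --- a rotation that begins in $\word{u}$'s block and stays equal to $\word{w}$ all the way to the seam --- is where the real work lies: here I would use that $\word{u}$ and $\word{v}$ lie in the \emph{same} set $\mathbf{A}(\word{w},p,\word{B},i,j,r)$, so they carry identical seam data --- the same bounding subword $\word{B}$, the same length-$p$ overlap with $\word{w}$, and the same guarantee that no earlier rotation falls below $\word{w}$ --- and then apply the structural analysis of Proposition~\ref{prop:alpha_structure} to the common continuation to conclude that $\Angle{\word{v}'}_s$ and $\Angle{\word{v}}_s$ compare with $\word{w}$ the same way. Once this seam-straddling case is settled, both directions of the equivalence follow, with $\word{v}\in\mathbf{A}(\word{w},p,\word{B},i,j,r)$ supplying what is needed for the reverse implication.
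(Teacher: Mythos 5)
Your overall decomposition is the right one, and it is in fact more explicit than the paper, which states this corollary with no proof at all and treats it as immediate from Proposition~\ref{prop:alpha_structure}. Your handling of the prefix-local Conditions~\ref{cond:back_bound} and~\ref{cond:front_prefix} (via the $WX$ array and the extend-or-reset bookkeeping for $p$), and your silent repair of the definition of $\word{v}'$ so that its second half is the $S$-image of its first half (as Condition~\ref{cond:symmetric} requires), are both correct. It is also worth making explicit that Conditions~\ref{cond:greater_than_before_j}--\ref{cond:symmetric} are properties of the whole word and do not mention $p$, $\word{B}$ or $i$, so they hold for $\word{v}$ by hypothesis; the entire content of the corollary is therefore that $\word{v}'$ inherits the rotation conditions, exactly the part you isolate as the main obstacle.

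The gap is in how you propose to close that part. For the seam-straddling rotations you say you would ``apply the structural analysis of Proposition~\ref{prop:alpha_structure} to the common continuation,'' but that proposition only describes how the state $(p,\word{B})$ evolves when a single word is extended by one symbol; it says nothing about why two different prefixes carrying the same state produce the same comparison outcome for a rotation that ties across the seam. Two separate facts are needed and neither is supplied. First, the set of lengths $l$ with $\word{u}_{[i-l+1,i]}=\word{w}_{[1,l]}$ must coincide for $\word{u}$ and $\word{v}$; this is true because such $l$ are exactly $p$ together with the border lengths of $\word{w}_{[1,p]}$, hence determined by $p$ alone --- but only if $p$ is the \emph{maximal} match length, which Condition~\ref{cond:front_prefix} as written does not assert. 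Without this, $\word{v}'$ could admit a rotation $s<j$ that survives to the seam while no rotation of $\word{v}$ does, and Condition~\ref{cond:greater_than_before_j} could fail for $\word{v}'$ alone. Second, for a comparison that reaches the divergent block $S(\word{u}_{[1,i]})$ versus $S(\word{v}_{[1,i]})$ (or $\word{u}_{[1,i]}$ versus $\word{v}_{[1,i]}$ after wrap-around), you need that it is \emph{decided inside that block with the same outcome}; this follows from strict bounding by the common $\word{B}$, since a strictly bounded word cannot equal any length-$i$ subword of $\word{w}$ and the two blocks lie in the same gap between consecutive subwords of $\word{w}$, so they compare identically against every $\word{w}_{[l+1,l+i]}$. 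Naming ``the same $\word{B}$ and the same $p$'' is not yet an argument; these two observations are the argument, and your proof is incomplete without them.
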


\noindent
Proposition \ref{prop:alpha_structure} and Corollary \ref{col:alpha_cartesian} provide the basis for computing the value of $SA(\word{w}, p, \word{B}, i, j, r)$.
This is done by considering 4 cases based on the value of $i$ relative to the values of $j$ and $r$ which we will sketch bellow.
The key observation behind this partition is that the value of the symbol at position $i + 1$ is restricted differently depending on the values of $i,j,r$ and $p$.

\begin{lemma}
\label{lem:i_le_j}
For any $i < j$, the value of $SA(\word{w}, p, \word{B}, i, j,r)$ equals:
\begin{itemize}
    \item $SA(\word{w},p + 1, WX[\word{B}, 0], i + 1,j,r)|$ if $\word{w}_{p + 1} = 1$.
    \item $SA(\word{w},p + 1, WX[\word{B}, 1], i + 1,j,r)| + SA(\word{w},0, WX[\word{B}, 0], i + 1,j,r)$ if $\word{w}_{p + 1} = 0$.
\end{itemize}
\end{lemma}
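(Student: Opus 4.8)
The plan is to analyse what constraints are placed on the symbol at position $i+1$ when $i<j$, and then to recurse. The hypothesis $i<j$ means that we are still building the prefix $\word{v}_{[1,r]}$ at a position before the "first descent" rotation $j$, so condition \ref{cond:greater_than_before_j} is what controls us: for every $s\in[j-1]$ we need $\Angle{\word{v}}_s>\word{w}$, and in particular the rotation starting at position $s$ where $s\le i$ must already be strictly greater than $\word{w}$ — but since we have only committed the first $i+1$ symbols, the relevant comparison at position $i+1$ is governed by how $\word{v}_{[i-p,i]}=\word{w}_{[1,p]}$ (condition \ref{cond:front_prefix}) extends. I would first observe, via Proposition \ref{prop:alpha_structure}, that if $\word{v}\in\mathbf{A}(\word{w},p,\word{B},i,j,r)$ and the prefix-match bookkeeping is consistent (the $\geq$ condition in that Proposition), then appending symbol $x$ at position $i+1$ sends $\word{v}$ into $\mathbf{A}(\word{w},p',WX[\word{B},S(x)],i+1,j,r)$ where $p'=p+1$ if $x=\word{w}_{p+1}$ and $p'=0$ otherwise.

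Next I would determine which values of $x\in\{0,1\}$ are actually admissible. Because condition \ref{cond:greater_than_before_j} forbids any rotation before $j$ from dropping at or below $\word{w}$, and because $i<j$, we cannot let the prefix $\word{v}_{[i-p,i+1]}$ become lexicographically smaller than $\word{w}_{[1,p+1]}$; so we must have $\word{v}_{i+1}\geq\word{w}_{p+1}$. This immediately gives the case split on $\word{w}_{p+1}$. If $\word{w}_{p+1}=1$, then the only admissible choice is $\word{v}_{i+1}=1=\word{w}_{p+1}$, which continues the match, so $p'=p+1$ and (since $S(1)=0$) the back-bound updates to $WX[\word{B},0]$; hence $SA(\word{w},p,\word{B},i,j,r)=SA(\word{w},p+1,WX[\word{B},0],i+1,j,r)$. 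If $\word{w}_{p+1}=0$, then both $\word{v}_{i+1}=0$ (continuing the match, $p'=p+1$, back-bound $WX[\word{B},S(0)]=WX[\word{B},1]$) and $\word{v}_{i+1}=1$ (breaking the match strictly above, $p'=0$, back-bound $WX[\word{B},S(1)]=WX[\word{B},0]$) are admissible and lead to disjoint suffix sets, so the two contributions add, giving $SA(\word{w},p+1,WX[\word{B},1],i+1,j,r)+SA(\word{w},0,WX[\word{B},0],i+1,j,r)$.

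The remaining point to nail down is that the recursion genuinely partitions the set of length-$(r-i)$ suffixes without double counting and without omission: I would argue that every word in $\mathbf{A}(\word{w},p,\word{B},i,j,r)$ has a uniquely determined symbol at position $i+1$ falling into exactly one of the branches above (by the value of that symbol relative to $\word{w}_{p+1}$), that distinct branches produce distinct $(p',\word{B}')$ and hence distinct membership classes, and conversely that appending any admissible symbol to a word counted on the right-hand side yields a word still satisfying conditions \ref{cond:greater_than_before_j}–\ref{cond:front_prefix} at index $i+1$ — here conditions \ref{cond:symmetric} and \ref{cond:back_bound} are handled because we simultaneously update the mirrored position via $S(\cdot)$ and the bounding array $WX$, invoking Proposition \ref{prop:complexity_WX} and Corollary \ref{col:alpha_cartesian} to know the update is well-defined and that the suffix count factors correctly. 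The main obstacle, I expect, is the bookkeeping for the back-bound condition \ref{cond:back_bound}: one must verify that strict-bounding of $\word{v}_{[r+1,r+i]}$ by $\word{B}$ together with appending $S(\word{v}_{i+1})$ at position $r+i+1$ is exactly captured by strict-bounding by $WX[\word{B},S(\word{v}_{i+1})]$, which is precisely the content of Proposition \ref{prop:complexity_WX} but needs to be matched up carefully with the symmetric coupling $\word{v}_{[1,r]}=S(\word{v}_{[r+1,2r]})$ so that committing $\word{v}_{i+1}$ and $\word{v}_{r+i+1}=S(\word{v}_{i+1})$ at once keeps every invariant intact.
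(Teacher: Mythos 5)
Your proposal is correct and follows essentially the same route as the paper: a case split on $\word{w}_{p+1}$, using Condition \ref{cond:greater_than_before_j} to force $\word{v}_{i+1}\geq\word{w}_{p+1}$, and Proposition \ref{prop:alpha_structure} together with the $WX$ array to update $p$ and the back-bound $\word{B}$ via the complemented symbol $S(\word{v}_{i+1})$. Your treatment is in fact somewhat more careful than the paper's (which garbles the $\word{v}_{i+1}=1$ subcase and leaves the back-bound bookkeeping implicit), but it is the same argument.
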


\begin{proof}
We prove this lemma in a piece-wise manner.
Note that given the first $i$ symbols of any word $\word{v} \in \mathbf{A}(\word{w}, p, \word{B}, i, j,r)$, there are two possibilities for the $i + 1^{th}$ symbol.
Either $\word{v}_{i + 1} = 0$ or $\word{v}_{i + 1} = 1$.
In the first case, when $\word{w}_{p + 1} = 1$ then to satisfy Condition \ref{cond:greater_than_before_j}, $\word{v}_{i + 1}$ must equal $1$, as otherwise $\word{v}_{[i - p, i + 1]} < \word{w}_{[1,p + 1]}$.
As $\word{v}_{[i - p,p + 1]} = \word{w}_{[1,p + 1]}$, and $S(\word{v}_{[i - p,p + 1]})$ is bounded by $WX[\word{B},0]$, the value of $SA(\word{w}, p, \word{B}, i, j,r)$ in this case equals the value of $SA(\word{w},p + 1, WX[\word{B}, 0], i + 1,j,r)$.

If $\word{w}_{[p + 1]} = 0$ then the value of $\word{v}_{i + 1}$ can be either $0$ or $1$.
The number of suffixes of length $r - i$ of words in $\mathbf{A}(\word{w}, p, \word{B}, i, j,r)$ where the symbol at position $i + 1$ is 0 is equal to the value of $SA(\word{w}, p + 1, WX[\word{B},1], i + 1, j,r)$.
Similarly, the number of suffixes of length $r - i$ of words in $\mathbf{A}(\word{w}, p, \word{B}, i, j,r)$ where the symbol at position $i + 1$ is 1 is equal to the value of $SA(\word{w}, p, \word{B}, i, j,r)$ for which the symbol at position $i + 1$ equals $1$ is $|\mathbf{A}(\word{w}, p + 1, WX[\word{B},0], i + 1, j,r)|$.
\end{proof}

% If $i = j$ then the value of $\word{v}_{i + 1}$ depends on the value of $p$ for every $\word{v} \in \mathbf{A}(\word{w}, p, \word{B}, i, j, r)$.
% In order for $j$ to be the smallest rotation for which $\word{v}$ is smaller than $\word{w}$, the value of $p$ must be 0, as otherwise the rotation by $j - p$ would be a smaller rotation for which $\word{v}$ is smaller than $\word{w}$.
% Hence, if $p > 0, \mathbf{A}(\word{w}, p, \word{B}, i, j, r) = \emptyset$ and by extension $SA(\word{w}, p, \word{B}, i, j, r) = 0$.
% If $p = 0$ and $i = j$, then the value of $\word{v}_{i + 1}$ must be 0, as otherwise the rotation by $r$ leads to a word that is greater than $\word{w}$.
% Therefore, when $p = 0$ and $i = r$, the value of $SA(\word{w}, p, \word{B}, i, j, r)$ is exactly equal to the value $SA(\word{w}, 1, WX[\word{B},1], i + 1, j, r)$ of length $r - i - 1$.

\begin{lemma}
\label{lem:i_e_j}
Let $i = j$.
The value of $SA(\word{w}, p, \word{B}, i, j,r)$ equals:
\begin{itemize}
    \item 0 if $p > 0$.
    \item $SA(\word{w}, 1, WX[\word{B},1], i + 1, j,r)$ if $p = 0$.
\end{itemize}
\end{lemma}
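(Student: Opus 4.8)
The plan is to mirror the case analysis of Lemma~\ref{lem:i_le_j}, but now exploiting the fact that position $i=j$ is precisely the position at which the defining inequality of $\alpha(\word{w},r,j)$ must flip from ``$>\word{w}$'' (for all shifts in $[j-1]$) to ``$<\word{w}$'' (at shift $j$). First I would unpack what membership in $\mathbf{A}(\word{w},p,\word{B},i,j,r)$ with $i=j$ forces on the symbol $\word{v}_{i+1}$. By Condition~\ref{cond:front_prefix}, $\word{v}_{[i-p,i]}=\word{w}_{[1,p]}$, so the suffix of $\word{v}$ starting at position $i-p$ agrees with $\word{w}$ on its first $p$ symbols; combined with Condition~\ref{cond:greater_than_before_j}, which says $\Angle{\word{v}}_{i-p} > \word{w}$ (since $i-p \le j-1$ when $p>0$), this agreement on a prefix of length $p$ already costs us: the word $\word{v}_{[i-p,2r]}$ begins with $\word{w}_{[1,p]}$ and is strictly greater than $\word{w}$, so its $(p+1)$-st symbol $\word{v}_{i+1}$ must satisfy $\word{v}_{i+1} \ge \word{w}_{p+1}$. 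But Condition~\ref{cond:less_than_at_j} says $\Angle{\word{v}}_j = \Angle{\word{v}}_i < \word{w}$, and the subword $\word{v}_{[i,2r]}$ starts at position $i$ with $\word{v}_i = \word{w}_p$ (the last symbol of the matched prefix), which — when $p>0$ — forces a comparison that contradicts $\Angle{\word{v}}_i<\word{w}$ unless the prefix match is broken earlier. This is where I expect the delicate bookkeeping to live: showing rigorously that $p>0$ is simply inconsistent with having both Condition~\ref{cond:greater_than_before_j} hold for the shift at $i-p$ and Condition~\ref{cond:less_than_at_j} hold at $i=j$, hence $\mathbf{A}(\word{w},p,\word{B},j,j,r)=\emptyset$ and $SA(\word{w},p,\word{B},i,j,r)=0$ when $p>0$.

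For the case $p=0$, the reasoning is cleaner. Here there is no matched prefix ending at position $i$, so Condition~\ref{cond:greater_than_before_j} for the shifts up to $j-1$ has been ``paid off'' strictly, and Condition~\ref{cond:less_than_at_j} requires $\Angle{\word{v}}_j < \word{w}$, i.e. the subword $\word{v}_{[j,2r]}$ (of length $2r-j+1$) together with its wrap-around must be lexicographically smaller than $\word{w}$. Since $\word{v}_{[1,j-1]}$ has already been fixed so that every earlier shift exceeds $\word{w}$, and since $p=0$ means $\word{v}_j \ne \word{w}_j$ would have to be resolved now, I would argue that the only way to have $\Angle{\word{v}}_j<\word{w}$ while keeping $\Angle{\word{v}}_{j-1}>\word{w}$ (and recalling the canonical-shift structure) is to set $\word{v}_{i+1}=\word{v}_{j+1}=1$, matching $\word{w}_1=0$ strictly above — wait, rather: the transition to being \emph{below} $\word{w}$ at shift $j$ is witnessed by position $i+1=j+1$, where we need $\word{v}_{j+1}$ to compare against $\word{w}_1$. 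Because $\word{w}$ is the canonical (lexicographically smallest) representative we have $\word{w}_1 = 0$ whenever $\word{w}$ is not the all-ones word, so the required strict inequality $\Angle{\word{v}}_j < \word{w}$ at the first symbol after the fixed prefix forces $\word{v}_{j+1}$ to be such that $\word{w}_1 = \word{v}_{j+1}$ can still be matched — i.e. $\word{v}_{j+1}=0=\word{w}_1$, continuing the match, which updates the prefix-length parameter from $p=0$ to $p'=1$. The symmetry Condition~\ref{cond:symmetric} then dictates that $S(\word{v}_{j+1})=1$ lands in the $[r+1,2r]$ half, and by Proposition~\ref{prop:complexity_WX} the bounding subword parameter updates from $\word{B}$ to $WX[\word{B},1]$. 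Hence $SA(\word{w},0,\word{B},i,j,r) = SA(\word{w},1,WX[\word{B},1],i+1,j,r)$, which is exactly the claimed recurrence.

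The engine throughout is Proposition~\ref{prop:alpha_structure} together with Corollary~\ref{col:alpha_cartesian}: the former tells us how the tuple $(p,\word{B},i)$ evolves when we extend a word by one symbol consistently with all five conditions, and the latter guarantees that the count of valid suffixes factors cleanly, so that ``number of length-$(r-i)$ suffixes'' really does satisfy the additive/transfer recurrences we are writing down rather than merely being bounded by them. Concretely I would: (1) state and use the observation that $\word{w}_1=0$ for any canonical $\word{w}$ other than $1^n$ (and dispatch that degenerate case separately or note it contributes nothing here); (2) for $p>0$, derive the contradiction between Conditions~\ref{cond:greater_than_before_j} (at shift $i-p \le j-1$) and~\ref{cond:less_than_at_j} (at shift $i=j$) via a direct prefix-comparison argument, concluding the set is empty; (3) for $p=0$, show the symbol at $i+1$ is forced by the need to \emph{first} drop below $\word{w}$ at shift $j$, read off the parameter updates from Proposition~\ref{prop:alpha_structure}, and invoke Corollary~\ref{col:alpha_cartesian} to identify the suffix count with $SA(\word{w},1,WX[\word{B},1],i+1,j,r)$. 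The main obstacle I anticipate is step~(2): making the $p>0$ contradiction airtight requires carefully tracking the wrap-around in the cyclic subword comparisons $\Angle{\word{v}}_{i-p}$ versus $\Angle{\word{v}}_i$, since these two rotations overlap on the matched block $\word{w}_{[1,p]}$, and one must rule out the possibility that the inequality flips only due to symbols deep in the $[r+1,2r]$ half — which is where the symmetry constraint $\word{v}_{[1,r]}=S(\word{v}_{[r+1,2r]})$ of Condition~\ref{cond:symmetric} has to be brought in to close the gap.
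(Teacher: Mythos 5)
Your proposal follows essentially the same route as the paper: for $p>0$ the set is empty because prepending the matched prefix $\word{w}_{[1,p]}$ to the rotation at $j$ yields an earlier rotation that is also at most $\word{w}$, contradicting Condition~\ref{cond:greater_than_before_j}, and for $p=0$ the first symbol of $\Angle{\word{v}}_j$ is $\word{v}_{i+1}$, which must equal $0=\word{w}_1$, giving exactly the update to $SA(\word{w},1,WX[\word{B},1],i+1,j,r)$. One caution: the single-symbol comparison in your $p>0$ sketch ($\word{v}_{i+1}\ge\word{w}_{p+1}$ against $\word{v}_{i+1}\le\word{w}_1$) only yields $\word{w}_{p+1}=0$ rather than a contradiction, so the full prefix-prepending comparison you defer to is genuinely required --- which is precisely the argument the paper uses.
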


\begin{proof}
Consider some word $\word{v} \in \mathbf{A}(\word{w}, p, \word{B}, i, j,r)$.
In the first case, as $\word{v}_{[j,2 \cdot r]} : \word{v}_{[1,j-1]} \leq \word{w}$, then $\word{w}_{[1,p_f]} : \word{v}_{[j,2 \cdot r]} : \word{v}_{[1,j-1]} $ must also be smaller than $\word{w}$, contradicting the assumption that $\word{v}$ is in $\alpha(\word{w},j,r)$.
Therefore, $p$ must equal $0$.
In the second case, for $\Angle{\word{v}}_{j}$ to be less than $\word{w}$, $\word{v}_{i + 1}$ must equal $0$.
Hence the value of $SA(\word{w}, p, \word{B}, i, j,r)$ is equal to the value of $SA(\word{w}, 1, WX[\word{B},1], i + 1, j,r)$.
\end{proof}

\begin{lemma}
\label{lem:i_ge_j}
Let $i > j$ and further let $p = i - j$.
The value of $SA(\word{w}, p, \word{B}, i, j,r)$ equals:
\begin{itemize}
    \item $0$ if $i = r$ and $\word{w}_{[1,p]} : \word{B} \geq \word{w}_{[1,p + 1]}$.
    \item $1$ if $i = r$ and $\word{w}_{[1,p]} : \word{B} < \word{w}_{[1,p + 1]}$.
    \item $SA(\word{w}, p + 1, WX[\word{B},1], i + 1, j,r)$ if $i < r$ and $\word{w}_{p + 1} = 0$.
    \item $SA(\word{w}, p + 1, WX[\word{B},0], i + 1, j,r) + 2^{r - i - 1}$ if $\word{w}_{p + 1} = 1$.
\end{itemize}
\end{lemma}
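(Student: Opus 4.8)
The plan is to follow the piece-wise template of Lemmas \ref{lem:i_le_j} and \ref{lem:i_e_j}: take a word $\word{v} \in \mathbf{A}(\word{w}, p, \word{B}, i, j, r)$, determine which values its single undetermined symbol $\word{v}_{i+1}$ may take, and for each admissible value identify the resulting words, via Proposition \ref{prop:alpha_structure} and Corollary \ref{col:alpha_cartesian}, with an $\mathbf{A}(\cdot)$-set whose parameters have been updated by $p \mapsto p+1$ or $p \mapsto 0$ and $\word{B} \mapsto WX[\word{B}, S(\word{v}_{i+1})]$. The observation that drives the split is the hypothesis $p = i-j$: by Condition \ref{cond:front_prefix} it gives $\word{v}_{[j+1,i]} = \word{w}_{[1,p]}$, so the rotation $\Angle{\word{v}}_j = \word{v}_{[j+1,2r]} : \word{v}_{[1,j]}$ begins with $\word{w}_{[1,p]}$ and has $\word{v}_{i+1}$ in position $p+1$. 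Combined with Condition \ref{cond:less_than_at_j} this already forces $\word{v}_{i+1} \le \word{w}_{p+1}$, since $\word{v}_{i+1} > \word{w}_{p+1}$ would make $\Angle{\word{v}}_j > \word{w}$.

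I would then split into the stated cases. For $i < r$ and $\word{w}_{p+1} = 0$ the only admissible value is $\word{v}_{i+1} = 0 = \word{w}_{p+1}$, so the new parameters are $p+1$ and $WX[\word{B}, S(0)] = WX[\word{B},1]$, and $SA(\word{w},p,\word{B},i,j,r) = SA(\word{w}, p+1, WX[\word{B},1], i+1, j, r)$ by Corollary \ref{col:alpha_cartesian}. For $i < r$ and $\word{w}_{p+1} = 1$ there are two families: words with $\word{v}_{i+1} = 1 = \word{w}_{p+1}$, which keep tracking, giving new parameters $p+1$ and $WX[\word{B}, S(1)] = WX[\word{B},0]$ and contributing $SA(\word{w}, p+1, WX[\word{B},0], i+1, j, r)$ suffixes; and words with $\word{v}_{i+1} = 0 < \word{w}_{p+1}$, for which Condition \ref{cond:less_than_at_j} is already witnessed in position $p+1$, so the remaining $r-i-1$ symbols $\word{v}_{[i+2,r]}$ are unconstrained and contribute $2^{r-i-1}$ further suffixes; since the two families differ in position $i+1$ they are disjoint, and the total is $SA(\word{w}, p+1, WX[\word{B},0], i+1, j, r) + 2^{r-i-1}$. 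For $i = r$ the suffix $\word{v}_{[i+1,r]}$ has length $0$, so the answer is $0$ or $1$; here $\Angle{\word{v}}_j$ starts with $\word{w}_{[1,p]}$ followed by the back half $\word{v}_{[r+1,2r]}$, which by Condition \ref{cond:back_bound} is strictly bounded by $\word{B} \sqsubseteq_r \word{w}$, and I would use Definition \ref{def:bounding} (together with the fact that the length-$r$ window of $\word{w}$ against which $\word{v}_{[r+1,2r]}$ is compared is itself a subword of $\word{w}$, so it exceeds $\word{B}$ precisely when it exceeds $\word{v}_{[r+1,2r]}$) to reduce the test $\Angle{\word{v}}_j < \word{w}$ to the comparison $\word{w}_{[1,p]} : \word{B} < \word{w}_{[1,p+1]}$; thus $\mathbf{A}(\word{w},p,\word{B},r,j,r)$ is non-empty, and $SA$ equals $1$, exactly in that case.

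The step I expect to be the main obstacle is the claim behind the ``$+2^{r-i-1}$'' term: that, once $\word{v}_{i+1} = 0 < \word{w}_{p+1} = 1$ has been fixed, every one of the $2^{r-i-1}$ completions genuinely lies in $\alpha(\word{w},r,j)$, i.e.\ that the free symbols $\word{v}_{[i+2,r]}$ cannot spoil Conditions \ref{cond:greater_than_before_j} and \ref{cond:back_bound}. Condition \ref{cond:symmetric} is automatic because the back half is $S$ of the front half, and Condition \ref{cond:back_bound} is a bookkeeping label rather than a restriction; the content is that no rotation $\Angle{\word{v}}_s$ with $s \in [j-1]$ can be brought down to $\word{w}$ by the tail, because the prefix $\word{v}_{[1,j]}$ fixed by the earlier stages of the recursion — precisely what the parameter $p$ tracks in Lemmas \ref{lem:i_le_j} and \ref{lem:i_e_j} — already forces each such $\Angle{\word{v}}_s$ strictly above $\word{w}$ within the committed positions. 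I would isolate this as a separate structural claim, proved in the style of Proposition \ref{prop:period} by an overlap/synchronisation argument relating $\Angle{\word{v}}_s$ to $\Angle{\word{v}}_j$; granting it, the remainder is the routine bookkeeping about the updates to $p$ and $\word{B}$ sketched above.
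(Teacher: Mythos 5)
Your proposal is correct and follows essentially the same route as the paper's proof: use $p=i-j$ and Condition \ref{cond:front_prefix} to align $\word{v}_{i+1}$ with position $p+1$ of $\Angle{\word{v}}_j$, force $\word{v}_{i+1}\le\word{w}_{p+1}$ via Condition \ref{cond:less_than_at_j}, split on whether the prefix-match continues or drops strictly (yielding the recursive term resp.\ the $2^{r-i-1}$ free completions), and settle $i=r$ by comparing $\word{w}_{[1,p]}:\word{B}$ against $\word{w}$ using the strict-bounding property. The one point where you go beyond the paper --- justifying that the $2^{r-i-1}$ free tails cannot violate Condition \ref{cond:greater_than_before_j} --- is a genuine subtlety the paper leaves implicit; it is discharged not in this lemma but by the invariant that $p$ is the \emph{longest} matching suffix and that all longer suffixes of the committed prefix are strictly greater than the corresponding prefixes of $\word{w}$, which is exactly what the prefix-validity checks in Lemma \ref{lem:counting_alpha} enforce.
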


\begin{proof}
Consider some word $\word{v}\in \mathbf{A}(\word{w}, p, \word{B}, i, j,r)$
In the first case, as $\word{v}_{[r + 1, 2 \cdot r]} > \word{B}$, if $\word{w}_{[1,p]} : \word{B} \geq \word{w}_{[1,p_f + 1]}$, then $\Angle{\word{v}}_j > \word{w}$, contradicting Condition \ref{cond:less_than_at_j}.
In the second case, note that as $\word{B} < \word{w}_{[p + 1, r + p]}$, then by the definition of strictly bounding subwords, $\word{v}_{[r + 1,2 \cdot r]} < \word{w}_{[p + 1, r + p]}$.
Further, as $i = r$ the only possible 0 length suffix of words in $\mathbf{A}(\word{w}, p, \word{B}, i, j,r)$ is the empty word.
Therefore the value of $SA(\word{w}, p, \word{B}, i, j,r)$ is 1.

In the third case, following the same arguments as in Lemma \ref{lem:i_le_j}, the only possible value for $\word{v}_{i + 1}$ is $0$, as a value of $1$ would contradict Condition \ref{cond:less_than_at_j}.
Further, as $\word{v}_{[i - p, i + 1]} = \word{w}_{[1,p + 1]}$, the value of $SA(\word{w}, p, \word{B}, i, j,r)$ equals $SA(\word{w}, p + 1, WX[\word{B},0], i + 1, j,r)$.
In the fourth case, the value of $\word{v}_{i + 1}$ can be either $0$ or $1$.
If $\word{v}_{i + 1} = 0$ then as $\word{v}_{[i - p, i + 1]} < \word{w}_{[1,p + 1]}$, the remaining symbols in $\word{v}$ can be any arbitrary choice.
As there are $2^{r - i - 1}$ such possible suffixes, the number of unique suffixes of length $r - i$ of words in $\mathbf{A}(\word{w}, p, \word{B}, i, j,r)$ where the symbol at position $i + 1$ is $0$ is $2^{r - i - 1}$.
Following the arguments outlined above, the value of $SA(\word{w}, p, \word{B}, i, j,r)$ where $\word{v}_{i + 1} = 1$ is equal to $SA(\word{w}, p + 1, WX[\word{B},0], i + 1, j,r)$.
\end{proof}

\begin{lemma}
\label{lem:size_of_A}
The value of $SA(\word{w},p,\word{B}, i,j,r)$ can be computed in $O(n^3)$ time.
\end{lemma}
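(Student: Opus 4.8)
The plan is to observe that the recursive formulas for $SA(\word{w},p,\word{B},i,j,r)$ established in Lemmas~\ref{lem:i_le_j}, \ref{lem:i_e_j}, and~\ref{lem:i_ge_j} constitute a complete case analysis: every admissible parameter tuple falls into exactly one of the four regimes ($i<j$, $i=j$, $i>j$ with $i<r$, and $i>j$ with $i=r$), and in each regime the value of $SA$ at step $i$ is expressed as a sum of at most two values of $SA$ at step $i+1$ (plus a constant that can be computed in $O(n)$ time given the precomputed $WX$ array). This immediately suggests a bottom-up dynamic programming evaluation: fix $\word{w}$ (length $n$), $j$, and $r$, and tabulate $SA(\word{w},p,\word{B},i,j,r)$ over all relevant $p\in[0,r]$, $\word{B}\sqsubseteq_i\word{w}$, and $i\in[1,r]$, iterating $i$ downward from $r$ to the base cases.

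First I would bound the size of the table. For each fixed $i$, the subword $\word{B}$ ranges over $\sqsubseteq_i\word{w}$, of which there are at most $n$ distinct subwords of length $i$; the parameter $p$ ranges over $[0,r]\subseteq[0,n]$; however, I would note that in the $i=j$ and $i>j$ cases $p$ is pinned (to $0$, or to $i-j$ respectively), and only in the $i<j$ case is $p$ genuinely free, so the total number of states for a fixed $(i,j,r)$ is $O(n)$ when restricting to the $WX$-reachable values, and $O(n^2)$ as a crude bound over all $(p,\word{B})$ pairs. Summing over $i\in[1,r]$ gives $O(n^3)$ states for a fixed $(j,r)$ pair in the worst case, but the key point is that each state's value is computed in $O(1)$ arithmetic operations on $O(n)$-bit integers (the constants $2^{r-i-1}$ and comparisons like $\word{w}_{[1,p]}:\word{B}\geq\word{w}_{[1,p+1]}$ using the precomputed $WX$ table), hence $O(n)$ time per state, yielding $O(n^3)$ time total to fill the table and read off $SA(\word{w},p,\word{B},i,j,r)$ for the top-level call.

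The main steps in order: (i) state that $WX$ is precomputed in $O(n^3\log n)$ time by Proposition~\ref{prop:complexity_WX}, which does not dominate; (ii) argue the four lemmas are exhaustive and mutually exclusive over admissible tuples, so the recursion is well-defined; (iii) show the recursion is acyclic with all dependencies strictly increasing $i$, so a single pass with $i$ decreasing from $r$ terminates with no recomputation; (iv) bound the number of distinct reachable states at level $i$ by $O(n)$ (distinct $\word{B}\sqsubseteq_i\word{w}$ times the at-most-two choices of $p$ that arise) or $O(n^2)$ crudely, and the per-state work by $O(n)$ for the big-integer additions and the constant $2^{r-i-1}$; (v) multiply to get $O(n^3)$. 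The base cases are the $i=r$ clauses in Lemma~\ref{lem:i_ge_j} (giving $0$ or $1$) and we simply halt there.

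The part that needs the most care is step~(iv): the constant $2^{r-i-1}$ appearing in Lemma~\ref{lem:i_ge_j} is an $\Theta(n)$-bit integer, and the running values of $SA$ can be comparably large, so each addition in the recursion costs $\Theta(n)$ rather than $O(1)$; one must be careful to count this honestly to land on $O(n^3)$ rather than $O(n^2)$. A secondary subtlety is confirming that across all states at a fixed $i$ the subword argument $\word{B}$ only ever takes values that are genuine subwords of $\word{w}$ (so there really are only $O(n)$ of them) — this follows because the recursion only ever replaces $\word{B}$ by $WX[\word{B},x]$, and $WX$ by construction returns a subword of $\word{w}$, so the invariant $\word{B}\sqsubseteq\word{w}$ is preserved along every recursive branch.
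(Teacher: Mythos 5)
Your overall strategy --- evaluate the recurrences of Lemmas~\ref{lem:i_le_j}, \ref{lem:i_e_j} and \ref{lem:i_ge_j} bottom-up in decreasing $i$, starting from the $i=r$ base cases --- is exactly the paper's proof. The gap is in step~(iv), where the two halves of your accounting do not fit together. You charge $O(n)$ bit-operations per state (for additions of $n$-bit quantities such as $2^{r-i-1}$), and to still land on $O(n^3)$ you need only $O(n)$ reachable states per level, which you justify by saying that for each $\word{B}$ only ``at most two'' values of $p$ arise. That claim is unjustified, and in the regime $i<j$ there is no apparent reason for it to hold: there $p$ is genuinely free (as you yourself concede), because $p$ (the length of the longest suffix of the prefix built so far that matches a prefix of $\word{w}$) and $\word{B}$ (the subword of $\word{w}$ bounding the complemented prefix) are two independent functions of the constructed prefix, so after enough steps the reachable $(p,\word{B})$ pairs at a single level can a priori number $\Theta(n^2)$. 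Substituting the correct crude bound of $O(n^2)$ states per level into your own cost model gives $O(n)\cdot O(n^2)\cdot O(n)=O(n^4)$, not $O(n^3)$.

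The paper closes the count the other way around: $O(n^2)$ states per level ($p\in[i]$ and $\word{B}\sqsubseteq_i\word{w}$), $O(n)$ levels, and $O(1)$ per state under the unit-cost arithmetic convention standard in this line of ranking work; the $i=r$ base states each cost $O(n)$ for the comparison of $\word{w}_{[1,p]}:\word{B}$ against $\word{w}_{[1,p+1]}$, but there are only $O(n^2)$ of them, so the total remains $O(n^3)$. To repair your write-up, either adopt that cost model and drop the reachable-state claim, or actually prove an $O(n)$ bound on the reachable $(p,\word{B})$ pairs per level --- the latter would be a sharper statement than the paper's, but it requires an argument you have not supplied. Your remaining steps (precomputation of $WX$, exhaustiveness of the case analysis, acyclicity of the recursion in increasing $i$, and the invariant that $\word{B}$ stays a subword of $\word{w}$) are correct and match the paper.
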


% \begin{proof}[Sketch]
% Following the outline given above, the value of $SA(\word{w},p,\word{B}, i,j,r)$ is computed in a dynamic manner, starting with $i = r$ as a base case, and progressing in descending value of $i$.
% For each value of $i$, the value of $SA(\word{w},p,\word{B}, i,j,r)$ is computed for every $\word{B} \sqsubseteq_{i} \word{w}$, and $p \in [1,i]$ if $i \leq j$, or $p = i - j$ if $i > j$.
% For $i = r$, the value of $SA(\word{w},i - j,\word{B}, i,j,r)$ can be computed in $O(n)$ time for every $\word{B} \sqsubseteq_i \word{w}$.
% For $i < r$ the value of $SA(\word{w},p,\word{B}, i,j,r)$ can be computed in $O(1)$ time provided the value of $SA(\word{w},p',\word{B}', i + 1,j,r)$ has been precomputed for every $p' \in \{p + 1, 0\}$ and $\word{B}' \sqsubseteq_{i + 1} \word{w}$.
% As there are only $n$ values of $\word{B} \sqsubseteq_r \word{w}$ to consider in the base case, and at most $O(n^3)$ total possible value of $i,p \in [r], \word{B} \sqsubseteq_i \word{w}$, the total complexity of this process is $O(n^3)$.
% \end{proof}

\begin{proof}
We use the dynamic programming approach explicitly given in Lemma \ref{lem:i_le_j}, \ref{lem:i_e_j}, and \ref{lem:i_ge_j}.
Starting with the case where $i = r$, the value of $SA(\word{w},p,\word{B}, i,j,r)$ can be computed in $O(n)$ time by comparing $\word{w}_{[1,p]} : \word{B}$ with $\word{w}_{[1,p + 1]}$.
In the case where $i > j$ while still being smaller than $r$, the value of $SA(\word{w},p,\word{B}, i,j,r)$ can be computed in $O(1)$ time if the value of $SA(\word{w},p',\word{B}', i + 1,j,r)$ has been precomputed for every $p'\in [i], \word{B}' \sqsubseteq_i \word{w}$ using Lemma \ref{lem:i_ge_j}.
Similarly, when $i = j$ the value of $SA(\word{w},p,\word{B}, i,j,r)$ can be computed in $O(1)$ time using Lemma \ref{lem:i_e_j} provided that the value of $SA(\word{w},p',\word{B}', i + 1,j,r)$. has been precomputed for every $p'\in [i], \word{B}' \sqsubseteq_i \word{w}$.
Finally when $i < j$, the value of $SA(\word{w},p,\word{B}, i,j,r)$ can be computed in $O(1)$ time using Lemma \ref{lem:i_le_j} provided that the size of $SA(\word{w},p',\word{B}', i + 1,j,r)$. has been precomputed for every $p'\in [i], \word{B}' \sqsubseteq_i \word{w}$.

This leads to a natural dynamic programming formulation.
Starting with $i = j$, the value of $SA(\word{w},p,\word{B}, i,j,r)$ can be computed in $O(n^3)$ time for every $p \in [i]$ and $\word{B} \sqsubseteq_i \word{w}$.
Once this has been computed, the value of $SA(\word{w},p,\word{B}, i,j,r)$ can be computed in decreasing value of $i$ for every $i,p \in [r], \word{B} \sqsubseteq_i \word{w}$ in $O(1)$ time per value, for a total of $O(n^3)$ time complexity. 
\end{proof}

\begin{lemma}
\label{lem:counting_alpha}
The size of $\alpha(\word{w},j,r)$ can be computed in $O(n^4)$ time.
\end{lemma}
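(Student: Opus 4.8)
The plan is to assemble the size of $\alpha(\word{w},j,r)$ from the precomputed table of $SA$ values, using the structural results already in place. Recall that $\alpha(\word{w},r,j)$ consists of the words $\word{v}$ of length $2r$ in $\mathbf{RA}(\word{w},m,S,r)$ for which $j$ is the smallest rotation with $\Angle{\word{v}}_j \le \word{w}$; by Proposition~\ref{prop:period} and Condition~\ref{cond:symmetric} such a word is completely determined by its first half $\word{v}_{[1,r]}$, and in fact by the subword $\word{v}_{[1,r]}$ together with the requirement that rotations $\Angle{\word{v}}_s$ for $s<j$ are all $>\word{w}$. I would first argue that $|\alpha(\word{w},r,j)| = SA(\word{w},0,\varepsilon,0,j,r)$ when the empty-prefix parameters are set up so that $\mathbf{A}(\word{w},0,\varepsilon,0,j,r)$ coincides with $\alpha(\word{w},r,j)$: the parameters $p$ (length of the current match with the prefix of $\word{w}$) and $\word{B}$ (the bounding subword on the back half) both start trivially, and $i=0$ means no symbols have been fixed yet. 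Strictly, a little care is needed because $SA$ counts \emph{distinct suffixes} rather than words, but Corollary~\ref{col:alpha_cartesian} shows the back half is determined (via $S$) by the front half, so for $i=0$ the count of distinct length-$r$ suffixes equals the count of words; hence $|\alpha(\word{w},r,j)| = SA(\word{w},0,\varepsilon,0,j,r)$.

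Next I would address the time bound. By Lemma~\ref{lem:size_of_A}, the whole table $SA(\word{w},p,\word{B},i,j,r)$ for fixed $j,r$ and all $p\in[r]$, $\word{B}\sqsubseteq_i\word{w}$, $i\in[r]$ is filled in $O(n^3)$ time, since there are $O(n)$ choices of $i$, $O(n)$ choices of $p$, $O(n)$ choices of $\word{B}$ (there are at most $n$ distinct subwords of each length of $\word{w}$), with $O(1)$ work per entry after the $i=r$ base case costing $O(n)$ each. To obtain $|\alpha(\word{w},j,r)|$ we only read off one entry, so a single $(j,r)$ pair costs $O(n^3)$; ranging over all relevant $r$ (divisors of $m/2$, or more crudely all $r\le n$) and all $j\in[r]$ gives at most $O(n^2)$ pairs, for a total of $O(n^5)$ — but the statement only claims $O(n^4)$ for a \emph{single} $\alpha(\word{w},j,r)$. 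So the right reading is: for a fixed pair $(j,r)$, computing $|\alpha(\word{w},j,r)|$ requires building the $SA$ table, which is $O(n^3)$; the extra factor of $n$ in the $O(n^4)$ bound comes from the fact that $WX$ is indexed by subwords and the base case $i=r$ must be evaluated for all $O(n)$ values of $(p,\word{B})$ at cost $O(n)$ each, i.e. $O(n^2)$ for the base layer, negligible against $O(n^3)$; alternatively the $O(n^4)$ absorbs the cost of iterating $j$ over $[r]$ for a fixed $r$. I would state the bound as written and justify it by $O(n)$ choices of $j$ times the $O(n^3)$ cost of Lemma~\ref{lem:size_of_A}.

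So the concrete steps are: (1) show $\mathbf{A}(\word{w},0,\varepsilon,0,j,r) = \alpha(\word{w},r,j)$ by unwinding Conditions~\ref{cond:greater_than_before_j}--\ref{cond:front_prefix} at $i=0$; (2) show $SA(\word{w},0,\varepsilon,0,j,r) = |\alpha(\word{w},r,j)|$ using Corollary~\ref{col:alpha_cartesian} to rule out suffix collisions (the front half determines the word); (3) invoke Lemma~\ref{lem:size_of_A} to get the $O(n^3)$ table-build cost, and multiply by the $O(n)$ values of $j$ to reach $O(n^4)$ for assembling $\alpha(\word{w},j,r)$ across all $j$ for a given $r$. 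I expect step~(2) — the passage from "number of distinct suffixes" back to "number of words" — to be the main subtlety, since for intermediate $i$ the two genuinely differ and the whole point of tracking $SA$ rather than $|\mathbf{A}|$ is that distinct words can share a suffix; the argument must lean on the fact that at $i=0$ the "suffix of length $r-i$" is the entire first half, which pins down the word by Condition~\ref{cond:symmetric}. Everything else is routine bookkeeping over the dynamic program already established in Lemmas~\ref{lem:i_le_j}, \ref{lem:i_e_j}, and \ref{lem:i_ge_j}.
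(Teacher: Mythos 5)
There is a genuine gap: you assume that the single table entry $SA(\word{w},0,\emptyset,0,j,r)$ equals $|\alpha(\word{w},r,j)|$, but it does not. The $SA$ recursion is driven by the $WX$ array of Proposition~\ref{prop:complexity_WX}, which tracks \emph{strictly bounding} subwords; by Definition~\ref{def:bounding} a word that is itself a subword of $\word{w}$ is not strictly bounded by anything, so the recursion started from the empty bounding word only accounts for those $\word{v} \in \alpha(\word{w},r,j)$ for which $S(\word{v}_{[1,i]}) \not\sqsubseteq \word{w}$ (equivalently $\word{v}_{[r+1,r+i]} \not\sqsubseteq \word{w}$) for every $i \in [r]$. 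Words whose relevant half agrees with a subword of $\word{w}$ for some initial stretch and only later departs are invisible to that one entry. The paper's proof devotes most of its effort to exactly this complementary case: it enumerates all $O(n^2)$ subwords $\word{u} \sqsubseteq_{i-1} \word{w}$ together with each symbol $x$ such that $\word{u}:x \not\sqsubseteq \word{w}$, checks in $O(n)$ time (via three conditions depending on whether $i < r$, $i = r$, or $i > r$) whether this can occur as the relevant prefix of some word in $\alpha(\word{w},r,j)$, and if so adds the precomputed value $SA(\word{w},p,\word{B},i,j,r)$ for the parameters $p$ and $\word{B}$ determined by $\word{u}:x$, with Corollary~\ref{col:alpha_cartesian} justifying that each such contribution is counted exactly once. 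Without this enumeration your formula is only a lower bound on $|\alpha(\word{w},r,j)|$.

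Your time analysis is also off target: the headroom above the $O(n^3)$ of Lemma~\ref{lem:size_of_A} is not spent ``iterating $j$ over $[r]$'' (the lemma is stated for a single pair $(j,r)$); in the paper the additional work is the $O(n^2)\cdot O(n) = O(n^3)$ subword enumeration just described, which sits alongside the $O(n^3)$ table build (the paper's own proof in fact concludes $O(n^3)$, comfortably within the stated $O(n^4)$). Your step~(2), identifying distinct length-$r$ suffixes at $i=0$ with whole words via Condition~\ref{cond:symmetric}, is fine as far as it goes, but it addresses a secondary subtlety rather than the missing case.
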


\begin{proof}
From Lemma \ref{lem:size_of_A}, the value of $SA(\word{w},p,\word{B}, i,j,r)$ can be computed in $O(n^3)$ time for any value of $i,p \in [n]$ and $\word{B} \sqsubseteq_i \word{w}$.
Note that $SA(\word{w},0,\emptyset, 0,j,r)$ allows us to count the number of words $\word{v} \in \alpha(\word{w},j,r)$ where $\word{v}_{[r + 1, r + i]} \not\sqsubseteq \word{w}$ for every $i \in [r]$, or equivalently, where $S(\word{v}_{[1,i]}) \not\sqsubseteq \word{w}$.
To compute the remaining words, let $\word{u} \sqsubseteq_{i - 1} \word{w}$ and let $x \in \{0,1\}$ be a symbol such that $\word{u} : x \not\sqsubseteq \word{w}$.
Further let $\word{B} \sqsubseteq_{i} \word{w}$ be the subword of $\word{w}$ strictly bounding $\word{u} : x$ and let $p$ be the length of the longest suffix of $S(\word{u} : x)$ that is a prefix of $\word{w}$, i.e. the largest value such that $S(\word{u} : x)_{[i - p: i]} = \word{w}_{[1,p]}$.
Observe that $S(\word{u} : x)_{[1,p]}$ is the prefix of some word $\word{v} \in \alpha(\word{w},j,r)$ if and only if one of the following holds:
\begin{itemize}
    \item if $i < r$ then $(\word{u} : x)_{[i - s, i]} > \word{w}_{[1,s]}$ for every $s \in [p + 1,i]$.
    \item if $i = r$ then $p = 0$.
    \item if $i > r$ then $p = i - r$.
\end{itemize}

\noindent
As each condition can be checked in at most $O(n)$ time, and there are at most $O(n^2)$ subwords of $\word{w}$, it is possible to check for every such subword if it is a prefix of some word in $\mathbf{A}(\word{w},p,\word{B}, i,j,r)$ in $O(n^3)$ time.
Following Corollary \ref{col:alpha_cartesian}, the number of suffixes of each word in $\mathbf{A}(\word{w},p,\word{B}, i,j,r)$ is equal to the value of $SA(\word{w},p,\word{B}, i,j,r)$.
By precomputing $SA(\word{w},p,\word{B}, i,j,r)$, the number of words in $\alpha(\word{w},j,r)$ with $\word{u} : x$ as a prefix can be computed in $O(1)$ time.
Therefore the total complexity of computing the size of $\alpha(\word{w},j,r)$ is $O(n^3)$.
\end{proof}

\subsection{Computing the size of $\beta(\word{w},r,j)$.}

% The size of $\beta(\word{w},r,j)$ is computed in a similar manner to the size of $\alpha(\word{w},r,j)$.
We start by subdividing $\beta(\word{w},r,j)$ into the subsets $\mathbf{B}(\word{w}, p_f, p_b, \word{B_f}, \word{B_b}, i, j,r)$.
Let $\mathbf{B}(\word{w}, p_f, p_b, \word{B_f}, \word{B_b}, i, j,r) \subseteq \beta(\word{w},r,j)$ be the subset of $\beta(\word{w},r,j)$ containing every word $\word{v} \in \beta(\word{w},r,j)$ where $\word{v}$ satisfies:
\begin{enumerate}
    \item $\word{v}_{[1,r]} = S(\word{v}_{[r + 1, 2 \cdot r]})$.
    \item The first $i$ symbols of $\word{v}$ are strictly bound by $\word{B_f} \sqsubseteq_i \word{w}$ ($\word{B_f}$ standing for \textbf{b}ounding the \textbf{f}ront).
    \item The subword $\word{v}_{[r + 1, r + i]}$ is strictly bound by $\word{B_b} \sqsubseteq_i \word{w}$ ($\word{B_b}$ standing for \textbf{b}ounding the \textbf{b}ack).
    \item The subword $\word{v}_{[i - p_f, i]} = \word{w}_{[p_f]}$ ($p_f$ standing for the \textbf{f}ront \textbf{p}refix).
    \item the subword $\word{v}_{[r + i - p_b, r + i]} = \word{w}_{[p_b]}$ ($p_b$ standing for the \textbf{b}ack \textbf{p}refix).
\end{enumerate}

\begin{proposition}
\label{prop:beta_structure}
Given $\word{v} \in \mathbf{B}(\word{w},p_f,p_b,\word{B_f},\word{B_b},i,j,r)$, where $\word{v}_{[i - s, i + 1]} \geq \word{w}_{[1,s]}$ for every $s \in [i]$, $\word{v}$ also belongs to $\mathbf{B}(\word{w}, p_f', p_b', XW[\word{B_f},\word{v}_{i + 1}], XW[\word{B_b},S(\word{v}_{i + 1})],i,j,r)$ where $p_f' = p_f + 1$ if $\word{v}_{i + 1} = w_{p_f + 1}$ or 0 otherwise, and $p_b' = p_b + 1$ if $S(\word{v}_{i + 1}) = \word{w}_{p_b + 1}$, and 0 otherwise.
\end{proposition}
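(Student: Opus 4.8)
The plan is to mirror the structure of the proof of Proposition \ref{prop:alpha_structure}, but now tracking two bounding subwords and two prefix lengths simultaneously, since a word in $\beta(\word{w},r,j)$ must be controlled both on its front half (positions $[1,r]$) and its back half (positions $[r+1,2r]$), and by Condition 1 these halves are complements of one another. First I would fix $\word{v} \in \mathbf{B}(\word{w},p_f,p_b,\word{B_f},\word{B_b},i,j,r)$ satisfying the stated hypothesis $\word{v}_{[i-s,i+1]} \ge \word{w}_{[1,s]}$ for every $s \in [i]$, and consider the symbol $\word{v}_{i+1}$. The goal is to show that each of the five defining conditions for membership in $\mathbf{B}$ with parameters $(p_f',p_b',XW[\word{B_f},\word{v}_{i+1}],XW[\word{B_b},S(\word{v}_{i+1})],i+1,j,r)$ is inherited, once $p_f'$ and $p_b'$ are defined by the prefix-extension rule in the statement (note the paper writes $XW$ here, matching the array $WX$ of Proposition \ref{prop:complexity_WX} up to the naming used in this subsection).

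The key steps, in order: (i) Condition 1 is immediate, as extending the front by $\word{v}_{i+1}$ forces, by the symmetry constraint, the symbol at position $r+i+1$ to be $S(\word{v}_{i+1})$, so $\word{v}_{[1,r]} = S(\word{v}_{[r+1,2r]})$ is preserved. (ii) For Conditions 2 and 3, I would invoke Proposition \ref{prop:complexity_WX}: since $\word{v}_{[1,i]}$ is strictly bounded by $\word{B_f}$ and $\word{v}_{i+1}$ extends it, the subword of $\word{w}$ strictly bounding $\word{v}_{[1,i+1]}$ is exactly $WX[\word{B_f},\word{v}_{i+1}]$, and symmetrically $\word{v}_{[r+1,r+i+1]}$ is strictly bounded by $WX[\word{B_b},S(\word{v}_{i+1})]$ because its $(i+1)$-th symbol is $S(\word{v}_{i+1})$. (iii) For Conditions 4 and 5, I would argue that $\word{v}_{i+1} \ge \word{w}_{p_f+1}$: otherwise $\word{v}_{[i-p_f,i+1]} < \word{w}_{[1,p_f+1]}$, contradicting the hypothesis at $s = p_f$. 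If $\word{v}_{i+1} = \word{w}_{p_f+1}$ then the matched prefix grows by one, so $p_f' = p_f+1$; otherwise the match breaks and $p_f' = 0$, which is consistent since $\word{v}_{[i-s,i+1]} > \word{w}_{[1,s+1]}$ then. The back-prefix argument for $p_b'$ is identical with $S(\word{v}_{i+1})$ in place of $\word{v}_{i+1}$ and $p_b$ in place of $p_f$.

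The main obstacle I anticipate is the back-half prefix bookkeeping in step (iii): unlike the front, where the hypothesis $\word{v}_{[i-s,i+1]} \ge \word{w}_{[1,s]}$ is handed to us directly, the analogous statement for positions around $r+i$ is not assumed, so I need to verify that membership in $\beta(\word{w},r,j)$ — specifically the requirement that $\Angle{\word{v}}_t > \word{w}$ for every $t \in [r+1,2r]$ — already forces the back subword to be $\ge$ the relevant prefix of $\word{w}$, or else handle the case $S(\word{v}_{i+1}) < \word{w}_{p_b+1}$ by observing it simply sets $p_b' = 0$ without violating any condition (the strict-bounding update via $XW$ absorbs this). I would make this precise by noting that the definition of strictly bounding subwords already records all the needed comparison information in $\word{B_b}$, so that no additional hypothesis on the back half is required: the update $\word{B_b} \mapsto XW[\word{B_b},S(\word{v}_{i+1})]$ is well-defined for either value of $S(\word{v}_{i+1})$, and $p_b'$ is determined purely by whether $S(\word{v}_{i+1}) = \word{w}_{p_b+1}$. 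Everything else is routine once these two updates are seen to be the correct refinements of the parameters.
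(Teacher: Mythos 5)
Your proposal is correct and follows essentially the same route as the paper's own (much terser) proof, which simply invokes the argument of Proposition \ref{prop:alpha_structure} to update both bounding subwords via the $WX$ array and both prefix lengths by the match/no-match rule. Your additional care about the back-half bookkeeping is a reasonable elaboration of a point the paper leaves implicit, but it does not constitute a different approach.
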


\begin{proof}
Following the same arguments as Proposition \ref{prop:alpha_structure}, observe that $\word{v}_{[1,i + 1]}$ is bound by $XW[\word{B_f},\word{v}_{i + 1}]$ and $S(\word{v}_{[1,i + 1]})$ is bound by $XW[\word{B_b},S(\word{v}_{i + 1})]$.
Similarly, the value of $p_f'$ is $p_f + 1$ if and only if $\word{v}_{i + 1} = \word{w}_{p_f + 1}$, and must be 0 otherwise.
Further the value of $p_b'$ is $p_b + 1$ if and only if $S(\word{v}_{i + 1}) = \word{w}_{p_b + 1}$, and 0 otherwise.
\end{proof}

\begin{corollary}
\label{col:beta_cartesian}
Let $\word{v},\word{u} \in \mathbf{B}(\word{w},p_f,p_b,\word{B_f},\word{B_b},i,j,r)$ be a pair of words and let $\word{v}' = \word{u}_{[1,i]} : \word{v}_{[i + 1, r]} : S(\word{v}_{[1,i]} : \word{u}_{[i + 1, r]})$.
Then $\word{v}' \in \mathbf{B}(\word{w}, p_f', p_b', \word{B_f}', \word{B_b}',i,j,r)$ if and only if $\word{v} \in \mathbf{B}(\word{w}, p_f', p_b', \word{B_f}', \word{B_b}',i,j,r)$.
\end{corollary}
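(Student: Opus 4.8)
The plan is to deduce this corollary from Proposition~\ref{prop:beta_structure} in the same way Corollary~\ref{col:alpha_cartesian} follows from Proposition~\ref{prop:alpha_structure}. The underlying principle is that $\mathbf{B}(\word{w}, p_f, p_b, \word{B_f}, \word{B_b}, i, j, r)$ is a ``rectangle'': whether a symmetric word $\word{x}$ of length $2r$ extends to a member of $\mathbf{B}(\word{w}, p_f', p_b', \word{B_f}', \word{B_b}', i+1, j, r)$ (the index written $i$ in the statement; here $p_f', p_b', \word{B_f}', \word{B_b}'$ are the parameters produced by Proposition~\ref{prop:beta_structure} from the level-$i$ parameters and the symbol $\word{x}_{i+1}$) depends on $\word{x}$ only through its symbols in positions $[i+1, r]$ and through the four parameters, which summarise everything about $\word{x}_{[1,i]}$ and its reflection $S(\word{x}_{[1,i]}) = \word{x}_{[r+1, r+i]}$ that the remaining defining conditions of $\mathbf{B}$ can see. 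Granting this, the corollary is immediate: $\word{v}'$ is symmetric by construction, its first $i$ symbols are $\word{u}_{[1,i]}$ and hence are strictly bounded by $\word{B_f}$ with the recorded prefix-match $p_f$ (and $S(\word{u}_{[1,i]})$ strictly bounded by $\word{B_b}$ with prefix-match $p_b$), exactly as $\word{v}_{[1,i]}$ is, since both $\word{v}$ and $\word{u}$ lie in $\mathbf{B}(\word{w}, p_f, p_b, \word{B_f}, \word{B_b}, i, j, r)$; and $\word{v}'_{[i+1, r]} = \word{v}_{[i+1, r]}$. So $\word{v}'$ and $\word{v}$ agree in every piece of data the conditions depend on, and lie in $\mathbf{B}(\word{w}, p_f', p_b', \word{B_f}', \word{B_b}', i+1, j, r)$ simultaneously.

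To make the principle precise I would go condition by condition. Condition~1 is symmetry, which holds for $\word{v}'$ by construction. Conditions~2--5 at level $i+1$ with the stated parameters reduce, via Proposition~\ref{prop:complexity_WX}, Definition~\ref{def:bounding}, and the definition of the primed parameters in Proposition~\ref{prop:beta_structure}, to Conditions~2--5 at level $i$ holding of $\word{x}_{[1,i]}$ together with the single symbol $\word{x}_{i+1}$: appending $\word{x}_{i+1}$ to a word strictly bounded by $\word{B_f}$ yields one strictly bounded by $WX[\word{B_f}, \word{x}_{i+1}] = \word{B_f}'$, and the prefix-match lengths update to $p_f', p_b'$ by the accounting already carried out for Proposition~\ref{prop:alpha_structure}. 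Since $\word{v}$ and $\word{v}'$ share the symbol at position $i+1$ (namely $\word{v}_{i+1}$) and share the level-$i$ data, these four conditions transfer verbatim from one to the other.

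The genuinely non-local requirement, which I expect to be the only real work, is the remaining one: membership of $\mathbf{B}$ inside $\beta(\word{w},r,j)$, i.e.\ that $\Angle{\word{x}}_j \le \word{w}$ while $\Angle{\word{x}}_s > \word{w}$ for $s \in [j-1]$ and $\Angle{\word{x}}_t > \word{w}$ for $t \in [r+1, 2r]$. The difficulty is that a rotation $\Angle{\word{v}'}_s$ with $s \le i$ straddles the splice point between $\word{u}_{[1,i]}$ and $\word{v}_{[i+1,r]}$. I would argue, as in the proof of Proposition~\ref{prop:beta_structure}, that each such comparison against $\word{w}$ is either already resolved within the first $i+1$ symbols read after position $s$ — in which case its outcome is recorded in the four parameters, common to $\word{v}$, $\word{u}$ and $\word{v}'$ — or still open, in which case a suffix of $\word{x}_{[1,i]}$ of length exactly $p_f$ (on the reflected copy, $p_b$) equals a prefix of $\word{w}$, so the comparison's continuation is dictated by the shared symbols $\word{x}_{[i+1,r]}$; here the canonicity of $\word{w}$, which forces $\word{w}_1 = 0$, is what makes the resetting of $p_f, p_b$ to $0$ consistent with ``no open comparison''. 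This shows $\word{v}' \in \beta(\word{w},r,j) \iff \word{v} \in \beta(\word{w},r,j)$, both being equivalent to the same condition on $\word{x}_{[i+1,r]}$ given the shared state; combined with the treatment of Conditions~1--5 this yields the stated equivalence. Everything beyond this observation is the routine case analysis already done for Proposition~\ref{prop:beta_structure}, so the corollary needs only a short proof citing that proposition.
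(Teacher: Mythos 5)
The paper states this corollary without proof, treating it as an immediate consequence of Proposition \ref{prop:beta_structure} (exactly as Corollary \ref{col:alpha_cartesian} is treated), and your argument follows that same intended route: the parameters $(p_f,p_b,\word{B_f},\word{B_b})$ form a sufficient DP state, so swapping in $\word{u}$'s prefix cannot change which level-$(i+1)$ cell the word lands in. Your proposal is correct and in fact supplies more detail than the paper does, in particular by isolating the one genuinely non-local point (rotations straddling the splice position) and resolving it via the recorded prefix-match lengths.
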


\noindent
Proposition \ref{prop:beta_structure} and Corollary \ref{col:beta_cartesian} are used in an analogous manner the Proposition \ref{prop:alpha_structure}.
As before, the goal is not to directly compute the size of $\mathbf{B}(\word{w},p_f,p_b,\word{B_f},\word{B_b},i,j,r)$, but rather to compute the number of suffixes of length $r - i$ of the words therein.
To that end, let $SB(\word{w},p_f,p_b,\word{B_f},\word{B_b},i,j,r)$ be the number of unique suffixes of length $r - i$ of words in $\mathbf{B}(\word{w},p_f,p_b,\word{B_f},\word{B_b},i,j,r)$.
% $\mathbf{B}(\word{w},p_f,p_b,\word{B_f},\word{B_b},i,j,r)$.
Note that the number of suffixes of length $r - i$ of words in $\mathbf{B}(\word{w},p_f,p_b,\word{B_f},\word{B_b},i,j,r)$ equals the number of unique subwords between positions $i + 1$ and $r$ of the words $\mathbf{B}(\word{w},p_f,p_b,\word{B_f},\word{B_b},i,j,r)$.
Additionally, note that following Corollary \ref{col:beta_cartesian}, the size of $\mathbf{B}(\word{w},p_f,p_b,\word{B_f},\word{B_b},i,j,r)$ can be computed by taking the product of the number of unique prefixes of words in $\mathbf{B}(\word{w},p_f,p_b,\word{B_f},\word{B_b},i,j,r)$, and the number of unique suffixes of words in $\mathbf{B}(\word{w},p_f,p_b,\word{B_f},\word{B_b},i,j,r)$.
% Hence, by starting with $\mathbf{B}(\word{w},p_f,p_b,\word{B_f},\word{B_b},0,j,r)$
%process of computing the number of suffixes of words in $\mathbf{B}(\word{w},p_f,p_b,\word{B_b},\word{B_f},i,j,r)$ is divided into cases based on the values of $i,j$ and $r$.
The process of computing the number of such suffixes is divided into four cases based on the values of $i,j$ and $r$.

% When $i < j$, for every word $\word{v}\in \mathbf{B}(\word{w},p_f,p_b,\word{B_f},\word{B_b},i,j,r)$, $v_{i + 1}$ must be greater than or equal to $\word{w}_{p_f + 1}$ to avoid there being a rotation less than $j$ for which $\word{v}$ is less than $\word{w}$.
% Further, the value of the relabelling of $\word{v}_{i + 1}$ must be greater than or equal to $\word{w}_{p_b + 1}$ to avoid any rotation in $[r + 1, 2 \cdot r]$ being less than $\word{w}$.
% Therefore, the symbol at position $i + 1$ can be 0 if and only if $\word{w}_{p_f + 1} = 0$, and can be 1 if and only if $\word{w}_{p_b + 1} = 0$.
% The number of suffixes of length $r - i$ of words in $\mathbf{B}(\word{w},p_f,p_b,\word{B_f},\word{B_b},i,j,r)$ where the symbol at position $i + 1$ is $0$ is equal to the value of $SB(\word{w}, p_f', p_b', XW[\word{B_f},0], XW[\word{B_b},1],i,j,r)$, and the number of suffixes where the symbol at position $i + 1$ is $1$ is equal to the value of $SB(\word{w}, p_f', p_b', XW[\word{B_f},1], XW[\word{B_b},0],i,j,r)$.

\begin{lemma}
\label{lem:i_le_j_beta}
For any $i < j$, the value of $SB(\word{w}, p_f, p_b, \word{B_f}, \word{B_b}, i, j,r)$ equals:
\begin{itemize}
    \item 0 if $\word{w}_{p_f + 1} = 1 \text{ and } \word{w}_{p_b + 1} = 1$.
    \item $SB(\word{w}, p_f + 1, p_b + 1, WX[\word{B_f},0], WX[\word{B_b},1], i + 1, j,r)$ if $\word{w}_{p_f} = 0 \text{ and } \word{w}_{p_b} = 1$.
    \item $SB(\word{w}, p_f + 1, p_b + 1, WX[\word{B_f},1], WX[\word{B_b},0], i + 1, j,r)$ if $\word{w}_{p_f} = 1 \text{ and } \word{w}_{p_b} = 0$.
    \item $SB(\word{w}, p_f + 1, 0, WX[\word{B_f},0], WX[\word{B_b},1], i + 1, j,r)$ + \\$SB(\word{w}, 0, p_b + 1, WX[\word{B_f},1], WX[\word{B_b},0], i + 1, j,r)$ if $\word{w}_{p_f} = 0 \text{ and } \word{w}_{p_b} = 0$
\end{itemize}
\end{lemma}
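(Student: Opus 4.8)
The plan is to mirror the structure of the proof of Lemma~\ref{lem:i_le_j} (the $\alpha$-analogue), now tracking \emph{two} prefix/bounding pairs simultaneously: the front pair $(p_f,\word{B_f})$ recording how $\word{v}_{[1,i+1]}$ relates to the prefix of $\word{w}$, and the back pair $(p_b,\word{B_b})$ recording how $S(\word{v}_{[1,i+1]}) = \word{v}_{[r+1,r+i+1]}$ relates to it. Since $i < j$, every rotation $\Angle{\word{v}}_s$ with $s \in [j-1]$ must exceed $\word{w}$ (Condition~1 in the definition of $\mathbf{B}$, inherited from $\beta$); in particular both $\word{v}_{[1,i+1]}$ extended cyclically and $\word{v}_{[r+1,r+i+1]}$ extended cyclically must not drop below $\word{w}$ at the $(i+1)$-st symbol. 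This is exactly the hypothesis $\word{v}_{[i-s,i+1]} \geq \word{w}_{[1,s]}$ needed to invoke Proposition~\ref{prop:beta_structure}.

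First I would fix the first $i$ symbols of an arbitrary $\word{v} \in \mathbf{B}(\word{w},p_f,p_b,\word{B_f},\word{B_b},i,j,r)$ and enumerate the two choices for $\word{v}_{i+1} \in \{0,1\}$; by the symmetry constraint $\word{v}_{[1,r]} = S(\word{v}_{[r+1,2r]})$, choosing $\word{v}_{i+1}$ simultaneously fixes $\word{v}_{r+i+1} = S(\word{v}_{i+1})$. Now I would argue the forcing: if $\word{w}_{p_f+1} = 1$ then $\word{v}_{i+1}$ is forced to $1$ (else $\word{v}_{[i-p_f,i+1]} < \word{w}_{[1,p_f+1]}$, violating Condition~1), and symmetrically if $\word{w}_{p_b+1} = 1$ then $S(\word{v}_{i+1}) = 1$, i.e. $\word{v}_{i+1} = 0$, is forced. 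When both are $1$ these demands are contradictory, so no valid word exists and the count is $0$. When $\word{w}_{p_f+1} = 0$ and $\word{w}_{p_b+1} = 1$, the back is forced ($\word{v}_{i+1} = 0$), and then $\word{v}_{i+1} = \word{w}_{p_f+1}$ so $p_f' = p_f+1$, while $S(\word{v}_{i+1}) = 1 = \word{w}_{p_b+1}$ so $p_b' = p_b+1$; the bounding words update to $WX[\word{B_f},0]$ and $WX[\word{B_b},1]$ by Proposition~\ref{prop:beta_structure}, giving the single recursive term. The case $\word{w}_{p_f+1}=1,\word{w}_{p_b+1}=0$ is the mirror image. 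Finally, when both are $0$, neither symbol is forced, so both $\word{v}_{i+1}=0$ and $\word{v}_{i+1}=1$ are admissible; the $\word{v}_{i+1}=0$ branch matches the front prefix (giving $p_f'=p_f+1$) but breaks the back prefix (giving $p_b'=0$, since $S(0)=1 \neq 0 = \word{w}_{p_b+1}$), and the $\word{v}_{i+1}=1$ branch does the reverse, so the count splits as the sum of the two corresponding $SB$ values. In each branch, Corollary~\ref{col:beta_cartesian} guarantees that the set of length-$(r-i-1)$ suffixes realizable after fixing $\word{v}_{[1,i+1]}$ depends only on the updated parameters, so the suffix counts add cleanly across disjoint branches and no suffix is double-counted.

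The step I expect to be the main obstacle is verifying that the two branches in the last case genuinely contribute \emph{disjoint} sets of suffixes (so that addition, rather than inclusion–exclusion, is correct). Here one uses that the $(i+1)$-st symbol itself differs between the branches, hence the length-$(r-i)$ suffixes of the corresponding words — which include position $i+1$ — are automatically distinct across branches; within a branch the correctness of the per-branch count is precisely the inductive hypothesis on $SB(\cdot, i+1, \cdot)$. A secondary subtlety is confirming that the hypothesis of Proposition~\ref{prop:beta_structure}, namely $\word{v}_{[i-s,i+1]} \geq \word{w}_{[1,s]}$ for all $s \in [i]$, holds for every $\word{v}$ counted by $\mathbf{B}(\word{w},p_f,p_b,\word{B_f},\word{B_b},i,j,r)$ with $i<j$; this follows because any such $\word{v}$ lies in $\beta(\word{w},r,j)$ and $j$ is the \emph{smallest} rotation with $\Angle{\word{v}}_j \leq \word{w}$, forcing every earlier prefix comparison to come out $\geq$. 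With these two points pinned down, the four-way case analysis is a routine unwinding of the definitions and the update rules of Proposition~\ref{prop:beta_structure}.
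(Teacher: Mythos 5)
Your proposal is correct and takes essentially the same route as the paper: a four-way case split on $(\word{w}_{p_f+1},\word{w}_{p_b+1})$, using the symmetry constraint to tie $\word{v}_{r+i+1}=S(\word{v}_{i+1})$ to $\word{v}_{i+1}$, deriving the forcing/contradiction in the first three cases and the two-branch sum in the last, with the parameter updates given by Proposition~\ref{prop:beta_structure}. Your added remarks on the disjointness of the two branches and on why the hypothesis of Proposition~\ref{prop:beta_structure} holds when $i<j$ are correct and slightly more careful than the paper's own write-up (which also silently reads the lemma's conditions as referring to $\word{w}_{p_f+1}$ and $\word{w}_{p_b+1}$ throughout).
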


\begin{proof}
We prove this lemma in a piece wise manner.
Note that given the first $i$ symbols of any word $\word{v} \in \mathbf{B}(\word{w}, p_f, p_b, \word{B_f}, \word{B_b}, i, j,r)$, there are two possibilities for the $i + 1^{th}$ symbol.
Either $\word{v}_{i + 1} = 0$ or $\word{v}_{i + 1} = 1$.
In the first case, when $\word{w}_{p_f + 1} = 1 \text{ and } \word{w}_{p_b + 1} = 1$, if $\word{v}_{i + 1} = 0$ then $\word{v}_{[i - p_f, i + 1]} < \word{w}$, contradicting the assumption that $\word{v} \in \beta(\word{w},j,r)$.
Similarly, if $\word{v}_{i + 1} = 1$ then $\word{v}_{r + i + 1} = 0$, and therefore $\word{v}_{[r + i - p_b, r + i + 1]} < \word{w}$.
Therefore, in this case there can be no possible words in $\mathbf{B}(\word{w}, p_f, p_b, \word{B_f}, \word{B_b}, i, j,r)$.

In the second case, where $\word{w}_{p_f} = 0 \text{ and } \word{w}_{p_b} = 1$, if $\word{v}_{i + 1} = 1$ then following the same arguments as above $\word{v}_{[r + i - p_b, r + i + 1]} < \word{w}$ leading to a contradiction.
If $\word{v}_{i + 1} = 0$ then $\word{v}_{[i - p_f, i + 1]} = \word{w}_{[1, p_f + 1]}$ and $\word{v}_{[r + i - p_b, r + i + 1]} = \word{w}_{[1, p_b + 1]}$, therefore the value of $SB(\word{w}, p_f, p_b, \word{B_f}, \word{B_b}, i, j,r)$ exactly equals $SB(\word{w}, p_f + 1, p_b + 1, WX[\word{B_f},0], WX[\word{B_b},1], i + 1, j,r)$.

In the third case where $\word{w}_{p_f} = 1 \text{ and } \word{w}_{p_b} = 0$, if $\word{v}_{i + 1} = 0$ then following the same arguments as in the first case $\word{v}_{[i - p_f,i + 1]} < \word{w}$ leading to a contradiction.
Therefore, $\word{v}_{i + 1} = 1$.
And hence $\word{v}_{[i - p_f, i + 1]} = \word{w}_{[1, p_f + 1]}$ and $\word{v}_{[r + i - p_b, r + i + 1]} = \word{w}_{[1, p_b + 1]}$, therefore the value of $SB(\word{w}, p_f, p_b, \word{B_f}, \word{B_b}, i, j,r)$ exactly equals $SB(\word{w}, p_f + 1, p_b + 1, WX[\word{B_f},1], WX[\word{B_b},0], i + 1, j,r)$.

In the final case, if $\word{v}_{i + 1} = 0$ then $\word{v}_{[i - p_f, i + 1]} = \word{w}_{[1, p_f + 1]}$ and $\word{v}_{[r + i - p_b, r + i + 1]} > \word{w}_{[1, p_b + 1]}$.
Hence the number of unique suffixes of length $r - i$ of words in $\mathbf{B}(\word{w}, p_f, p_b, \word{B_f}, \word{B_b}, i, j,r)$ where the symbol at position $i + 1$ is $0$ is $SB(\word{w}, p_f + 1, 0, WX[\word{B_f},0], WX[\word{B_b},1], i + 1, j,r)$.
Similarly, if $\word{v}_{i + 1} = 1$ then $\word{v}_{[i - p_f, i + 1]} > \word{w}_{[1, p_f + 1]}$ and $\word{v}_{[r + i - p_b, r + i + 1]} = \word{w}_{[1, p_b + 1]}$.
Hence the number of unique suffixes of length $r - i$ of words in $\mathbf{B}(\word{w}, p_f, p_b, \word{B_f}, \word{B_b}, i, j,r)$ where the symbol at position $i + 1$ is $1$ is $SB(\word{w}, 0, p_b + 1, WX[\word{B_f},1], WX[\word{B_b},0], i + 1, j,r)$.
Therefore the value of $SB(\word{w}, p_f, p_b, \word{B_f}, \word{B_b}, i, j,r)$ is $SB(\word{w}, p_f + 1, 0, WX[\word{B_f},0], WX[\word{B_b},1], i + 1, j,r) + SB(\word{w}, 0, p_b + 1, WX[\word{B_f},1], WX[\word{B_b},0], i + 1, j,r)$.
\end{proof}

% When $i = j$, then the value of $SB(\word{w},p_f,p_b,\word{B_f},\word{B_b},i,j,r)$ depends primarily on the value of $p_f$.
% If $p_f > 0$, then as $\Angle{v}_j < \word{w}$ for every $v \in \mathbf{B}(\word{w},p_f,p_b,\word{B_f},\word{B_b},i,j,r)$, $\Angle{v}_{j - p_f} < \word{w}$, contradicting the assumption that $j$ is the smallest rotation for which $\word{v}$ is smaller than $\word{w}$.
% Hence, if $p_f > 0$, then the set $\mathbf{B}(\word{w},p_f,p_b,\word{B_f},\word{B_b},i,j,r)$ must be empty and by extension have no suffixes of length $r - i$.
% If $p_f = 0$ then as $\word{w}_1 = 0$, the symbol $\word{v}_{i + 1}$ must be 0 for every $\word{v} \in \mathbf{B}(\word{w},p_f,p_b,\word{B_f},\word{B_b},i,j,r)$.
% Therefore, the value of $SB(\word{w},p_f,p_b,\word{B_f},\word{B_b},i,j,r)$ is exactly equal to the value of $SB(\word{w},1,p_b',,XW[\word{B_f},0],WX[\word{B_b},1],i + 1,j,r)$ of length $r - i - 1$.

\begin{lemma}
\label{lem:i_e_j_beta}
Let $k$ be the length of the longest prefix of $\word{w}$ such that $\word{w}_{[1,k]} = 0^k$.
The value of\\ $SB(\word{w}, p_f, p_b, \word{B_f}, \word{B_b}, j, j,r)$ is:
\begin{itemize}
    \item 0 if $p_f > 0$
    \item 0 if $k \geq r - j$ and $O^{r - j} : \word{B_b} \geq \word{w}_{[1,r]}$.
    \item 0 if $k \geq r - j$ and $\word{w}_{[1,p_b]} : 1^{r - j} : \word{B_f} < \word{w}_{[1,p_b + r]}$.
    \item 1 if $k \geq r - j$ and $O^{r - j} : \word{B_b} < \word{w}$ and none of the above conditions hold.
    \item $SB(\word{w}, k, p_b + k, WX[\word{B_f},0^k], WX[\word{B_b},1^k], j + k, j,r)$ if $\word{w}_{[p_f + 1, p_f + k]} = 1^k$ and $k < r - j$.
    \item $SB(\word{w}, k, 0,  WX[\word{B_f},0^k], WX[\word{B_b},1^k], j + k, j,r)$ if $k < r - j$ and $\word{w}_{[p_f + 1, p_f + k]} \neq 1^k$.
\end{itemize}
\end{lemma}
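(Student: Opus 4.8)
The plan is to prove the formula by the same symbol-by-symbol analysis used for Lemmas~\ref{lem:i_le_j_beta} and~\ref{lem:i_e_j}. Fix an arbitrary $\word{v} \in \mathbf{B}(\word{w},p_f,p_b,\word{B_f},\word{B_b},j,j,r)$. At this stage the prefix $\word{v}_{[1,j]}$ is pinned down up to the summary $(\word{B_f},p_f)$, its mirror image $\word{v}_{[r+1,r+j]} = S(\word{v}_{[1,j]})$ up to $(\word{B_b},p_b)$, and by the symmetry condition the whole word is determined by $\word{v}_{[1,r]}$; the free part is $\word{v}_{[j+1,r]}$, of length $r-j$, and $SB$ is the number of distinct admissible free parts, which by Corollary~\ref{col:beta_cartesian} does not depend on the particular completion $\word{v}_{[1,j]}$. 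Observe that the rotation $\Angle{\word{v}}_j$ being resolved at this step begins with $\word{v}_{[j+1,r]}$, then $S(\word{v}_{[1,r]})$, then $\word{v}_{[1,j]}$. First I would dispose of the case $p_f>0$: then $\word{v}_{[j-p_f+1,j]} = \word{w}_{[1,p_f]}$, so the rotation $\Angle{\word{v}}_{j-p_f}$ begins with the nonempty prefix $\word{w}_{[1,p_f]}$ of $\word{w}$ followed by the start of $\Angle{\word{v}}_j$; since $\Angle{\word{v}}_j \le \word{w}$ and $\word{w}$ is a canonical representative (hence begins with $0$), this forces $\Angle{\word{v}}_{j-p_f} < \word{w}$, contradicting minimality of $j$ exactly as in Lemma~\ref{lem:i_e_j}. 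Hence $SB=0$, and in all remaining cases $p_f=0$, so $\word{v}_j = 1$.

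Next I would treat the saturated regime $k \ge r-j$. Since $\Angle{\word{v}}_j$ begins with $\word{v}_{[j+1,r]}$ while $\word{w}$ begins with $0^{r-j}$ (because $k \ge r-j$), the constraint $\Angle{\word{v}}_j \le \word{w}$ forces $\word{v}_{[j+1,r]} = 0^{r-j}$; so at most one free part is admissible and $SB \in \{0,1\}$. It then remains to decide whether the unique candidate $\word{v}^{*} = \word{v}_{[1,j]} : 0^{r-j} : S(\word{v}_{[1,j]}) : 1^{r-j}$ lies in $\mathbf{B}(\ldots)$, and to show this decision is uniform over the choice of $\word{v}_{[1,j]}$. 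For the condition $\Angle{\word{v}^{*}}_j \le \word{w}$ I would compare its length-$r$ prefix $0^{r-j} : S(\word{v}_{[1,j]})$ with $\word{w}_{[1,r]} = 0^{r-j} : \word{w}_{[r-j+1,r]}$; since $\word{w}_{[r-j+1,r]}$ is a length-$j$ subword of $\word{w}$ and $\word{B_b}$ strictly bounds $S(\word{v}_{[1,j]})$, the definition of bounding subwords yields $\word{w}_{[r-j+1,r]} \le \word{B_b}$ or $\word{w}_{[r-j+1,r]} > S(\word{v}_{[1,j]})$, so the outcome is governed entirely by whether $0^{r-j} : \word{B_b} \ge \word{w}_{[1,r]}$ (forcing $SB=0$) or $0^{r-j} : \word{B_b} < \word{w}$ (keeping $SB=1$). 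For the $\beta$-requirement that every rotation of $\word{v}^{*}$ starting in $[r+1,2r]$ strictly exceeds $\word{w}$, I would argue that the only one that can fail is the rotation beginning at position $r+j-p_b+1$, inside the block $S(\word{v}_{[1,j]})$ (every other begins with a $1$ or begins within the run $1^{r-j}$, and hence already exceeds $\word{w}$); this rotation begins $\word{w}_{[1,p_b]} : 1^{r-j} : \word{v}_{[1,j]}$, and since $\word{B_f}$ strictly bounds $\word{v}_{[1,j]}$, one checks it is $\le \word{w}$ exactly when $\word{w}_{[1,p_b]} : 1^{r-j} : \word{B_f} < \word{w}_{[1,p_b+r]}$, which is the last $SB=0$ branch. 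When none of these obstructions occurs $\word{v}^{*}$ is valid and $SB=1$; minimality of $j$ is inherited from the earlier stages because $\word{v}_{[1,j]}$ is untouched and already summarised by $\word{B_f}$.

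For the recursive regime $k<r-j$: the leading $k$ symbols of the free part satisfy $\word{v}_{[j+1,j+k]} \le \word{w}_{[1,k]} = 0^k$, forcing $\word{v}_{[j+1,j+k]} = 0^k$ while leaving $\word{v}_{j+k+1}$ free, so prepending this fixed block is a bijection between the length-$(r-j)$ and the length-$(r-j-k)$ admissible free parts, giving $SB(\word{w},p_f,p_b,\word{B_f},\word{B_b},j,j,r) = SB(\word{w},p_f',p_b',\word{B_f}',\word{B_b}',j+k,j,r)$ with $j+k<r$. Applying Proposition~\ref{prop:beta_structure} $k$ times (extending by the block $0^k$ at the front and its mirror $1^k = S(0^k)$ at the back, using the array $WX$ of Proposition~\ref{prop:complexity_WX}) gives $\word{B_f}' = WX[\word{B_f},0^k]$ and $\word{B_b}' = WX[\word{B_b},1^k]$; the new front prefix length is $p_f'=k$, because $\word{v}_j = 1 \ne 0 = \word{w}_1$ blocks any match longer than the trailing $0^k = \word{w}_{[1,k]}$; and the new back prefix length is $p_b+k$ when $\word{w}_{[p_b+1,p_b+k]} = 1^k$ (so that $\word{w}_{[1,p_b]}:1^k$ remains a prefix of $\word{w}$) and $0$ otherwise, since the mirror window then ends in a block of at least $k\ge 1$ ones, which is incompatible with $\word{w}$ beginning with $0$. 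These cases reproduce the stated recurrences.

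The step I expect to be the main obstacle is the saturated regime: pinning down precisely which back-window rotation can violate the $\beta$-condition, and proving that the comparisons of $\Angle{\word{v}^{*}}_j$ and of $\Angle{\word{v}^{*}}_{r+j-p_b}$ against $\word{w}$ have a truth value uniform over all completions $\word{v}_{[1,j]}$ compatible with $(\word{B_f},\word{B_b})$. Everything else is a translation of the per-symbol distinctions of Lemmas~\ref{lem:i_le_j_beta} and~\ref{lem:i_ge_j} through Proposition~\ref{prop:beta_structure} and the $WX$ table; the genuinely new ingredient here is extracting a single-comparison criterion from the bounding-subword state once the free window collapses to the constant word $0^{r-j}$.
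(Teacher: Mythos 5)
Your proposal follows the same route as the paper's proof: the $p_f>0$ case is killed by exhibiting the earlier rotation $\Angle{\word{v}}_{j-p_f}\le\word{w}$ contradicting minimality of $j$; the saturated case $k\ge r-j$ forces the free block to be $0^{r-j}$ and reduces membership to the two comparisons $0^{r-j}:\word{B_b}$ versus $\word{w}_{[1,r]}$ and $\word{w}_{[1,p_b]}:1^{r-j}:\word{B_f}$ versus $\word{w}_{[1,p_b+r]}$ via the bounding-subword definition; and the case $k<r-j$ forces the block $0^k$ and recurses with the stated state update (where you correctly use the condition $\word{w}_{[p_b+1,p_b+k]}=1^k$ that the paper's own proof uses, rather than the $p_f$-indexed condition in the lemma statement). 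You are in places more explicit than the paper (e.g.\ identifying the unique back-window rotation that can fail, and justifying $p_f'=k$), but the decomposition and the key comparisons are the same.
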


\begin{proof}
Observe that in order for $\word{v}_{[j,2 \cdot r]} : \word{v}_{[1,j-1]}$ to be smaller than $\word{w}$, the first $k$ symbols of $\word{v}_{[j,2 \cdot r]} : \word{v}_{[1,j-1]}$ must equal $0$.
In the first case, as $\word{v}_{[j,2 \cdot r]} : \word{v}_{[1,j-1]} \leq \word{w}$, then $\word{w}_{[1,p_f]} : \word{v}_{[j,2 \cdot r]} : \word{v}_{[1,j-1]} $ must also be smaller than $\word{w}$, contradicting the assumption that $\word{v}$ is in $\beta(\word{w},j,r)$.
In the second case, as $\word{v}_{[r,r + j]} > \word{B_b}$, if $O^{r - j} : \word{B_b} \geq \word{w}_{[1,r]}$ then $0^{r - j} : \word{v}_{[r,r + j]}$ must be strictly greater than $\word{w}_{[1,r]}$.
Similarly, in the third case, as $\word{w}_{[1,p_b]} : 1^{r - j} : \word{B_f} < \word{w}_{[1,p_b + r]}$, by the definition of strictly bounding subwords, $\word{v}_{[1,j]} < \word{w}_{[p_b + r - j,p_b + r]}$.
In the fourth case, as every rotation of the last $r$ symbols of $\word{v}$ is greater than $\word{w}$, rotation by $j$ is the smallest such rotation for which $\word{v}$ is less than $\word{w}$, and there is only a single possible value of the last $r - j$ symbols, there is only a single possible suffix of length $r - i$ of the words in $\mathbf{B}(\word{w}, p_f, p_b, \word{B_f}, \word{B_b}, j, j,r)$.

In the fifth case, if the subword $\word{w}_{[p_b + 1, p_b + k]}$ equals $1^k$ then the value of $SB(\word{w}, p_f, p_b, \word{B_f}, \word{B_b}, j, j,r)$ equals the value of $SB(\word{w}, k, p_b + k, WX[\word{B_f},0^k], WX[\word{B_b},1^k], j + k, j,r)$.
Finally, if the above condition does not hold, as the first symbol in $\word{w}$ must be 0, the longest prefix of $\word{w}$ that can be made from a suffix of $\word{v}_{[r,r + j +k]}$ is the empty word.
Hence the value of $SB(\word{w}, p_f, p_b, \word{B_f}, \word{B_b}, j, j,r)$ is $SB(\word{w}, k, 0,  WX[\word{B_f},0^k], WX[\word{B_b},1^k], j + k, j,r)$.
\end{proof}

\noindent
Before we present the final case, a further auxiliary set $\mathbf{Y}(\word{w},i,p_b, \word{B_f})$ is introduced containing the set of words of length $i$ where every word $\word{v} \in \mathbf{Y}(\word{w}, i,p_b, \word{B_f})$ and $j \in [p_b + i + |\word{B_f}|]$ $(\word{w}_{[1,p_b]} : \word{v} : \word{B_f})_{[p_b + i + |\word{B_f}| - j, p_b + i + |\word{B_f}|]} > \word{w}$.
This set is used to help finish computing the value of $SB(\word{w}, p_f, p_b, \word{B_f}, \word{B_b}, j, j,r)$.
Leaving the means of computing the size of $\mathbf{Y}(\word{w}, i,p_b, \word{B_f})$ as a black box for now, Lemma \ref{lem:i_ge_j_beta} completes the method of computing  $SB(\word{w}, p_f, p_b, \word{B_f}, \word{B_b}, j, j,r)$.

\begin{lemma}
\label{lem:i_ge_j_beta}
Let $j < i$.
The value of $SB(\word{w}, p_f, p_b, \word{B_f}, \word{B_b}, i, j,r)$ is:
\begin{itemize}
    \item $0$ if $i = r$ and $\word{w}_{[1,p_f]} : \word{B_f} \geq \word{w}_{[1,p_f + r]}$.
    \item $1$ if $i = r$ and $\word{w}_{[1,p_f]} : \word{B_f} < \word{w}_{[1,p_f + r]}$.
    \item $|\mathbf{Y}(\word{w}, r - i - 1, p_b + 1, WX[\word{B_f}, 0])|$ if $\word{w}_{p_f + 1} = 1$ and $\word{w}_{p_b + 1} = 1$.
    \item $SB(\word{w}, p_f + 1, p_b + 1, WX[\word{B_f},0], WX[\word{B_b},1], i + 1, j,r)$ if $\word{w}_{p_f + 1} = 0$ and $\word{w}_{p_b + 1} = 1$.
    \item $|\mathbf{Y}(\word{w}, r - i - 1, p_b + 1, WX[\word{B_f}, 0])| + SB(\word{w}, p_f + 1, p_b + 1, WX[\word{B_f},1], WX[\word{B_b},0], i + 1, j,r)$ if $\word{w}_{p_f + 1} = 1$ and $\word{w}_{p_b + 1} = 0$.
    \item $SB(\word{w}, p_f + 1, 0, WX[\word{B_f},0], WX[\word{B_b},1], i + 1, j,r)$ if $\word{w}_{p_f + 1} = 0$ and $\word{w}_{p_b + 1} = 0$.
\end{itemize}
\end{lemma}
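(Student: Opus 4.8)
The plan is to argue exactly as in the earlier extension lemmas (Lemmas~\ref{lem:i_le_j}--\ref{lem:i_ge_j} for $\alpha$ and Lemmas~\ref{lem:i_le_j_beta}--\ref{lem:i_e_j_beta} for $\beta$): fix an arbitrary $\word{v} \in \mathbf{B}(\word{w},p_f,p_b,\word{B_f},\word{B_b},i,j,r)$ with $j < i$, branch on the two possible values of $\word{v}_{i+1}$, observe that the symmetry condition $\word{v}_{[1,r]} = S(\word{v}_{[r+1,2\cdot r]})$ simultaneously fixes $\word{v}_{r+i+1} = S(\word{v}_{i+1})$, and then identify which sub-class of $\mathbf{B}$ at level $i+1$ each extension lands in, using Proposition~\ref{prop:beta_structure} for how $p_f,p_b,\word{B_f},\word{B_b}$ update. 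Corollary~\ref{col:beta_cartesian} makes the number of length-$(r-i)$ suffixes depend only on this class, so summing the suffix counts of the admissible children yields $SB(\word{w},p_f,p_b,\word{B_f},\word{B_b},i,j,r)$. Because $j < i$, the rotation $\Angle{\word{v}}_j$ has already begun its comparison with $\word{w}$, and membership in $\beta(\word{w},r,j)$ imposes the two simultaneous constraints that drive the case split: the \emph{front} constraint $\Angle{\word{v}}_j \le \word{w}$ (tracked by $p_f,\word{B_f}$) and the \emph{back} constraint $\Angle{\word{v}}_t > \word{w}$ for every $t \in [r+1,2\cdot r]$ (tracked by $p_b,\word{B_b}$).

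For the base case $i = r$ the residual suffix is empty, so $SB \in \{0,1\}$: the word is fixed up to its class and lies in $\mathbf{B}$ iff the now fully determined rotation $\Angle{\word{v}}_j$ is strictly smaller than $\word{w}$. Using that $\word{v}_{[1,r]}$ is strictly bounded by $\word{B_f}$ and that $\word{w}_{[1,p_f]}$ is the matched prefix relevant to rotation $j$, the semantics of strict bounding reduces this to the comparison of $\word{w}_{[1,p_f]} : \word{B_f}$ with $\word{w}_{[1,p_f+r]}$, giving the first two cases; the back constraints contribute nothing further here since they are already encoded in the class through $p_b,\word{B_b}$. For $i < r$ the admissible choices of $\word{v}_{i+1}$ are cut down by both constraints at once: we cannot take $\word{v}_{i+1} > \word{w}_{p_f+1}$ (that would push $\Angle{\word{v}}_j$ above $\word{w}$), while $\word{v}_{i+1} = \word{w}_{p_f+1}$ keeps the front comparison alive (so $p_f$ increments and $\word{B_f} \mapsto WX[\word{B_f},\word{v}_{i+1}]$) and $\word{v}_{i+1} < \word{w}_{p_f+1}$ permanently resolves the front by making $\Angle{\word{v}}_j$ strictly smaller; symmetrically, since $\word{v}_{r+i+1} = S(\word{v}_{i+1})$, we cannot have $S(\word{v}_{i+1}) < \word{w}_{p_b+1}$, equality keeps the back comparison alive, and a strictly larger value clears it (and resets the back prefix, as $\word{w}$ begins with $0$). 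Intersecting these over the four possibilities for the pair $(\word{w}_{p_f+1},\word{w}_{p_b+1})$ reproduces the stated cases: whenever the chosen symbol keeps the front comparison alive we recurse via $SB$ at level $i+1$ with the updated parameters, and whenever it resolves the front, the remaining $r-i-1$ symbols carry no further front obligation but must still keep every second-half rotation above $\word{w}$, which is exactly the quantity $|\mathbf{Y}(\word{w},r-i-1,p_b+1,WX[\word{B_f},0])|$.

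I expect the main obstacle to be this last point: justifying that $\mathbf{Y}$ with the parameters $r-i-1$, $p_b+1$ and $WX[\word{B_f},0]$ counts precisely the completions that keep all of $\Angle{\word{v}}_{r+1},\dots,\Angle{\word{v}}_{2\cdot r}$ above $\word{w}$ once the front constraint has been discharged. This means converting the whole family of second-half rotation inequalities — which, via $\word{v}_{[r+1,2\cdot r]} = S(\word{v}_{[1,r]})$, are inequalities on $S$ applied to the still-free front symbols together with the cyclic wraparound — into the single ``all suffixes exceed $\word{w}$'' condition that defines $\mathbf{Y}$, and carefully accounting for the symbol just placed and the wraparound in the $p_b+1$ and $WX[\word{B_f},0]$ arguments. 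This is the feature that genuinely separates the $\beta$ analysis from the labelled case of Lemma~\ref{lem:i_ge_j}: the ``free'' tail is not actually free, which is why the clean term $2^{r-i-1}$ there is replaced by $|\mathbf{Y}(\cdot)|$ here. The remaining verifications are routine symbol-by-symbol comparisons of the same flavour as in Lemma~\ref{lem:i_le_j_beta}, and they depend on the characterisation of $\mathbf{Y}$ established separately.
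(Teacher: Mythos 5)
Your proposal follows essentially the same route as the paper's proof: branch on the value of $\word{v}_{i+1}$ (with $\word{v}_{r+i+1} = S(\word{v}_{i+1})$ forced by symmetry), use the front constraint $\Angle{\word{v}}_j \le \word{w}$ and the back constraint on rotations in $[r+1,2\cdot r]$ to prune and update $(p_f,p_b,\word{B_f},\word{B_b})$ via Proposition~\ref{prop:beta_structure}, handle $i=r$ through the strict-bounding comparison of $\word{w}_{[1,p_f]}:\word{B_f}$ with $\word{w}_{[1,p_f+r]}$, and replace the free-tail count by $|\mathbf{Y}(\cdot)|$ once the front comparison is discharged. The subtlety you flag about the $\mathbf{Y}$ term is exactly the point the paper's proof addresses (deferring the characterisation of $\mathbf{Y}$ to Lemma~\ref{lem:Y_size_of}), so the proposal is a faithful match.
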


\begin{proof}
Note that in the first 2 cases, we assume that the recursive method has been used to determine the size up to this point and therefore, $p_f = j$.
In the first case, if $i = r$ then every symbol in $\word{v}$ is fixed.
Further, as $\word{v}_{[r - p_f, 2 \cdot r]} > \word{w}_{[1,p_f]} : \word{B_f}$, then if $\word{w}_{[1,p_f]} : \word{B_f} \geq \word{w}_{[1,p_f + r]}$, $\word{v}_{[r - p_f, 2 \cdot r]} > \word{w}_{[1,p_f + r]}$, contradicting the assumption that $\word{v} \in \beta(\word{w}, j, r)$.
Similarly, in the second case, as $\word{v}_{[r + 1, 2 \cdot 2]}$ must be less than $\word{w}_{[p_f + 1, p_f + r]}$, if $\word{w}_{[1,p_f]} : \word{B_f} < \word{w}_{[1,p_f + r]}$ then $\word{v}_{[1,p_f + r]} < \word{w}_{[1,p_f + r]}$.

In the third case, following the arguments in Lemma \ref{lem:i_le_j_beta}, the value of $\word{v}_{i + 1}$ must be 0.
Further, as $\word{w}_{p_f + 1} = 1$, $\Angle{\word{v}}_j < \word{w}$ regardless of the value of any symbol after $i$.
Therefore it is sufficient to ensure that the remaining symbols satisfy the condition that $\Angle{\word{v}}_t < \word{w}$ for ever $t \in [r + 1, 2 \cdot r]$.
As any word $\word{u} \in \mathbf{Y}(\word{w}, r - i - 1, p_b + 1, WX[\word{B_f}, 0])$ satisfies the condition that $(\word{w}_{[1,p_b]} : \word{u} : \word{B_f})_{[p_b + i - 1 + |\word{B_f}| - s, p_b + i  - 1 + |\word{B_f}|]} > \word{w}$ for every $s  \in [p_b + i + |\word{B_f}|]$, the value of $\Angle{\word{v}}_t > \word{w}$ for ever $t \in [r + 1, 2 \cdot r]$.
Further, any word not in $\mathbf{Y}(\word{w}, r - i - 1, p_b + 1, WX[\word{B_f}, 0])$ would allow for some rotation $t \in [r + 1, 2 \cdot r]$ for which $\Angle{\word{v}}_t < \word{w}$.

In the fourth case, note that the only possible value of $\word{v}_{i + 1}$ is $0$ following the arguments laid out in Lemma \ref{lem:i_le_j_beta}.
Therefore value of $SB(\word{w}, p_f, p_b, \word{B_f}, \word{B_b}, i, j,r)$ is equal to $SB(\word{w}, p_f + 1, p_b + 1, WX[\word{B_f},0], WX[\word{B_b},1], i + 1, j,r)$.

In the fifth case, the value of $\word{v}_{i + 1}$ can be either $0$ or $1$ for every $\word{v} \in \mathbf{B}(\word{w}, p_f, p_b, \word{B_f}, \word{B_b}, j, j,r)$.
If $\word{v}_{i + 1} = 0$, then as $\word{v}_{[i - p_f,i + 1]} < \word{w}_{[1,p_f + 1]}$, the values of the remain symbols in $\word{v}$ can be determined as in the third case, giving a total of $|\mathbf{Y}(\word{w}, r - i - 1, p_b + 1, WX[\word{B_f}, 0])|$ possible values.
Following the same arguments as in Lemma \ref{lem:i_le_j_beta}, the number of words in $\mathbf{B}(\word{w}, p_f, p_b, \word{B_f}, \word{B_b}, j, j,r)$ where the symbol at position $i + 1$ is 1 is $SB(\word{w}, p_f + 1, p_b + 1, WX[\word{B_f},1], WX[\word{B_b},0], i + 1, j,r)$.

In the sixth case, following the same arguments as in Lemma \ref{lem:i_le_j_beta}, the value of $SB(\word{w}, p_f, p_b, \word{B_f}, \word{B_b}, i, j,r)$ is given by $SB(\word{w}, p_f + 1, 0, WX[\word{B_f},0], WX[\word{B_b},1], i + 1, j,r)$.
\end{proof}

\begin{lemma}
\label{lem:Y_size_of}
The size of $\mathbf{Y}(\word{w},i,p_b,\word{B_f})$ is equal to:
\begin{itemize}
    \item $0$ if $i = 0$ and $\word{w}_{[1,p_b]} : \word{B_f} < \word{w}_{[1,p_b + |\word{B_f}|]}$.
    \item $1$ if $i = 0$ and $\word{w}_{[1,p_b]} : \word{B_f} \geq \word{w}_{[1,p_b + |\word{B_f}|]}$.
    \item $\mathbf{Y}(\word{w},i - 1,p_b + 1,WX[\word{B_f},1])$ if $i > 0$ and $\word{w}_{p_b + 1} = 1$.
    \item $\mathbf{Y}(\word{w},i - 1,p_b + 1,WX[\word{B_f},0]) + \mathbf{Y}(\word{w},i - 1,0,WX[\word{B_f},0])$ if $i > 0$ and $\word{w}_{p_b + 1} = 0$.
\end{itemize}
\end{lemma}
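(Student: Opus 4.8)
The plan is to prove the claimed recurrence by induction on $i$, following the same template as the dynamic programs for $SA$ and $SB$ (Lemmas \ref{lem:i_le_j_beta}, \ref{lem:i_e_j_beta} and \ref{lem:i_ge_j_beta}). Recall that $\mathbf{Y}(\word{w},i,p_b,\word{B_f})$ is the set of words $\word{v}$ of length $i$ for which every suffix of $Z := \word{w}_{[1,p_b]} : \word{v} : \word{B_f}$ is at least $\word{w}$ in the length-normalised order of Section~\ref{sec:prelims}; here $\word{w}_{[1,p_b]}$ records the prefix of $\word{w}$ already matched just before the free block $\word{v}$, and $\word{B_f}$ is a subword of $\word{w}$ produced through the $WX$ array, so by Definition~\ref{def:bounding} and Proposition~\ref{prop:complexity_WX} it carries exactly the information needed to resolve any suffix comparison that terminates inside the $\word{B_f}$ block. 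The induction will peel one symbol of $\word{v}$ at a time, maintaining the pair $(p_b,\word{B_f})$ as it does in Proposition~\ref{prop:beta_structure}.

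First I would settle the base case $i = 0$. Then the only candidate for $\word{v}$ is the empty word, so $|\mathbf{Y}(\word{w},0,p_b,\word{B_f})| \in \{0,1\}$, and membership amounts to requiring that every suffix of $\word{w}_{[1,p_b]} : \word{B_f}$ be at least $\word{w}$. Every suffix strictly shorter than the whole word lies inside the $\word{B_f}$ block (once $\word{B_f}$ is of its final length) and is therefore already $\ge \word{w}$ by the bounding invariant attached to $\word{B_f}$; hence the decisive constraint is the comparison of $\word{w}_{[1,p_b]} : \word{B_f}$ itself against the length-$(p_b + |\word{B_f}|)$ prefix $\word{w}_{[1,p_b + |\word{B_f}|]}$ of $\word{w}$. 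This yields value $1$ precisely when $\word{w}_{[1,p_b]} : \word{B_f} \ge \word{w}_{[1,p_b + |\word{B_f}|]}$ and $0$ otherwise, as stated.

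For the inductive step $i > 0$ I would condition on the next symbol of $\word{v}$. Placing it after the currently matched prefix $\word{w}_{[1,p_b]}$ either extends that match, when the symbol equals $\word{w}_{p_b+1}$, in which case the prefix counter becomes $p_b+1$, or breaks it, in which case the counter resets to $0$; simultaneously the bounding subword governing the adjacent block is updated through the $WX$ array exactly as in Proposition~\ref{prop:beta_structure}, and the residual constraints on the remaining $i-1$ symbols are precisely membership in the corresponding smaller instance of $\mathbf{Y}$. If $\word{w}_{p_b+1} = 1$, then choosing the symbol to be $0$ makes the full suffix $Z$ smaller than $\word{w}$ at position $p_b+1$, so the symbol is forced to be $1$ and we obtain the single term $\mathbf{Y}(\word{w},i-1,p_b+1,WX[\word{B_f},1])$. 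If $\word{w}_{p_b+1} = 0$, both choices are admissible, one extending the prefix (contributing $\mathbf{Y}(\word{w},i-1,p_b+1,WX[\word{B_f},0])$) and one breaking it (contributing $\mathbf{Y}(\word{w},i-1,0,WX[\word{B_f},0])$); since the two families of words differ in that symbol they are disjoint and together exhaust $\mathbf{Y}(\word{w},i,p_b,\word{B_f})$, giving the stated sum.

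I expect the main obstacle to be the careful suffix bookkeeping in the inductive step: one must check that every suffix of $\word{w}_{[1,p_b]} : \word{v} : \word{B_f}$ is either a suffix of $\word{w}_{[1,p_b']} : \word{v}_{[2,i]} : \word{B_f}'$ for the updated parameters (hence governed by the smaller $\mathbf{Y}$-instance) or a suffix reaching the freshly placed symbol (hence governed by the comparison with $\word{w}_{p_b+1}$), so that no constraint is dropped, none is double-counted, and the case split is exhaustive. The base case then needs only the routine verification that the bounding invariant carried by $\word{B_f}$ really does make the single comparison $\word{w}_{[1,p_b]} : \word{B_f}$ versus $\word{w}_{[1,p_b+|\word{B_f}|]}$ decisive, so that the dichotomy between value $0$ and value $1$ is correct.
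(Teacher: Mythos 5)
Your proof is correct and follows essentially the same route as the paper's: peel the leading symbol of $\word{v}$, case on whether $\word{w}_{p_b+1}$ forces that symbol or leaves it free (updating $p_b$ and $\word{B_f}$ through $WX$ exactly as in Proposition \ref{prop:beta_structure}), and resolve the base case $i=0$ by the single comparison of $\word{w}_{[1,p_b]} : \word{B_f}$ against the corresponding prefix of $\word{w}$. If anything, your treatment of the base case and of the suffix bookkeeping is more explicit than the paper's, which simply asserts these steps by analogy with Lemma \ref{lem:i_le_j_beta}.
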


\begin{proof}
In the first case, the only value for $\word{v} \in \mathbf{Y}(\word{w},i,p_b,\word{B_f})$ is the empty word $\emptyset$.
As $\word{w}_{[1,p_b]} : \word{B_f} < \word{w}_{[1,p_b + |\word{B_f}|]}$, the size of $\mathbf{Y}(\word{w},i,p_b,\word{B_f})$ must be 0.
Similarly, in the second case, the only value for $word{v} \in \mathbf{Y}(\word{w},i,p_b,\word{B_f})$ is the empty word $\emptyset$.
However, as $\word{w}_{[1,p_b]} : \word{B_f} \geq \word{w}_{[1,p_b + |\word{B_f}|]}$, the empty word is a valid choice for $\mathbf{Y}(\word{w},i,p_b,\word{B_f})$ and hence $|\mathbf{Y}(\word{w},i,p_b,\word{B_f})|= 1$.

In the third case, following the same arguments as in Lemma \ref{lem:i_le_j_beta}, the only possible value for $\word{v}_1$ is $1$, therefore the size of $\mathbf{Y}(\word{w},i,p_b,\word{B_f})$ equals the size of $\mathbf{Y}(\word{w},i - 1,p_b + 1,WX[\word{B_f},1])$.
Finally, in the fourth case, the value of $\word{v}_1$ can either be $0$ or $1$.
In the case $\word{v}_1 = 0$, then following the same arguments as in Lemma \ref{lem:i_le_j_beta}, there are $\mathbf{Y}(\word{w},i - 1,p_b + 1,WX[\word{B_b},0])$ possible suffixes.
Similarly, if $\word{v}_1 = 1$, then there are $\mathbf{Y}(\word{w},i - 1,0,WX[\word{B_b},0])$ possible suffixes.
Hence the total number of possible values of $\word{v}$ is $\mathbf{Y}(\word{w},i - 1,p_b + 1,WX[\word{B_b},0]) + \mathbf{Y}(\word{w},i - 1,p_b + 1,WX[\word{B_b},0])$.
\end{proof}

The above provides an outline allowing for counting the number of words $\word{v} \in \beta(\word{w},j,r)$ where $\word{v}_{[1,i]} \not\sqsubseteq \word{w}$ and $S(\word{v}_{[1,i]}) \not\sqsubseteq \word{w}$ for every $i \in [r]$.
The number of words where either $\word{v}_{[1,i]} \sqsubseteq \word{w}$ or $S(\word{v}_{[1,i]}) \sqsubseteq \word{w}$, but $\word{v}_{[1,i + 1]} \not\sqsubseteq \word{w}$ and $S(\word{v}_{[1,i + 1]}) \not\sqsubseteq \word{w}$ equals the value of $SB(\word{w}, p_f, p_b, \word{B_f}, \word{B_b}, i + 1, j,r)$, where:
\begin{itemize}
    \item $p_f$ is the length of the longest suffix of $\word{v}_{[1,i + 1]}$ that is also a prefix of $\word{w}$, i.e. the largest value such that $\word{v}_{[i + 1 - p_f,i + 1]} = \word{w}_{[1,p_f]}$.
    \item $p_b$ is the length of the longest suffix of $S(\word{v}_{[1,i + 1]})$ that is also a prefix of $\word{w}$, i.e. the largest value such that $S(\word{v}_{[i + 1 - p_b,i + 1]}) = \word{w}_{[1,p_b]}$.
    \item $\word{B_f}$ is the subword of $\word{w}$ that bounds $\word{v}_{[1,i + 1]}$.
    \item $\word{B_b}$ is the subword of $\word{w}$ that bounds $S(\word{v}_{[1,i + 1]})$.
\end{itemize}
Note that if $i \geq j$, it is necessary to check that $\word{v}_{[1,i + 1]}$ is a valid prefix following the same arguments in Lemmas \ref{lem:i_e_j_beta} and \ref{lem:i_ge_j_beta}.
Using these observations, the size of $\beta(\word{w},j,r)$ can be computed in $O(n^5)$ time.

\begin{lemma}
\label{lem:counting_beta}
The size of $\beta(\word{w},j,r)$ can be computed in $O(n^5)$ time.
\end{lemma}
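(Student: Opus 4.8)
The plan is to assemble the size of $\beta(\word{w},j,r)$ by combining the recursive machinery of Lemmas~\ref{lem:i_le_j_beta}, \ref{lem:i_e_j_beta}, \ref{lem:i_ge_j_beta}, and the auxiliary set $\mathbf{Y}$ from Lemma~\ref{lem:Y_size_of}, exactly mirroring the structure of the proof of Lemma~\ref{lem:counting_alpha} but with the extra bookkeeping for the ``front'' prefix/bound $(p_f,\word{B_f})$ running alongside the ``back'' data $(p_b,\word{B_b})$. First I would establish, as a preliminary step, that $SB(\word{w},p_f,p_b,\word{B_f},\word{B_b},i,j,r)$ can be tabulated by dynamic programming over decreasing $i$: the base case $i=r$ is handled by the first two bullets of Lemma~\ref{lem:i_ge_j_beta} in $O(n)$ time each (a single subword comparison), and for $i<r$ each of the recurrences in Lemmas~\ref{lem:i_le_j_beta}, \ref{lem:i_e_j_beta}, \ref{lem:i_ge_j_beta} expresses the value at $i$ in terms of $O(1)$ already-computed values at $i+1$ (or at $j+k$ in the $i=j$ case), together with an $O(1)$ lookup into the $WX$ table and, in two of the subcases of Lemma~\ref{lem:i_ge_j_beta}, a single lookup into the precomputed $\mathbf{Y}$ table. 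Thus once $\mathbf{Y}$ is available, filling the $SB$ table costs $O(1)$ per entry; since the indices range over $p_f,p_b,i \in [r]$ and $\word{B_f},\word{B_b}$ each over the $O(n)$ subwords of $\word{w}$ of the relevant length — giving $O(n^4)$ relevant states for a fixed pair $(j,r)$ — this is $O(n^4)$ work after an $O(n^4)$ precomputation of $\mathbf{Y}$ via Lemma~\ref{lem:Y_size_of} (whose table is indexed by $i,p_b\in[r]$ and $\word{B_f}$, i.e.\ $O(n^3)$ states at $O(1)$ each).

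Next I would carry out the ``seeding'' step described in the paragraph preceding the lemma statement: the set $\beta(\word{w},j,r)$ is partitioned according to the shortest prefix length $i+1$ at which \emph{both} $\word{v}_{[1,i+1]} \not\sqsubseteq \word{w}$ and $S(\word{v}_{[1,i+1]}) \not\sqsubseteq \word{w}$. For each such cutoff, one enumerates all candidate prefixes $\word{v}_{[1,i+1]}$ — of which there are $O(n^2)$, since the prefix is determined up to its bounding subword and the point of deviation — checks in $O(n)$ time (using the validity conditions from Lemmas~\ref{lem:i_e_j_beta} and \ref{lem:i_ge_j_beta} when $i \ge j$) whether it extends to a word of $\beta(\word{w},j,r)$, computes the associated parameters $p_f,p_b,\word{B_f},\word{B_b}$, and then adds the precomputed $SB$ value for the corresponding state. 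By Corollary~\ref{col:beta_cartesian} the suffix count is independent of which valid prefix was chosen, so this correctly counts $|\beta(\word{w},j,r)|$; the enumeration-and-check loop costs $O(n^3)$, dominated by the $O(n^4)$ table fills.

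Collecting the costs gives $O(n^4)$ per pair $(j,r)$; but $SB$, $\mathbf{Y}$, and the seeding all depend on $j$ (and $r$) only through the recurrence structure, and to evaluate $\beta$ for the eventual sum over $j \in [r]$ one either recomputes per $j$ — an extra factor $O(n)$ — or, more carefully, observes that the $i>j$ regime's tables can be shared while the $i=j$ transition differs per $j$; either way the bound claimed is $O(n^5)$ for computing $\beta(\word{w},j,r)$ across the needed range, which is what the lemma asserts. The main obstacle I anticipate is not the complexity accounting but verifying the \emph{correctness} of the six-way case split in Lemma~\ref{lem:i_ge_j_beta} together with the exact semantics of $\mathbf{Y}$: one must check that ``no rotation of the last $r$ symbols of $\word{v}$ dips below $\word{w}$'' is faithfully captured by the sliding condition defining $\mathbf{Y}(\word{w},i,p_b,\word{B_f})$, including the delicate boundary interaction between the $\word{B_f}$ tail (the bound on the front block, which reappears at the wrap-around) and the $p_b$-length prefix of $\word{w}$ already committed at the front of the back block — off-by-one errors in the indices $p_b + i + |\word{B_f}|$ are the real danger here, and I would discharge that by a careful case analysis of where a violating rotation could begin.
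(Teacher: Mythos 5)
Your proposal follows essentially the same route as the paper: precompute the $\mathbf{Y}$ table via Lemma \ref{lem:Y_size_of}, fill the $SB$ table by dynamic programming over decreasing $i$ using Lemmas \ref{lem:i_le_j_beta}--\ref{lem:i_ge_j_beta}, then seed with the $O(n^2)$ candidate prefixes $\word{u}:x$ checked in $O(n)$ time each against the per-regime validity conditions, and invoke Corollary \ref{col:beta_cartesian} to multiply in the suffix counts. The one slip is in the accounting: the $SB$ table has five independent $O(n)$-range indices ($i$, $p_f$, $p_b$, $\word{B_f}$, $\word{B_b}$), hence $O(n^5)$ states rather than $O(n^4)$, which is exactly why the paper charges $O(n^5)$ for a single fixed pair $(j,r)$ --- there is no spare factor of $n$ left for re-running the tabulation over all $j$, nor does the lemma as stated require one.
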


\begin{proof}
Note first that the size of $\mathbf{Y}(\word{w},i,p_b,\word{B_f})$ can be computed in $O(n^4)$ time using the dynamic algorithm implicitly given in Lemma \ref{lem:Y_size_of} for every $i,p_b \in [r]$ and $\word{B_f} \sqsubseteq \word{w}$.
The computation in this case is done by starting with $p_b = i = 0$ and $\word{B_f} = \emptyset$, then progressing by increasing the value of $p_b$ first, then the length of the subwords $\word{B_f}$ and finally the value of $i$.
Using the dynamic algorithm implicitly given above, note that the value of $SB(\word{w}, p_f, p_b, \word{B_f}, \word{B_b}, i, j,r)$ can be computed in $O(r^5)$ provided the value of $\mathbf{Y}(\word{w},i,p_b,\word{B_f})$ has been precomputed for every  $i,p_b \in [r]$ and $\word{B_f} \sqsubseteq \word{w}$.
In this case, we compute the value of $SB(\word{w}, p_f, p_b, \word{B_f}, \word{B_b}, i , j,r)$ starting with $i = r, \word{B_f}, \word{B_b} \sqsubseteq_i \word{w}$, $p_b,p_f = 0$, and progressing in increasing value of $p_b$ and $p_f$, then decreasing the value of $i$ while maintaining the size of $\word{B_f}$ and $\word{B_b}$ as being the size of $i$.
Finally, each of the $n^2$ possible subwords of $\word{w}$ can be checked as a potential prefixes of words in $\beta(\word{w},j,r)$.

Let $\word{u} \sqsubseteq_{i - 1} \word{w}$ be a subword of $\word{w}$ of length $i - 1$ such that $ i \leq r - 1$ and let $x \in \{0,1\}$ be a symbol such that $\word{u} : x \not\sqsubseteq \word{w}$.
Further let $\word{B_f} \sqsubseteq_i \word{w}$ be the word bounding $\word{u} : x$ and let $\word{B_b} \sqsubseteq_i \word{w}$ be the subword bounding $S(\word{u} : x)$.
Similarly, let $p_f$ be the length of the longest suffix of $\word{u} : x$ that is a prefix of $\word{w}$, and let $p_b$ be the length of the longest suffix of $S(\word{u} : x)$ that is a prefix of $\word{w}$, i.e. the maximum values for which $(\word{u} : x)_{i - p_f, i} = \word{w}_{[1,p_f]}$ and $S(\word{u} : x)_{i - p_b, i} = \word{w}_{[1,p_b]}$.
Observe that $\word{u} : x$ is the prefix of some word in $\beta(\word{w},j,r)$ if and only if one of the following hold:
\begin{itemize}
    \item if $i < j$ then $(\word{u} : x)_{[i - s, i]} \geq \word{w}_{[1,s]}$ and $S(\word{u} : x)_{i - s, i} \geq \word{w}_{[1,s]}$ for every $s \in [i]$.
    \item if $i = j$ then $p_f = 0$ and $S(\word{u} : x)_{i - s, i} \geq \word{w}_{[1,s]}$ for every $s \in [i]$.
    \item if $r > i > j$ then $p_f = i - j$ and  $S(\word{u} : x)_{i - s, i} \geq \word{w}_{[1,s]}$ for every $s \in [i]$.
    % \item if $i = r$ then $\word{w}_{[1,p_f]} : \word{B_f} < \word{w}_{[1,p_f + r]}$.
\end{itemize}
Provided $\word{u} : x$ satisfies one of the above conditions, following corollary \ref{col:beta_cartesian} the number of words in $\beta(\word{w},r,j)$ where $\word{u}:x$ is the prefix is $SB(\word{w}, p_f, p_b, \word{B_f}, \word{B_b}, i, j)$.
As there are at most $n^2$ such subwords of $\word{w}$ and each condition can be checked in at most $O(n)$, the size of $\beta(\word{w}, r,j)$ can be found in $O(n^5 + n^3) = O(n^5)$ time.
\end{proof}

\begin{theorem}
\label{thm:symmetric}
The value of $RankSN(\word{w},m)$ can be computed in $O(n^6 \log^2 n)$ time for any $m \leq n$.
\end{theorem}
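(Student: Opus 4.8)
The plan is to compute $RankSN(\word{w},m)$ by collapsing the tower of reductions that the preceding lemmas set up, bottoming out in the cardinalities of the sets $\mathbf{RA}(\word{w},\cdot,S,\cdot)$. First I would unwind the divisor recurrences: by Lemma~\ref{lem:rank_sn_general} it suffices to produce $RankSN(\word{w},m,r)$ for every $r \mid m/2$; by Lemma~\ref{lem:rank_sn_from_sl} each such value is $\sum_{d \mid 2r} RankSL(\word{w},d)$, in which, by Observation~\ref{obs:aperiodicity}, only the even divisors $d = 2r'$ contribute; by Lemma~\ref{lem:rank_sl_from_RB} each surviving term is $|\mathbf{RB}(\word{w},2r',S,r')|/(2r')$; and by Lemma~\ref{lem:RA_to_RB} each $|\mathbf{RB}(\word{w},2r',S,r')|$ is a M\"obius combination of the values $|\mathbf{RA}(\word{w},\cdot,S,p)|$ for $p \mid r'$. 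Taking the union of the divisors that occur, the whole computation is therefore driven by the family $\{\,|\mathbf{RA}(\word{w},\cdot,S,p)| : p \mid m/2\,\}$, together with a bounded number of additions, subtractions and exact divisions on integers of $O(n)$ bits to realise the inversions and the nested divisor sums.

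Second, I would compute each needed cardinality $|\mathbf{RA}(\word{w},\cdot,S,p)|$ through the $\alpha/\beta$ partition. Observations~\ref{obs:beta_mapping} and~\ref{obs:alpha_mapping} give $|\mathbf{RA}(\word{w},2p,S,p)| = \sum_{j=1}^{p}\bigl(|\alpha(\word{w},p,j)| + |\beta(\word{w},p,j)|\bigr)$ for the primitive instance, and a short argument reduces the non-primitive instances $\mathbf{RA}(\word{w},2r',S,p)$ with $p \mid r'$ to this one, since their members are $(r'/p)$-fold powers and the comparison rule for words of unequal length depends only on the underlying period. For a single pair $(j,p)$, Lemma~\ref{lem:counting_alpha} yields $|\alpha(\word{w},p,j)|$ in $O(n^4)$ time and Lemma~\ref{lem:counting_beta} yields $|\beta(\word{w},p,j)|$ in $O(n^5)$ time; summing over the at most $n$ relevant values of $j$ gives $|\mathbf{RA}|$ for one $p$ in $O(n^6)$ time.

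Third, I would add up the cost. Running the previous step over the divisors $p \mid m/2$, together with the $O(n)$-bit arithmetic that glues the M\"obius inversions of Lemma~\ref{lem:RA_to_RB}, the divisions of Lemma~\ref{lem:rank_sl_from_RB} and the divisor-indexed summations of Lemmas~\ref{lem:rank_sn_from_sl} and~\ref{lem:rank_sn_general}, and noting that the $WX$ table of Proposition~\ref{prop:complexity_WX} is precomputed once in $O(n^3 \log n)$ time, gives a total of $O(n^6 \log^2 n)$: the two extra logarithmic factors absorb the manipulation of the exponentially large counts (addition, shifting by the $2^{r-i-1}$ terms, exact division) and the passes over the divisor lattice.

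I expect the main obstacle to be the accounting in the second step rather than any new combinatorics: one must be careful that rebuilding the dynamic-programming tables $SA$ and $SB$ for each pair $(j,p)$ does not inflate the bound, i.e. that the $O(n^4)$ and $O(n^5)$ figures already include constructing those tables for one instance and that only $O(n)$ values of $j$ and $O(n)$ values of $p$ are ever touched, so nothing beyond the per-instance costs is paid. A secondary point to verify carefully is the reduction of the non-primitive $\mathbf{RA}$ instances to the primitive one under the length-mismatch definition of ``smaller than'', which is what makes the nested divisor sums of Lemmas~\ref{lem:rank_sn_from_sl}--\ref{lem:rank_sn_general} telescope to exactly the quantities the $\alpha/\beta$ machinery delivers.
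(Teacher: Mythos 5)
Your proposal follows essentially the same route as the paper: the same chain of reductions (Lemma~\ref{lem:rank_sn_general} $\to$ Lemma~\ref{lem:rank_sn_from_sl} $\to$ Lemma~\ref{lem:rank_sl_from_RB} $\to$ Lemma~\ref{lem:RA_to_RB}), the same $\alpha/\beta$ decomposition of $\mathbf{RA}$ via Observations~\ref{obs:beta_mapping} and~\ref{obs:alpha_mapping}, and the same cost accounting ($O(n^5)$ per $\beta$ instance dominating, $O(n)$ values of $j$, $O(\log n)$ divisors, for $O(n^6\log^2 n)$ total). The extra care you take with the non-primitive $\mathbf{RA}$ instances and the bit-length of the counts goes slightly beyond what the paper writes down, but it does not change the argument.
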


\begin{proof}
Following Lemmas \ref{lem:RA_to_RB}, \ref{lem:rank_sl_from_RB}, and \ref{lem:rank_sn_from_sl}, the value of $RankSN(\word{w},m,r)$ is:
$$
RankSN(\word{w},m,r) = \sum\limits_{d | r} \left( \frac{1}{2 \cdot r}\sum\limits_{p | d} \mu\left(\frac{d}{p}\right) |\mathbf{RA}(\word{w},m,S,p)| \right)
$$
\noindent
From Observations \ref{obs:alpha_mapping} and \ref{obs:beta_mapping} , the size of $\mathbf{RA}(\word{w},m,S,r)$ equals $\sum\limits_{j \in [m]} |\alpha(\word{w},r,j)| + |\beta(\word{w},r,j)|$.
Following Lemma \ref{lem:counting_alpha}, the size of $\alpha(\word{w},r,j)$ can be computed in $O(n^3)$ time.
Following Lemma \ref{lem:counting_beta}, the size of $\beta(\word{w},r,j)$ can be computed in $O(n^5)$ time.
As there are at most $O(n)$ values of $j$, the total time complexity for determining the size of $\mathbf{RA}(\word{w},m,S,r)$ is $O(n^6)$.
As there are at most $O(\log n)$ possible divisors of $r$, the size of $\mathbf{RA}(\word{w},m,S,p)$ needs to be evaluated at most $O(\log n)$ times, giving a total time complexity of $O(n^6 \log n)$.
The value of $RankSN(\word{w},m,r)$ can then be computed in at most $O(\log^{2} n)$ time once the size of $\mathbf{RA}(\word{w},m,S,p)$ has been precomputed for every factor $p$ of $r$.
Finally, following Lemma \ref{lem:rank_sn_general}, the value of $RankSN(\word{w},m)$ can be computed from the value of $RankSN(\word{w},m,r)$ for at most $O(\log n)$ values of $r$.
Therefore the total time complexity of computing $RankSN(\word{w},m)$ is $O(n^6 \log^2 n)$.
\end{proof}

\section{Enclosing Necklaces}
\label{sec:enclosing}

This section shows how to rank a word $\word{w}$ within the set of binary unlabelled necklaces enclosing $\word{w}$.
Note that the rank of $\word{w}$ within this set is equivalent to the number of binary unlabelled necklaces enclosing $\word{w}$.
% The second set we need to consider are \emph{enclosing} necklaces.
As with the ranking approach to symmetric necklaces, we start with the key theoretical results that inform our approach.

\begin{lemma}
\label{lem:ranking_enclosing_words}
Let $RankEN(\word{w},m)$ be the rank of $\word{w}$ within the set of necklaces of length $m$ that enclose $\word{w}$ and let $RankEL(\word{w},m)$ be the rank of $\word{w}$ within the set of Lyndon words of length $m$ that enclose $\word{w}$.
$RankEN(\word{w},m) = \sum\limits_{d | m} RankEL(\word{w},d)$.
\end{lemma}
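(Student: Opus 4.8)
The plan is to show that the set of necklaces of length $m$ enclosing $\word{w}$ decomposes, according to the period of each necklace, into the sets of Lyndon words of length $d$ enclosing $\word{w}$ as $d$ ranges over the divisors of $m$. The key observations are: (i) every necklace $\necklace{v}$ of length $m$ has a unique period, which is an aperiodic word $\word{u}$ of some length $d \mid m$, and the necklace $\necklace{u}$ whose canonical representative is $\Angle{\word{u}}$ is a Lyndon word; (ii) the rotations of $\Angle{\necklace{v}}$ are exactly the $d$ distinct shifts of $\word{u}$, each repeated $m/d$ times, so $\Angle{\necklace{v}} = (\Angle{\necklace{u}})^{m/d}$ and in particular $\Angle{\necklace{v}} = \Angle{\necklace{u}}$ in the sense of the comparison of words of different lengths defined in Section~\ref{sec:prelims} (since $\word{x}^{m} = (\word{x}^{m/d})^{d}$ for the relevant powers). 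Because the enclosing condition $\Angle{\necklace{u}} < \word{w} < \Angle{S(\necklace{u})}$ depends only on the canonical representative as a cyclic word — and $S$ commutes with taking powers, $S(\word{u}^t) = S(\word{u})^t$ — the necklace $\necklace{v}$ encloses $\word{w}$ if and only if its period-necklace $\necklace{u}$ (viewed as a Lyndon word of length $d$) encloses $\word{w}$.

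Concretely, I would first establish the map $\Phi$ sending a necklace $\necklace{v}$ of length $m$ to the Lyndon word obtained from its period, and show $\Phi$ is well defined (the period is unique). Then I would show that for each divisor $d \mid m$, $\Phi$ restricts to a bijection between the necklaces of length $m$ with period length exactly $d$ and the Lyndon words of length $d$; the inverse sends a Lyndon word $\necklace{u}$ of length $d$ to $\necklace{u}^{m/d}$. Next I would verify that this bijection preserves the enclosing property, using the power/relabelling compatibility noted above together with the fact (from the length-comparison definition) that comparing $\word{w}$ against a word and against its power relative to the appropriate lengths gives the same outcome. Summing over all divisors $d \mid m$, and noting that the sets of necklaces of length $m$ with period length $d$ partition the set of all necklaces of length $m$, yields $RankEN(\word{w},m) = \sum_{d \mid m} RankEL(\word{w},d)$.

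The main obstacle I anticipate is being careful with the definition of "encloses" for words of unequal length: the enclosing relation $\Angle{\necklace{u}} < \word{w} < \Angle{S(\necklace{u})}$ for a Lyndon word $\necklace{u}$ of length $d < m$ must be interpreted via the paper's convention $\word{v} < \word{w} \iff \word{v}^{|\word{w}|} < \word{w}^{|\word{v}|}$ or ($\word{v}^{|\word{w}|} = \word{w}^{|\word{v}|}$ and $|\word{v}| < |\word{w}|$). I would need to check that $\word{w}$ of length $m$ lies strictly between $\Angle{\necklace{u}}$ and $\Angle{S(\necklace{u})}$ (length $d$) exactly when it lies strictly between $\Angle{\necklace{u}}^{m/d}$ and $\Angle{S(\necklace{u})}^{m/d}$ (length $m$, equal to the canonical representatives of the length-$m$ necklace $\necklace{v}$ and $S(\necklace{v})$), and that no boundary/equality degeneracy arises — i.e. that $\word{w}$ can never equal a periodic canonical representative while the corresponding strict inequalities on the primitive root hold, which follows since equality of the powers forces equality already at the primitive level. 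Once this length-normalisation is pinned down, the counting argument is immediate.
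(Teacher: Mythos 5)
Your proposal is correct and follows essentially the same route as the paper, which simply observes that every enclosing necklace of length $m$ has a unique period that is a divisor $d$ of $m$ and hence corresponds to exactly one enclosing Lyndon word of length $d$. Your version fills in the details the paper leaves implicit (the bijection via the primitive root, compatibility of $S$ with powers, and the cross-length comparison convention), but the underlying decomposition is identical.
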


\begin{proof}
Observe that every necklace counted by $RankEN(\word{w},m)$ must have a unique period that is a factor of $m$, hence $RankEN(\word{w},m) = \sum\limits_{d | m} RankEL(\word{w},d)$.
\end{proof}

\begin{lemma}
\label{lem:EL_to_rank}
Let $\mathbf{EL}(\word{w},m)$ be the set of words of length $m$ belonging to a Lyndon word that encloses $\word{w}$.
$RankEL(\word{w},m) = \frac{|\mathbf{EL}(\word{w},m))}{m}$.
\end{lemma}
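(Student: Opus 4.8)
The plan is to argue that the set $\mathbf{EL}(\word{w},m)$ decomposes into equivalence classes of size exactly $m$, one class per Lyndon word enclosing $\word{w}$, so that dividing the cardinality by $m$ recovers the count of such Lyndon words. First I would recall that $RankEL(\word{w},m)$ is, by the definition of rank restricted to the enclosing set, simply the number of Lyndon words of length $m$ that enclose $\word{w}$ (a word enclosing $\word{w}$ is, by the definition of the enclosing relation, necessarily counted, so the rank equals the full count; this mirrors the remark opening Section~\ref{sec:enclosing} for necklaces). Hence it suffices to show $|\mathbf{EL}(\word{w},m)| = m \cdot (\text{number of enclosing Lyndon words of length } m)$.

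The key step is the observation that a Lyndon word of length $m$ is aperiodic, and therefore its $m$ rotations $\Angle{\necklace{v}}_0, \Angle{\necklace{v}}_1, \ldots, \Angle{\necklace{v}}_{m-1}$ are all distinct. Thus each Lyndon word that encloses $\word{w}$ contributes exactly $m$ distinct words to $\Sigma^m$. I would then verify that membership of a word $\word{v}$ in $\mathbf{EL}(\word{w},m)$ depends only on the necklace (equivalently the unlabelled necklace) of $\word{v}$: the defining condition is that $\word{v}$ belongs to a Lyndon word $\necklace{u}$ with $\Angle{\necklace{u}} < \word{w} < \Angle{S(\necklace{u})}$, and this is a property of $\necklace{u}$, not of which rotation $\word{v}$ we picked. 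Consequently $\mathbf{EL}(\word{w},m)$ is a disjoint union of full rotation-orbits, each of size $m$, indexed bijectively by the enclosing Lyndon words of length $m$. Summing gives $|\mathbf{EL}(\word{w},m)| = m \cdot RankEL(\word{w},m)$, and rearranging yields the claim.

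I expect the only subtlety — the ``main obstacle'', though it is mild — to be the bookkeeping that the orbit of a word in $\mathbf{EL}(\word{w},m)$ contains no word \emph{outside} $\mathbf{EL}(\word{w},m)$ and no word counted under a different Lyndon word: this is exactly where aperiodicity (orbit size exactly $m$, not a proper divisor) and the rotation-invariance of the enclosing condition are both needed. One should also note that the enclosing condition is genuinely asymmetric in $\necklace{u}$ versus $S(\necklace{u})$ — an enclosing unlabelled necklace is by definition non-symmetric, so $\necklace{u} \neq S(\necklace{u})$ and the pair $\{\necklace{u}, S(\necklace{u})\}$ does not collapse — but this does not affect the orbit count, since we are counting (labelled) Lyndon words here, and $\necklace{u}$ alone (the one with $\Angle{\necklace{u}} < \word{w}$) is the representative singled out by the definition. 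Assembling these points gives the short argument; no computation beyond the orbit-size count is required.
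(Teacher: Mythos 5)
Your proposal is correct and follows the same route as the paper: the paper's proof is a two-line appeal to aperiodicity, noting that each enclosing Lyndon word contributes exactly $m$ distinct rotations to $\mathbf{EL}(\word{w},m)$, which is precisely your orbit-counting argument (you simply spell out the rotation-invariance and disjointness bookkeeping that the paper leaves implicit).
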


\begin{proof}
Following the same arguments as in Lemma \ref{lem:rank_sl_from_RB}, every aperiodic necklace counted by $RankEL(\word{w},m)$ must have exactly $m$ words in $\mathbf{EL}(\word{w},m)$ representing it.
Therefore $RankEL(\word{w},m) = \frac{|\mathbf{EL}(\word{w},m))}{m}$.
\end{proof}

\begin{lemma}
\label{lem:EL_from_words}
Let $\mathbf{EL}(\word{w},m)$ be the set of words of length $m$ belonging to a Lyndon word that encloses $\word{w}$ and let $\mathbf{EN}(\word{w},m)$ be the set of words of length $m$ belonging to a necklace that encloses $\word{w}$.
The size of $\mathbf{EL}(\word{w},m)$ equals $\sum\limits_{d | m}  \mu\left(\frac{m}{d}\right)|\mathbf{EN}(\word{w}, d)| $.
\end{lemma}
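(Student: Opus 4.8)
The plan is to mirror the M\"obius-inversion argument already used for symmetric Lyndon words in Lemma~\ref{lem:RA_to_RB} and for enclosing necklaces versus Lyndon words implicitly via Lemma~\ref{lem:ranking_enclosing_words}. The starting point is the companion counting identity
\[
|\mathbf{EN}(\word{w},m)| = \sum_{d \mid m} |\mathbf{EL}(\word{w},d)|,
\]
from which the claimed formula follows at once by the M\"obius inversion formula, in exactly the same way the inversion is applied in the proof of Lemma~\ref{lem:RA_to_RB}. So the substantive work is establishing this identity.

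To that end I would partition $\mathbf{EN}(\word{w},m)$ according to the length $d \mid m$ of the aperiodic root of each of its words. Every word $\word{v} \in \mathbf{EN}(\word{w},m)$ has a unique aperiodic root $\word{u}$, of some length $d \mid m$, with $\word{v} = \word{u}^{m/d}$, and conversely each aperiodic word $\word{u}$ of length $d$ yields exactly one length-$m$ word $\word{u}^{m/d}$ whose root has length precisely $d$. The heart of the argument is therefore the claim that the necklace of $\word{v}$ encloses $\word{w}$ if and only if the (Lyndon) necklace of its root $\word{u}$ encloses $\word{w}$; granting this, the root map restricts to a bijection between the length-$d$ block of $\mathbf{EN}(\word{w},m)$ and $\mathbf{EL}(\word{w},d)$, which gives the displayed identity.

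To prove the equivalence I would invoke the cross-length comparison of words from Section~\ref{sec:prelims}. Writing $t = m/d$, one has $\Angle{\necklace{v}} = \Angle{\necklace{u}}^t$ and $\Angle{S(\necklace{v})} = \Angle{S(\necklace{u})}^t$, because rotation and the relabelling $S$ both commute with taking powers and the lexicographically extreme rotation of a power is the power of the extreme rotation (here we use that $\word{u}$ is aperiodic so its $d$ rotations are distinct). By definition of the order on words of unequal length, $\Angle{\necklace{u}} < \word{w}$ unfolds to a comparison of $\Angle{\necklace{u}}^m$ with $\word{w}^d$, i.e.\ of $\Angle{\necklace{u}}^t$ with $\word{w}$ as length-$m$ words, and symmetrically $\word{w} < \Angle{S(\necklace{u})}$ unfolds to a comparison of $\word{w}$ with $\Angle{S(\necklace{u})}^t$. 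Chaining these equivalences turns $\Angle{\necklace{u}} < \word{w} < \Angle{S(\necklace{u})}$ into $\Angle{\necklace{v}} < \word{w} < \Angle{S(\necklace{v})}$ and back, which is the desired statement.

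The step I expect to require the most care is the boundary behaviour of the cross-length order when the powers coincide: ties are broken in favour of the shorter word, so $\Angle{\necklace{u}} < \word{w}$ can hold even when $\Angle{\necklace{u}}^t = \word{w}$. This is precisely the case in which $\word{w}$ is itself periodic with root $\word{u}$, and it has to be checked separately that the bookkeeping in the identity still balances, together with the trivial degenerate cases $d = m$ and the constant necklaces, in the same spirit as the aperiodicity caveats handled in Lemmas~\ref{lem:rank_sl_from_RB} and~\ref{lem:EL_to_rank}. Once $|\mathbf{EN}(\word{w},m)| = \sum_{d\mid m}|\mathbf{EL}(\word{w},d)|$ is established, M\"obius inversion immediately gives $|\mathbf{EL}(\word{w},m)| = \sum_{d\mid m}\mu\!\left(\tfrac{m}{d}\right)|\mathbf{EN}(\word{w},d)|$.
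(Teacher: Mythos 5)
Your proposal takes essentially the same route as the paper: establish the divisor-sum identity $|\mathbf{EN}(\word{w},m)| = \sum_{d \mid m} |\mathbf{EL}(\word{w},d)|$ by classifying each word according to its unique aperiodic root, then apply M\"obius inversion. The paper's own proof is only a two-line appeal to the argument of Lemma~\ref{lem:ranking_enclosing_words} followed by inversion, so you are supplying strictly more detail (in particular the transfer of the enclosing property between a word and its root under the cross-length order), and the argument is correct.
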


\begin{proof}
Following the same arguments as in Lemma \ref{lem:ranking_enclosing_words}, the size of $\mathbf{EN}(\word{w},m)$ can be expressed in terms of the size of $\mathbf{EL}(\word{w},d)$ for every factor $d$ of $m$ as $|\mathbf{EN}(\word{w},m)| = \sum\limits_{d | m} |\mathbf{EL}(\word{w},d)|$.
Applying the M\"obius inversoin formula to this equation gives $|\mathbf{EL}(\word{w},m)| = \sum\limits_{d | m}  \mu\left(\frac{m}{d}\right)|\mathbf{EN}(\word{w}, d)|$.
\end{proof}

\noindent
As in the Symmetric case, we partition the set of necklaces into a series of subsets for ease of computation.
Let $\gamma(\word{w},m,r)$ denote the set of words belonging to a necklace which encloses $\word{w}$ such that $r$ is the smallest rotation for which $\word{v} \in \gamma(\word{w},m,r)$ is smaller than $\word{w}$, i.e. the smallest value where $\Angle{\word{v}}_r < \word{w}$.
We further introduce the set $\mathbf{C}(\word{w},i,r,\word{B_f}, \word{B_b}, p_f, p_b) \subseteq \gamma(\word{w},m,r)$ as the set of words where every $\word{v} \in \mathbf{C}(\word{w},i,r,\word{B_f}, \word{B_b}, p_f, p_b)$ satisfies the following conditions:
\begin{enumerate}
    \item \label{cond:enc_small_before_s} $\Angle{\word{v}}_{s} > \word{w}$ for every $s \in [r - 1]$.
    \item \label{cond:bigger_in_every_rotation} $\Angle{S(\word{v})}_s > \word{w}$ for every $s \in [m]$.
    \item \label{cond:bigger_eventually} $\Angle{\word{v}}_r < \word{w}$.
    \item \label{cond:bounding_forward_enclosing} $\word{v}_{[1,i]}$ is bound by $\word{B_f} \sqsubseteq_i \word{w}$.
    \item \label{cond:bounding_backwords_enclosing} $S(\word{v}_{[1,i]})$ is bound by $\word{B_b} \sqsubseteq_i \word{w}$.
    \item \label{cond:} $p_f$ is the length of the longest suffix of $\word{v}_{[1,i]}$ that is a prefix of $\word{w}$, i.e. the largest value such that $\word{v}_{[i - p_f, i]} = \word{w}_{[1,p_f]}$.
    \item $p_b$ is the length of the longest suffix of $S(\word{v}_{[1,i]})$ that is a prefix $\word{w}$, i.e. the largest value such that $S(\word{v}_{[i - p_b, i]}) = \word{w}_{[1,p_b]}$.
\end{enumerate}
Note that Conditions \ref{cond:enc_small_before_s}, \ref{cond:bigger_in_every_rotation}, and \ref{cond:bigger_eventually} are the necessary conditions for $\word{v}$ to be in $\gamma(\word{w},m,r)$.
As before, we break our dynamic programming based approach into several sub cases based on the value of $i$ relative to $r$.
As in the symmetric case, we relay upon a technical proposition.

\begin{proposition}
\label{prop:enclosing_structure}
Given $\word{v} \in \mathbf{C}(\word{w},i,r,\word{B_f}, \word{B_b}, p_f, p_b)$, $\word{v}$ also belongs to $\mathbf{C}(\word{w},i + 1,r,WX[\word{B_f},\word{v}_{i + 1}], WX[\word{B_b},\word{v}_{i + 1}], p_f', p_b') $
\end{proposition}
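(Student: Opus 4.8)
The plan is to follow the template of Propositions~\ref{prop:alpha_structure} and~\ref{prop:beta_structure}, where $p_f'$ and $p_b'$ are defined exactly as in Proposition~\ref{prop:beta_structure}: $p_f' = p_f + 1$ if $\word{v}_{i+1} = \word{w}_{p_f+1}$ and $0$ otherwise, and $p_b' = p_b + 1$ if $S(\word{v}_{i+1}) = \word{w}_{p_b+1}$ and $0$ otherwise. The key structural observation is that the seven conditions defining $\mathbf{C}(\word{w},i,r,\word{B_f},\word{B_b},p_f,p_b)$ split into two groups: Conditions~\ref{cond:enc_small_before_s}, \ref{cond:bigger_in_every_rotation} and~\ref{cond:bigger_eventually} express that $\word{v}$ lies in $\gamma(\word{w},m,r)$ and make no reference to the index $i$, while the remaining four are bookkeeping data describing the length-$i$ prefix of $\word{v}$. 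Since $\word{v}$ is a fixed word that is already a member of $\mathbf{C}(\word{w},i,r,\word{B_f},\word{B_b},p_f,p_b)$, the first group holds verbatim with $i$ replaced by $i+1$, so the entire content of the proposition is that the bookkeeping quantities transform as stated.

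First I would treat the two bounding subwords. By Condition~\ref{cond:bounding_forward_enclosing}, $\word{B_f}$ bounds $\word{v}_{[1,i]}$, so by the defining property of the array $WX$ from Proposition~\ref{prop:complexity_WX}, $WX[\word{B_f},\word{v}_{i+1}]$ bounds $\word{v}_{[1,i]} : \word{v}_{i+1} = \word{v}_{[1,i+1]}$, which is precisely Condition~\ref{cond:bounding_forward_enclosing} at index $i+1$. Applying the same fact to $S(\word{v}_{[1,i]})$, which by Condition~\ref{cond:bounding_backwords_enclosing} is bounded by $\word{B_b}$ and extends by the symbol $S(\word{v}_{i+1})$, shows that $S(\word{v}_{[1,i+1]})$ is bounded by $WX[\word{B_b},S(\word{v}_{i+1})]$, giving Condition~\ref{cond:bounding_backwords_enclosing} at index $i+1$; since $|\Sigma| = 2$ we have $S(\word{v}_{i+1}) = 1 - \word{v}_{i+1}$, so this entry is a function of $\word{v}_{i+1}$ alone, matching the statement.

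Next I would verify the prefix lengths. Writing $p_f'$ for the length of the longest suffix of $\word{v}_{[1,i+1]}$ that is a prefix of $\word{w}$: since $\word{v}_{[i-p_f,i]} = \word{w}_{[1,p_f]}$, the suffix $\word{w}_{[1,p_f]} : \word{v}_{i+1}$ is a prefix of $\word{w}$ exactly when $\word{v}_{i+1} = \word{w}_{p_f+1}$, so in that case $p_f' = p_f + 1$. When $\word{v}_{i+1} \neq \word{w}_{p_f+1}$, I would argue, as in Proposition~\ref{prop:alpha_structure}, that no non-empty suffix of $\word{v}_{[1,i+1]}$ can be a prefix of $\word{w}$, so $p_f' = 0$; this is where Condition~\ref{cond:enc_small_before_s}, together with the maximality of $p_f$, is invoked to exclude shorter matches, and it plays the role of the inequality hypothesis $\word{v}_{[i-s,i+1]} \geq \word{w}_{[1,s]}$ in Proposition~\ref{prop:alpha_structure}. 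The argument for $p_b'$ is identical, with $\word{v}_{i+1}, \word{w}_{p_f+1}$ replaced by $S(\word{v}_{i+1}), \word{w}_{p_b+1}$. Assembling the three points yields $\word{v} \in \mathbf{C}(\word{w},i+1,r,WX[\word{B_f},\word{v}_{i+1}],WX[\word{B_b},S(\word{v}_{i+1})],p_f',p_b')$, as required.

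I expect the main obstacle to be the justification of the ``$p_f' = 0$ otherwise'' clause (and its $p_b'$ counterpart): making precise why a single mismatch at position $p_f+1$ forces a complete reset of the prefix counter rather than a drop to some intermediate value. This requires the constraints defining $\gamma(\word{w},m,r)$ and is the same subtlety already navigated inside the proof of Proposition~\ref{prop:alpha_structure}; everything else is a direct transcription of the $WX$-array semantics.
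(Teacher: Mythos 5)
Your proposal is correct and follows exactly the route the paper intends: the paper gives no proof for Proposition~\ref{prop:enclosing_structure} at all, implicitly deferring to the arguments of Propositions~\ref{prop:alpha_structure} and~\ref{prop:beta_structure}, which is precisely the template you transcribe. You also correctly supply the definitions of $p_f'$ and $p_b'$ that the statement omits and identify that the second $WX$ entry should read $WX[\word{B_b},S(\word{v}_{i+1})]$, matching Proposition~\ref{prop:beta_structure}; your treatment of the ``reset to $0$'' clause is, if anything, more explicit than what the paper provides for the analogous propositions.
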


\begin{corollary}
\label{col:enclosing_cartesian}
Given a pair of words $\word{v},\word{u} \in \mathbf{C}(\word{w},i,r,\word{B_f}, \word{B_b}, p_f, p_b) $ let $\word{v}' = \word{v}_{[1,i]} : \word{u}_{[i + 1, m]}$.
Then $\word{v}' \in \mathbf{C}(\word{w},i + 1,r,\word{B_f}', \word{B_b}', p_f', p_b')$ if and only if $\word{v} \in \mathbf{C}(\word{w},i + 1,r,\word{B_f}', \word{B_b}', p_f', p_b')$.
\end{corollary}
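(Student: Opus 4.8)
The plan is to derive this the way Corollaries~\ref{col:alpha_cartesian} and~\ref{col:beta_cartesian} are derived from Propositions~\ref{prop:alpha_structure} and~\ref{prop:beta_structure}: the real content is that $\mathbf{C}(\word{w},i,r,\word{B_f},\word{B_b},p_f,p_b)$ has a ``rectangular'' (Cartesian-product) shape, so exchanging the length-$i$ prefixes of two of its members again lands inside $\mathbf{C}$, and consequently all the forward bookkeeping behaves identically for $\word{v}$ and $\word{v}'$. The engine is Proposition~\ref{prop:enclosing_structure}: it shows that the updated tuple $(\word{B_f}',\word{B_b}',p_f',p_b')$ obtained after appending one symbol is a function of the old tuple $(\word{B_f},\word{B_b},p_f,p_b)$ and of that one symbol alone, and in particular does not depend on the individual letters of $\word{v}_{[1,i]}$. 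Thus $\word{v}$ and $\word{u}$, which by hypothesis reach the same tuple after $i$ letters, are interchangeable as far as anything from position $i+1$ onwards is concerned.

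Concretely I would first unfold the assertion ``$\word{x}\in\mathbf{C}(\word{w},i+1,r,\word{B_f}',\word{B_b}',p_f',p_b')$'' into two parts: (a) the bounding Conditions~\ref{cond:bounding_forward_enclosing}--\ref{cond:bounding_backwords_enclosing} together with the two prefix-overlap conditions defining $p_f$ and $p_b$, which speak only about $\word{x}_{[1,i+1]}$; and (b) the defining Conditions~\ref{cond:enc_small_before_s}--\ref{cond:bigger_eventually}, i.e.\ membership of $\word{x}$ in $\gamma(\word{w},m,r)$, which is global. For part (a): $\word{v}'$ borrows its length-$i$ prefix (carrying the tuple $(\word{B_f},\word{B_b},p_f,p_b)$) from one of the two input words, and its symbol at position $i+1$ from the construction, so Proposition~\ref{prop:enclosing_structure} produces for $\word{v}'$ exactly the same updated tuple as for the word it is being compared against; hence (a) holds for $\word{v}'$ precisely when it holds for that word. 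For part (b) I would split into the sub-cases $i+1\le r$ and $i+1>r$, mirroring the case analyses already carried out for the $\alpha$ and $\beta$ families: each of Conditions~\ref{cond:enc_small_before_s}, \ref{cond:bigger_in_every_rotation}, \ref{cond:bigger_eventually} is a lexicographic comparison of a rotation of $\word{v}$ (or of $S(\word{v})$) against $\word{w}$, and any such comparison is resolved either inside the first $i+1$ letters --- in which case its outcome is dictated by $\word{B_f},\word{B_b}$ and by the overlaps $p_f,p_b$, all of which $\word{v}'$ inherits --- or strictly later, in which case it depends only on the shared tail $\word{v}_{[i+1,m]}$. In either case the truth value is the same for $\word{v}'$ as for the word it replaces, which is exactly the claimed equivalence.

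I expect part (b) to be the main obstacle: making rigorous the slogan ``Conditions~\ref{cond:enc_small_before_s}--\ref{cond:bigger_eventually} depend only on $(\word{B_f},\word{B_b},p_f,p_b)$ and on $\word{v}_{[i+1,m]}$''. This is exactly where the bounding-subword machinery of Definition~\ref{def:bounding} and the array $WX$ of Proposition~\ref{prop:complexity_WX} earn their keep: $\word{B_f}$ is by definition the largest subword of $\word{w}$ still not exceeding $\word{v}_{[1,i]}$, which is precisely the datum needed to decide whether a wrap-around rotation (one whose rotation amount exceeds the length of the still-unfixed suffix, which becomes relevant only once $i>r$) can dip below $\word{w}$, and $\word{B_b}$ plays the same role for $S(\word{v})$. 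The bulk of the write-up is therefore a patient, configuration-by-configuration verification that, for each position of $i$ relative to $r$, the truth value of each of the three defining conditions really is a function of the carried tuple and of the tail alone. Once that is established, the rectangular structure of $\mathbf{C}(\word{w},i,r,\word{B_f},\word{B_b},p_f,p_b)$ --- and, as its immediate specialisation, the stated ``if and only if'' --- follows with no further work.
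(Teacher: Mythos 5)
The paper offers no proof of this corollary at all---it is presented, like Corollaries \ref{col:alpha_cartesian} and \ref{col:beta_cartesian}, as an immediate consequence of the preceding structure proposition (here Proposition \ref{prop:enclosing_structure})---and your sketch is precisely the natural fleshing-out of that intended argument, so it is consistent with the paper's approach. One small caveat: since $\word{v}'=\word{v}_{[1,i]}:\word{u}_{[i+1,m]}$ inherits its symbol at position $i+1$ from $\word{u}$, the level-$(i+1)$ cell it lands in is the one determined by $\word{u}_{i+1}$, so your phrase ``the word it is being compared against'' must resolve to $\word{u}$ rather than $\word{v}$; the corollary as printed (``iff $\word{v}\in\cdots$'') only matches the $\alpha$/$\beta$ pattern if the roles of prefix and suffix in the definition of $\word{v}'$ are swapped, and your write-up should state explicitly which reading it proves.
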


\begin{lemma}
\label{lem:enclosing_i_le_r}
If $i < r$, then the size of $\mathbf{C}(\word{w},i,r,\word{B_f}, \word{B_b}, p_f, p_b)$ is either:
\begin{itemize}
    \item $0$ if $\word{w}_{p_f + 1} = \word{w}_{p_b + 1} = 1$.
    \item $|\mathbf{C}(\word{w},i+1,r,WX[\word{B_f},0] WX[\word{B_b},1], p_f + 1, p_b + 1)|$ if $\word{w}_{p_f + 1} = 0$ and $\word{w}_{p_b + 1} = 1$.
    \item $|\mathbf{C}(\word{w},i+1,r,WX[\word{B_f},1], WX[\word{B_b},0], p_f + 1, p_b + 1)|$ if $\word{w}_{p_f + 1} = 1$ and $\word{w}_{p_b + 1} = 0$.
    \item $|\mathbf{C}(\word{w},i + 1,r,WX[\word{B_f},0], WX[\word{B_b},1], p_f + 1, 0)| + |\mathbf{C}(\word{w},i,r,WX[\word{B_f},1], WX[\word{B_b},0], 0, p_b + 1)|$ if $\word{w}_{p_f + 1} = \word{w}_{p_b + 1} = 0$.
\end{itemize}
\end{lemma}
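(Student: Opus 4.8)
The plan is to mirror the argument of Lemma~\ref{lem:i_le_j_beta}, now carrying the forward and backward prefix lengths $p_f, p_b$ and the two bounding subwords $\word{B_f}, \word{B_b}$ in parallel. Fix an arbitrary $\word{v} \in \mathbf{C}(\word{w},i,r,\word{B_f}, \word{B_b}, p_f, p_b)$; since $i < r$ there are exactly two candidates for $\word{v}_{i + 1}$, and the first step is to decide, for each value of the pair $(\word{w}_{p_f + 1}, \word{w}_{p_b + 1})$, which candidates are admissible. By definition of $p_f$ we have $\word{v}_{[i - p_f + 1, i]} = \word{w}_{[1,p_f]}$, so the rotation of $\word{v}$ beginning at position $i - p_f + 1$ starts with $\word{w}_{[1,p_f]}\,\word{v}_{i + 1}$; since that rotation index is at most $i < r$, Condition~\ref{cond:enc_small_before_s} (or, in the boundary case $p_f = i$, Condition~\ref{cond:bigger_eventually} together with the fact that $r \geq 1$ forces $\word{v} \geq \word{w}$) forces $\word{v}_{i + 1} \geq \word{w}_{p_f + 1}$. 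Symmetrically, $S(\word{v})_{[i - p_b + 1, i]} = \word{w}_{[1,p_b]}$, and Condition~\ref{cond:bigger_in_every_rotation} forces $S(\word{v}_{i + 1}) \geq \word{w}_{p_b + 1}$, i.e. $\word{v}_{i + 1} \leq S(\word{w}_{p_b + 1})$.

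Next I would split on $(\word{w}_{p_f + 1}, \word{w}_{p_b + 1})$. If both are $1$ the two inequalities demand $\word{v}_{i + 1} = 1$ and $\word{v}_{i + 1} = 0$ at once, so $\mathbf{C}(\word{w},i,r,\word{B_f}, \word{B_b}, p_f, p_b) = \emptyset$ and its size is $0$. If exactly one equals $1$, the value of $\word{v}_{i + 1}$ is pinned to $\word{w}_{p_f + 1}$; in both cases it matches both prefixes, so Proposition~\ref{prop:enclosing_structure} places every such $\word{v}$ in $\mathbf{C}(\word{w},i + 1,r, WX[\word{B_f},\word{v}_{i+1}], WX[\word{B_b},S(\word{v}_{i+1})], p_f + 1, p_b + 1)$, and Corollary~\ref{col:enclosing_cartesian} upgrades this to an equality of sets, giving the stated formulas for these two cases. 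If both equal $0$, both choices of $\word{v}_{i + 1}$ are admissible and partition $\mathbf{C}(\word{w},i,r,\word{B_f}, \word{B_b}, p_f, p_b)$: taking $\word{v}_{i + 1} = 0$ extends the forward prefix to $p_f + 1$ but resets the backward one to $0$ (as $S(0) = 1 \neq 0$), while $\word{v}_{i + 1} = 1$ does the reverse; applying Proposition~\ref{prop:enclosing_structure} and Corollary~\ref{col:enclosing_cartesian} to each part and adding yields $|\mathbf{C}(\word{w},i + 1,r,WX[\word{B_f},0], WX[\word{B_b},1], p_f + 1, 0)| + |\mathbf{C}(\word{w},i + 1,r,WX[\word{B_f},1], WX[\word{B_b},0], 0, p_b + 1)|$ (the second summand being evaluated at index $i + 1$, exactly as in Lemma~\ref{lem:i_le_j_beta}).

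The step I expect to take the most care is showing that comparing $\word{v}_{i + 1}$ with $\word{w}_{p_f + 1}$ and with $S(\word{w}_{p_b + 1})$ is not only necessary but also sufficient for Conditions~\ref{cond:enc_small_before_s} and~\ref{cond:bigger_in_every_rotation} to remain satisfiable once the tail of $\word{v}$ is allowed to vary. Necessity is immediate from the longest-match definitions of $p_f$ and $p_b$; sufficiency uses that $\word{v}$ already lies in $\mathbf{C}(\word{w},i,r,\word{B_f}, \word{B_b}, p_f, p_b)$, so all rotations of $\word{v}$ (resp. of $S(\word{v})$) that start at positions up to $i - p_f$ (resp. $i - p_b$) are already strictly larger than $\word{w}$, and Corollary~\ref{col:enclosing_cartesian} guarantees that replacing $\word{v}_{[i+1,m]}$ by the tail of any other member of the same $\mathbf{C}$-class keeps the word in the appropriate $\mathbf{C}$-class at level $i+1$; the only genuinely delicate sub-case is $p_f = i$, where the relevant rotation is $\word{v}$ itself and one must invoke Condition~\ref{cond:bigger_eventually} (hence $\word{v} \geq \word{w}$, as $r \geq 1$) in place of Condition~\ref{cond:enc_small_before_s}. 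With that settled, the update rules for $(p_f, p_b, \word{B_f}, \word{B_b})$ are precisely those of Proposition~\ref{prop:enclosing_structure} and the recursion closes.
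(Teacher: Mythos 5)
Your proposal is correct and follows essentially the same route as the paper: a case split on the pair $(\word{w}_{p_f+1},\word{w}_{p_b+1})$, using Conditions~\ref{cond:enc_small_before_s} and~\ref{cond:bigger_in_every_rotation} to constrain the admissible values of $\word{v}_{i+1}$, and then Proposition~\ref{prop:enclosing_structure} with Corollary~\ref{col:enclosing_cartesian} to turn the prefix/bound updates into the stated recurrence. Your treatment is in fact slightly more careful than the paper's (explicitly handling the boundary sub-case $p_f = i$ via Condition~\ref{cond:bigger_eventually}, and noting that the second summand in the final case should be evaluated at index $i+1$), but these are refinements of, not departures from, the same argument.
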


\begin{proof}
In the first case, observe that if the symbol in position $i + 1$ of any word $\word{v} \in \mathbf{C}(\word{w},i,r,\word{B_f}, \word{B_b}, p_f, p_b)$ is 0 then $\word{v}_{[i - p_f, i + 1]} < \word{w}_{[1,p_f + 1]}$, violating Condition \ref{cond:enc_small_before_s}.
Similarly if $\word{v}_{i + 1} = 1$ then $S(\word{v}_{[i - p_b, i + 1]}) < \word{w}_{[1,p_b + 1]}$, violating Condition \ref{cond:bigger_in_every_rotation}.

In the second case, if $\word{v}_{i + 1} = 1$ then $S(\word{v}_{[i - p_b, i + 1]}) < \word{w}_{[1,p_b + 1]}$, violating Condition \ref{cond:bigger_in_every_rotation}.
Therefore $\word{v}_{i + 1} = 0$.
As such any word in $\mathbf{C}(\word{w},i+1,r,\word{B_f}, \word{B_b}, p_f, p_b)$ must also be in $\mathbf{C}(\word{w},i,r,WX[\word{B_f},0], WX[\word{B_b},1], p_f + 1, p_b + 1)$.

Similarly in the third case if $\word{v}_{i + 1} = 0$ then $\word{v}_{[i - p_f, i + 1]} < \word{w}_{[1,p_f + 1]}$, violating Condition \ref{cond:enc_small_before_s}.
Therefore $\word{v}_{i + 1} = 1$.
As such any word in $\mathbf{C}(\word{w},i+1,r,\word{B_f}, \word{B_b}, p_f, p_b)$ must also be in $\mathbf{C}(\word{w},i,r,WX[\word{B_f},1], WX[\word{B_b},0], p_f + 1, p_b + 1)$.

In the final case, the value of $\word{v}_{i + 1}$ can be either $0$ or $1$.
If $\word{v}_{i + 1} = 0$ then the prefix of $\word{w}$ starting at position $i - p_f$ is continued, while the prefix in the relabelling is not, giving a total of $|\mathbf{C}(\word{w},i + 1,r,WX[\word{B}_f,0], WX[\word{B}_b,1], p_f + 1, 0)|$ words.
If $\word{v}_{i + 1} = 1$ then the prefix in the relabelling is continued, while the original prefix is not, giving a total of $|\mathbf{C}(\word{w},i,r,WX[\word{B}_f,1], WX[\word{B}_b,0], 0, p_b + 1)|$ words.
Therefore the total size of $\mathbf{C}(\word{w},i,r,\word{B_f}, \word{B_b}, p_f, p_b)$ in this case is $|\mathbf{C}(\word{w},i + 1,r,WX[\word{B}_f,0], WX[\word{B}_b,1], p_f + 1, 0)| + |\mathbf{C}(\word{w},i,r,WX[\word{B}_f,1], WX[\word{B}_b,0], 0, p_b + 1)|$.
\end{proof}

\begin{lemma}
\label{lem:enclosing_equals}
If $i = r$ then the size of $\mathbf{C}(\word{w},i,r,\word{B_f}, \word{B_b}, p_f, p_b)$ is:
\begin{itemize}
    \item 0 if $p_f > 0$.
    \item $|\mathbf{C}(\word{w},i,r,WX[\word{B_f},0], WX[\word{B_b},1], 1, 0)|$ if $\word{w}_{p_b} = 0$.
    \item $|\mathbf{C}(\word{w},i,r,WX[\word{B_f},0], WX[\word{B_b},1], 1, p_b + 1)|$ if $\word{w}_{p_b} = 1$.
\end{itemize}
\end{lemma}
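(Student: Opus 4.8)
The plan is to handle the three cases of the statement in turn: the first is an emptiness claim, while the other two reduce the index from $i = r$ to $i + 1$ by observing that the symbol $\word{v}_{i+1}$ is forced. I would first record that forcing. Let $\word{v} \in \mathbf{C}(\word{w},i,r,\word{B_f},\word{B_b},p_f,p_b)$ with $i = r$. Condition~\ref{cond:bigger_eventually} gives $\Angle{\word{v}}_r < \word{w}$, whose first symbol is $\word{v}_{r+1}$, while the first symbol of $\word{w}$ is $0$ (a canonical representative of a binary unlabelled necklace begins with $0$; if no word is strictly below $\word{w}$ the statement is vacuous). Hence $\word{v}_{r+1} = 0$.

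For the bullet $p_f > 0$ I would derive a contradiction. By the definition of $p_f$, $\word{v}_{[r-p_f+1,\,r]} = \word{w}_{[1,p_f]}$, so in its first $m$ positions the rotation $\Angle{\word{v}}_{r-p_f}$ reads $\word{w}_{[1,p_f]}$ followed by $(\Angle{\word{v}}_r)_{[1,\,m-p_f]}$, whereas $\word{w} = \word{w}_{[1,p_f]} : \word{w}_{[p_f+1,m]}$. Since $\word{w}$ is the canonical representative of its necklace, $\word{w} \le \Angle{\word{w}}_{p_f}$, hence $\word{w}_{[1,\,m-p_f]} \le \word{w}_{[p_f+1,m]}$; and $\Angle{\word{v}}_r < \word{w}$ gives $(\Angle{\word{v}}_r)_{[1,\,m-p_f]} \le \word{w}_{[1,\,m-p_f]}$. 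Chaining these comparisons yields $\Angle{\word{v}}_{r-p_f} \le \word{w}$. If this is strict and $p_f < r$, then $r - p_f \in [r-1]$ and we contradict Condition~\ref{cond:enc_small_before_s}; if it is strict and $p_f = r$, then $\word{v} < \word{w}$, so rotation by $0 < r$ already places $\word{v}$ below $\word{w}$, contradicting that $r$ is the least rotation with $\Angle{\word{v}}_r < \word{w}$. If equality holds throughout, then $\Angle{\word{v}}_{r-p_f} = \word{w}$, so $\word{w}$ lies in the rotation class of $\word{v}$ and is its canonical representative, which contradicts that the necklace of $\word{v}$ encloses $\word{w}$ (enclosure demands the canonical representative be strictly below $\word{w}$). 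Hence the set is empty.

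For the remaining two bullets, $p_f = 0$. Every counted word has $\word{v}_{r+1} = 0$, so by Proposition~\ref{prop:enclosing_structure} it also lies in $\mathbf{C}(\word{w},i+1,r,WX[\word{B_f},0],WX[\word{B_b},1],p_f',p_b')$, and since the forced symbol makes this a size-preserving correspondence (using Corollary~\ref{col:enclosing_cartesian}) it only remains to identify $p_f'$ and $p_b'$. From $p_f = 0$ we get $\word{v}_r = 1$, so $\word{v}_{[r,r+1]} = 10$ cannot extend a prefix of $\word{w}$ past its first symbol; as $\word{v}_{r+1} = 0 = \word{w}_1$ this gives $p_f' = 1$. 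Since $S(\word{v}_{r+1}) = 1$, the longest suffix of the relabelled prefix that is a prefix of $\word{w}$ extends by exactly one when $\word{w}_{p_b+1} = 1$, giving $p_b' = p_b + 1$, and is broken (down to length $0$) when $\word{w}_{p_b+1} = 0$, giving $p_b' = 0$. Substituting the two values of $p_b'$ yields the two stated expressions.

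I expect the $p_f > 0$ bullet to be the main obstacle, and within it the two degenerate configurations: the equality case $\Angle{\word{v}}_{r-p_f} = \word{w}$, which forces $\word{w}$ to be periodic and can only be closed by invoking the enclosure property rather than Conditions~\ref{cond:enc_small_before_s}--\ref{cond:bigger_eventually} alone; and the boundary $p_f = r$, where $r - p_f = 0$ is not an index in $[r-1]$ and one must instead use the minimality of $r$. Making the chain of lexicographic comparisons interact correctly with the canonicity bound $\word{w} \le \Angle{\word{w}}_{p_f}$, and checking these corners, is the point that needs the most care.
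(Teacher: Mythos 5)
Your proof follows the same route as the paper's: force $\word{v}_{r+1}=0$ from $\word{w}_1=0$, kill the $p_f>0$ case by rotating back by $p_f$ to obtain an earlier rotation at or below $\word{w}$, and split the remaining cases on whether $S(\word{v}_{r+1})=1$ extends the relabelled prefix. You are in fact more careful than the paper on the first bullet (the canonicity chaining, the equality subcase, and the $p_f=r$ boundary are all glossed over there), and your reading of the prefix-extension test as depending on $\word{w}_{p_b+1}$ rather than $\word{w}_{p_b}$ corrects what appears to be an off-by-one in the statement.
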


\begin{proof}
Note that $\word{w}_1 = 0$.
Therefore, $\word{v}_{i + 1} = 0$.
In the first case if $\word{v}_ {[i - p_f, i]} \neq \emptyset$ then as $\Angle{\word{v}}_{r} < \word{w}$ then as $\word{v}_{[1,m]} < \word{w}_{[p_f + 1, m]}$ $\Angle{\word{v}}_{r - p_f} < \word{w}$.
Hence there can be no possible words in $\mathbf{C}(\word{w},i,r,\word{B_f}, \word{B_b}, p_f, p_b)$ when $p_f > 0$.

In the second case, as $\word{v}_{i + 1} = 0$, $S(\word{v}_{i + 1}) = 1$ and hence the prefix is not continued in the relabelled word.
This leaves $|\mathbf{C}(\word{w},i,r,WX[\word{B_f},0], WX[\word{B_b},1], 1, 0)|$ possible words in $\mathbf{C}(\word{w},i,r,\word{B_f}, \word{B_b}, p_f, p_b)$.

Similarly in the third case as $S(\word{v}_{i + 1}) = 1$ the prefix is continued in the relabelled word, giving a total of $|\mathbf{C}(\word{w},i,r,WX[\word{B_f},0], WX[\word{B_b},1], 1, p_b + 1)|$ words in $\mathbf{C}(\word{w},i,r,\word{B_f}, \word{B_b}, p_f, p_b)$.
\end{proof}

\noindent
Before we introduce the penultimate step, we need an auxiliary set.
The set $\mathbf{Y}(\word{w}, p_b, \word{B}_b, i)$ is introduced as the set of words of length $i$ where $\word{w}_{[1,p_b]} : \word{v} : \word{B}$ for every $\word{v} \in \mathbf{Y}(\word{w}, p_b, \word{B}_b, i)$.
The set $\mathbf{Y}(\word{w}, p_b, \word{B}_b, i)$ is treated as a black box for now.

\begin{lemma}
\label{lem:enclosing_i_ge_j}
If $m > i > r$ then the size of $\mathbf{C}(\word{w},i,r,\word{B_f}, \word{B_b}, p_f, p_b)$ is:
\begin{itemize}
    \item $|\mathbf{Y}(\word{w}, p_b + 1, WX[\word{B}_b,0], m - i - 1)|$ if $\word{w}_{p_f + 1} = \word{w}_{p_b + 1} = 1$.
    \item $|\mathbb{B}(\word{w}, i + 1, r, WX[\word{B_f},0], WX[\word{B_f},1], p_f + 1, p_b + 1)|$ if $\word{w}_{p_f + 1} = 0$ and $\word{w}_{p_b + 1} = 1$.
    \item $|\mathbf{Y}(\word{w}, 0, WX[\word{B}_b,1], m - i - 1)| + |\mathbb{B}(\word{w}, i + 1, r, WX[\word{B_f},1], WX[\word{B_f},0], p_f + 1, p_b + 1)|$ if $\word{w}_{p_f + 1} = 1$ and $\word{w}_{p_b + 1} = 0$.
    \item $|\mathbb{B}(\word{w}, i + 1, r, WX[\word{B_f},0], WX[\word{B_f},1], p_f + 1, 0)|$ if $\word{w}_{p_f + 1} = \word{w}_{p_b + 1} = 0$.
\end{itemize}
\end{lemma}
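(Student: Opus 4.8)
The plan is to mirror the case analyses already performed for the analogous situations: Lemmas~\ref{lem:i_ge_j} and~\ref{lem:i_ge_j_beta} (the ``$i>j$'' regime in the symmetric setting) and Lemmas~\ref{lem:enclosing_i_le_r} and~\ref{lem:enclosing_equals} (the ``$i<r$'' and ``$i=r$'' regimes here). I would fix an arbitrary $\word{v}\in\mathbf{C}(\word{w},i,r,\word{B_f},\word{B_b},p_f,p_b)$ with $m>i>r$. Since this set is produced by the recursive construction of Lemma~\ref{lem:enclosing_equals} followed by repeated application of Proposition~\ref{prop:enclosing_structure}, and since Condition~\ref{cond:bigger_eventually} forbids $\Angle{\word{v}}_r$ from having already been resolved above $\word{w}$ while the $\mathbf{Y}$ regime (below) absorbs the case where it has been resolved below $\word{w}$, in this regime $\word{v}_{[r+1,i]}$ equals the length-$(i-r)$ prefix of $\word{w}$ and $p_f=i-r$; rotation $r$ is ``in progress'' of being compared against $\word{w}$. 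The only new datum is $\word{v}_{i+1}\in\{0,1\}$, and I would split on the pair $(\word{w}_{p_f+1},\word{w}_{p_b+1})$, exactly as in the statement.

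The engine of the argument is two forcing principles. First, because Condition~\ref{cond:bigger_eventually} requires $\Angle{\word{v}}_r<\word{w}$: taking $\word{v}_{i+1}=\word{w}_{p_f+1}$ keeps rotation $r$ undecided (with updated front-prefix $p_f+1$), $\word{v}_{i+1}<\word{w}_{p_f+1}$ decides it in favour of $\word{v}<\word{w}$, and $\word{v}_{i+1}>\word{w}_{p_f+1}$ is impossible. Symmetrically, because Condition~\ref{cond:bigger_in_every_rotation} requires every rotation of $S(\word{v})$ to exceed $\word{w}$ and the back-prefix $S(\word{v}_{[1,i]})$ currently matches $\word{w}$'s prefix of length $p_b$: $S(\word{v}_{i+1})<\word{w}_{p_b+1}$ is impossible, $S(\word{v}_{i+1})=\word{w}_{p_b+1}$ extends the back match to $p_b+1$, and $S(\word{v}_{i+1})>\word{w}_{p_b+1}$ permanently settles that rotation of $S(\word{v})$ above $\word{w}$ and resets the back-prefix to $0$. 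Pushing the two possible values of $\word{v}_{i+1}$ through these rules, and updating $\word{B_f},\word{B_b}$ via Proposition~\ref{prop:enclosing_structure} and Corollary~\ref{col:enclosing_cartesian}, reproduces the four bullets: in the cases $\word{w}_{p_f+1}=\word{w}_{p_b+1}=1$ and $\word{w}_{p_f+1}=\word{w}_{p_b+1}=0$ exactly one value of $\word{v}_{i+1}$ survives, giving a single $\mathbf{Y}(\cdot)$ term and a single $\mathbf{C}(\cdot,i+1,\cdot)$ term respectively; the two mixed cases are handled likewise, with $\word{w}_{p_f+1}=1,\word{w}_{p_b+1}=0$ the only case in which both choices of $\word{v}_{i+1}$ remain legal, so the two counts add.

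The one genuinely delicate step — and the place I expect to spend most care — is justifying that whenever $\word{v}_{i+1}<\word{w}_{p_f+1}$ (so $\Angle{\word{v}}_r<\word{w}$ is already decided) the remaining $m-i-1$ symbols are constrained \emph{only} by keeping every rotation of $S(\word{v})$ above $\word{w}$, i.e.\ that the count collapses to a pure $\mathbf{Y}$ value with no further forward tracking. Concretely this means showing Condition~\ref{cond:enc_small_before_s} (all rotations $\Angle{\word{v}}_s>\word{w}$ for $s\in[r-1]$) is automatically preserved under arbitrary completion. I would argue by contradiction: if some $\Angle{\word{v}}_s$ with $s\in[r-1]$ were still undecided after $\word{v}_{[1,i+1]}$ is fixed, then $\word{v}_{[s+1,i+1]}=\word{w}_{[1,i+1-s]}$; restricting this equality to positions $r+1,\dots,i+1$ of $\word{v}$ and combining with $\word{v}_{[r+1,i]}=\word{w}_{[1,i-r]}$ and $\word{v}_{i+1}<\word{w}_{i-r+1}$ forces $\word{w}$ to have its length-$(i-r)$ prefix recurring at position $r-s+1$ and then a strictly smaller symbol, so that $\Angle{\word{w}}_{r-s}<\word{w}$, contradicting that $\word{w}$ is the lexicographically least (canonical) representative of $\necklace{w}$. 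Hence each such $\Angle{\word{v}}_s$ is already locked strictly above $\word{w}$, the reduction to $\mathbf{Y}(\word{w},p_b',\cdot,m-i-1)$ with the appropriate back state is valid, and in the sub-case $i+1=m$ this $\mathbf{Y}$ term degenerates to its base case while for $i+1<m$ it recurses. I would finish by recording the mechanical updates of $\word{B_f},\word{B_b},p_f,p_b$ for each surviving choice of $\word{v}_{i+1}$ and reading off the four stated values.
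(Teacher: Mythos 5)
Your proof follows essentially the same route as the paper's: a case analysis on the pair $(\word{w}_{p_f+1},\word{w}_{p_b+1})$ determining which values of $\word{v}_{i+1}$ are admissible under Conditions \ref{cond:bigger_in_every_rotation} and \ref{cond:bigger_eventually}, with the ``decided'' branches collapsing to $\mathbf{Y}$-counts and the ``undecided'' branches recursing into the set at $i+1$ with the updated bounding words and prefix lengths. The only difference is that you explicitly justify, via the canonicity of $\word{w}$, that Condition \ref{cond:enc_small_before_s} is automatically preserved once rotation $r$ is resolved below $\word{w}$ — a point the paper's proof leaves implicit — so your write-up is, if anything, slightly more complete.
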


\begin{proof}
In the first case, the only possible value of $\word{v}_{i + 1}$ is $0$.
Therefore $\word{v}_{[i - p_f, i + 1]} < \word{w}_{[1, p_f + 1]}$ and hence Condition \ref{cond:bigger_eventually} is satisfied for any possible suffix.
The number of suffixes satisfying Condition \ref{cond:bigger_in_every_rotation} is therefore given by $|\mathbf{Y}(\word{w}, p_b + 1, WX[\word{B}_b,0], m - i - 1)|$.

In the second case, the only possible value of $\word{v}_{i + 1}$ is $0$.
As this continues the prefixes in both the the original version and the relabelled version the size of $\mathbf{C}(\word{w},i,r,\word{B_f}, \word{B_b}, p_f, p_b)$ in this case is equal to the size of $\mathbb{B}(\word{w}, i + 1, r, WX[\word{B_f},0], WX[\word{B_f},1], p_f + 1, p_b + 1)$.

In the third case the value of $\word{v}_{i + 1}$ can be either $0$ or $1$.
If $\word{v}_{i + 1} = 0$ then $\word{v}_{[i - p_f, i + 1]} < \word{w}_{[1, p_f + 1]}$ and hence Condition \ref{cond:bigger_eventually} is satisfied for any possible suffix.
The number of suffixes satisfying Condition \ref{cond:bigger_in_every_rotation} is therefore given by $|\mathbf{Y}(\word{w}, 0, WX[\word{B}_b,1], m - i - 1)|$.
Similarly the number of words in where symbol $i + 1$ is 0 is equal to the size of the set $\mathbb{B}(\word{w}, i + 1, r, WX[\word{B_f},1], WX[\word{B_f},0], p_f + 1, p_b + 1)$.

In the fourth case, the only possible value of $\word{v}_{i + 1}$ is 0.
As $\word{v}_{[i - p_f, i + 1]} = \word{w}_{[1,p_f + 1]}$ the number of words in $\mathbf{C}(\word{w},i,r,\word{B_f}, \word{B_b}, p_f, p_b)$ equals the size of the set $\mathbb{B}(\word{w}, i + 1, r, WX[\word{B_f},0], WX[\word{B_f},1], p_f + 1, 0)$.
\end{proof}

\begin{lemma}
\label{lem:enclosing_i_eq_m}
If $i = m$, then the size of $\mathbf{C}(\word{w},i,r,\word{B_f}, \word{B_b}, p_f, p_b)$ is either:
\begin{itemize}
    \item 0 if $\word{w}{[1,p_f]} : \word{B_f}_{[1,m - p_f]} \geq \word{w}$ or $\word{w}_{[1,p_b]} : \word{B_b}{[1,m - p_b]} < \word{w}$.
    \item 1 if $\word{w}{[1,p_f]} : \word{B_f}_{[1,m - p_f]} < \word{w}$ and $\word{w}_{[1,p_b]} : \word{B_b}{[1,m - p_b]} \geq \word{w}$.
\end{itemize}
\end{lemma}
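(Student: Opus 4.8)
The plan is to treat $i=m$ as the base case of the dynamic programme. When $i=m$ no symbols of $\word{v}$ remain to be chosen, so by the Cartesian decomposition of Corollary~\ref{col:enclosing_cartesian} the value $|\mathbf{C}(\word{w},m,r,\word{B_f},\word{B_b},p_f,p_b)|$ records only whether the empty completion is admissible; hence it is $0$ or $1$, and the whole task is to read off this bit from the accumulated state $(\word{B_f},\word{B_b},p_f,p_b)$. The bookkeeping invariants (Conditions~\ref{cond:bounding_forward_enclosing} and~\ref{cond:bounding_backwords_enclosing} together with the defining properties of $p_f,p_b$) and Condition~\ref{cond:enc_small_before_s} have been maintained at every earlier step, and the rotations of $S(\word{v})$ other than the one still encoded by $(\word{B_b},p_b)$ were already charged to the sets $\mathbf{Y}$ in Lemma~\ref{lem:enclosing_i_ge_j}; so the only conditions left to verify are Condition~\ref{cond:bigger_eventually} ($\Angle{\word{v}}_r<\word{w}$) and this last instance of Condition~\ref{cond:bigger_in_every_rotation} ($\Angle{S(\word{v})}_s>\word{w}$).

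First I would unwind the front state. Because $p_f$ is the length of the longest suffix of $\word{v}_{[1,m]}$ that is a prefix of $\word{w}$ and, in the $i>r$ phase, the step rules only ever extend this match, the rotation $\Angle{\word{v}}_r$ begins with $\word{w}_{[1,p_f]}$ and agrees with $\word{w}$ exactly on those symbols; from position $p_f+1$ onward it is the prefix of length $m-p_f$ of the subword $\word{B_f}$, up to a choice that cannot change the order against $\word{w}$ because ``strictly bounded by $\word{B_f}$'' (Definition~\ref{def:bounding}) confines $\word{v}_{[1,m]}$ to a block of $\Sigma^m$ on which the comparison with $\word{w}$ is constant and equal to that of $\word{B_f}$ itself. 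Hence $\Angle{\word{v}}_r<\word{w}$ if and only if $\word{w}_{[1,p_f]}:\word{B_f}_{[1,m-p_f]}<\word{w}$; if instead $\word{w}_{[1,p_f]}:\word{B_f}_{[1,m-p_f]}\ge\word{w}$ then $\Angle{\word{v}}_r\ge\word{w}$, Condition~\ref{cond:bigger_eventually} fails, and the set is empty.

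The back state is handled symmetrically: $(p_b,\word{B_b})$ describes the one rotation of $S(\word{v})$ not yet forced above $\word{w}$, so that rotation equals $\word{w}_{[1,p_b]}$ followed by the length-$(m-p_b)$ prefix of $\word{B_b}$ up to the first position where it may leave $\word{w}$, and Condition~\ref{cond:bigger_in_every_rotation} holds for it exactly when $\word{w}_{[1,p_b]}:\word{B_b}_{[1,m-p_b]}\ge\word{w}$; if this comparison is $<\word{w}$ then some rotation of $S(\word{v})$ is smaller than $\word{w}$, so $S(\necklace{v})$ does not enclose $\word{w}$ from above and the set is again empty. Combining the two verdicts, $|\mathbf{C}(\word{w},m,r,\word{B_f},\word{B_b},p_f,p_b)|=1$ precisely when $\word{w}_{[1,p_f]}:\word{B_f}_{[1,m-p_f]}<\word{w}$ and $\word{w}_{[1,p_b]}:\word{B_b}_{[1,m-p_b]}\ge\word{w}$, and it is $0$ as soon as the front comparison is $\ge\word{w}$ or the back comparison is $<\word{w}$ — exactly the stated dichotomy.

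I expect the main obstacle to be the claim in the second and third paragraphs that the finite words $\word{w}_{[1,p_f]}:\word{B_f}_{[1,m-p_f]}$ and $\word{w}_{[1,p_b]}:\word{B_b}_{[1,m-p_b]}$ carry the \emph{same} order relation to $\word{w}$ as the genuine cyclic rotations $\Angle{\word{v}}_r$ and the critical rotation of $S(\word{v})$; that is, that the pairs $(p_f,\word{B_f})$ and $(p_b,\word{B_b})$ lose no information. Proving this needs Definition~\ref{def:bounding} (so ``strictly bounded'' really does localise $\word{v}$ to a block of $\Sigma^m$ on which the sign of the comparison with $\word{w}$ is constant), together with the invariant carried from step $i=r$ through Lemmas~\ref{lem:enclosing_equals} and~\ref{lem:enclosing_i_ge_j} that $\word{v}_1\cdots\word{v}_r$ and the symbols placed after position $r$ line up with $\word{B_f}$ and $\word{w}_{[1,p_f]}$ as advertised, and a check that the cyclic wrap-around is accounted for. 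Once that correspondence is in place the two one-line comparisons and the $0/1$ split are immediate.
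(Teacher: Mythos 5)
Your proposal is correct and follows essentially the same route as the paper: treat $i=m$ as the base case where only the empty completion remains, and use the definition of strict bounding to transfer the sign of the comparisons $\word{w}_{[1,p_f]}:\word{B_f}_{[1,m-p_f]}$ vs.\ $\word{w}$ and $\word{w}_{[1,p_b]}:\word{B_b}_{[1,m-p_b]}$ vs.\ $\word{w}$ to the actual rotations $\Angle{\word{v}}_r$ and the critical rotation of $S(\word{v})$, giving the $0/1$ dichotomy. The ``main obstacle'' you flag is handled in the paper by exactly the one-line appeal to the definition of strictly bounding subwords that you anticipate.
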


\begin{proof}
If $\word{w}{[1,p_f]} : \word{B_f}_{[1,m - p_f]} \geq \word{w}$, then as $\word{v} > \word{B}_f$, $\word{w}{[1,p_f]} : \word{v}_{[1,m - p_f]} > \word{w}$.
Therefore, there exists no such word in the set $\mathbf{C}(\word{w},i,r,\word{B_f}, \word{B_b}, p_f, p_b)$.
Similarly if $\word{w}_{[1,p_b]} : \word{B_b}{[1,m - p_b]} < \word{w}$, the $\Angle{S(\word{v})}_{m - p_f} < \word{w}$ contradicting Condition \ref{cond:bigger_in_every_rotation}.

On the other hand, if $\word{w}{[1,p_f]} : \word{B_f}_{[1,m - p_f]} < \word{w}$, then as $\word{B_f}_{[1,m - p_f]} < \word{w}_{[p_f, m]}$, by the definition of strictly bounding subwords $\word{v}_{[1,m - p_f]} < \word{w}_{[p_f, m]}$.
Similarly, as in the first case, $\word{w}_{[1,p_b]} : \word{B_b}{[1,m - p_b]}$ must be greater than or equal to $\word{w}$ as otherwise there is a contradiction with Condition \ref{cond:bigger_in_every_rotation}.
As there is no possible extension of $\word{v}$ other than the empty word.
\end{proof}

\noindent
Finally, we show how to compute the size of $\mathbf{Y}(\word{w}, p_b, \word{B}_b, i)$.
\begin{lemma}
\label{lem:computing_Y}
The size of $\mathbf{Y}(\word{w}, p_b, \word{B_b}, i)$ is:
\begin{itemize}
    \item 0 if $i = 0$ and $\word{w}_{[1,p_b]} : \word{B_b} < \word{w}$.
    \item 1 if $i = 0$ and $\word{w}_{[1,p_b]} : \word{B_b} \geq \word{w}$.
    \item $|\mathbf{Y}(\word{w}, p_b + 1, WX[\word{B_b},1], i - 1)|$ if $\word{w}_{p_b + 1} = 1$ and $i > 0$.
    \item $|\mathbf{Y}(\word{w}, p_b + 1, WX[\word{B_b},0], i - 1)| + |\mathbf{Y}(\word{w}, 0, WX[\word{B_b},1], i - 1)|$ if $\word{w}_{p_b + 1} = 0$ and $i > 0$.
\end{itemize}
\end{lemma}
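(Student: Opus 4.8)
The plan is to establish the four cases by induction on $i$, in each step branching on the value of the first symbol $\word{v}_1$ of a word $\word{v} \in \mathbf{Y}(\word{w}, p_b, \word{B_b}, i)$. Recall that $\mathbf{Y}(\word{w}, p_b, \word{B_b}, i)$ collects exactly the words $\word{v} \in \Sigma^i$ for which every suffix of $\word{w}_{[1,p_b]} : \word{v} : \word{B_b}$ is at least $\word{w}$; here $p_b$ records how much of a prefix of $\word{w}$ has already been matched, and $\word{B_b}$ is the bounding subword of $\word{w}$ for the wrap-around tail. Since $\word{w}$ is canonical we have $\word{w}_1 = 0$, and the only comparison against $\word{w}$ that a newly placed symbol can decide is the one coming from the matched prefix $\word{w}_{[1,p_b]}$ followed by the freshly extended word; all shorter suffixes are controlled either by the canonicity of $\word{w}$ or, once we reach the tail, by the bounding property of $\word{B_b}$ in the sense of Definition \ref{def:bounding}.

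For the base case $i = 0$ the empty word is the unique candidate, so membership reduces to whether $\word{w}_{[1,p_b]} : \word{B_b} \geq \word{w}$: if it is, the empty word is valid and the size is $1$; otherwise the size is $0$, which gives the first two bullets. For the inductive step with $i > 0$ (and $p_b < |\word{w}|$, so position $p_b + 1$ of $\word{w}$ exists), split on $\word{w}_{p_b + 1}$. If $\word{w}_{p_b + 1} = 1$, then choosing $\word{v}_1 = 0$ would make $\word{w}_{[1,p_b]} : \word{v} : \word{B_b} < \word{w}$, so $\word{v}_1 = 1$ is forced; the matched prefix grows to length $p_b + 1$, the tail bound updates to $WX[\word{B_b},1]$ by Proposition \ref{prop:complexity_WX}, and the remaining $i - 1$ symbols range freely over $\mathbf{Y}(\word{w}, p_b + 1, WX[\word{B_b},1], i - 1)$, yielding the third bullet. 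If $\word{w}_{p_b + 1} = 0$, both values of $\word{v}_1$ are admissible: $\word{v}_1 = 0$ continues the matched prefix (recurse with parameters $p_b + 1$ and $WX[\word{B_b},0]$), while $\word{v}_1 = 1 > \word{w}_{p_b + 1}$ makes the long suffix strictly exceed $\word{w}$ and resets the matched prefix to $0$ (recurse with parameters $0$ and $WX[\word{B_b},0]$); the two options are disjoint and exhaustive, so the count is their sum, which is the fourth bullet.

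The step I expect to be the main obstacle is the justification, at both the base case and each recursive call, that the triple $(p_b, \word{B_b}, i)$ is a sufficient state — that is, that the matched-prefix length together with the bounding subword really does capture every suffix comparison that could exclude a word, and that appending a symbol via $WX$ propagates this correctly rather than leaving some shorter rotation under $\word{w}$. This is the same bookkeeping already carried out for the analogous auxiliary set in Lemma \ref{lem:Y_size_of}, and I would argue it here in the same way, appealing to Definition \ref{def:bounding} for the behaviour of bounding subwords and to Proposition \ref{prop:complexity_WX} for the correctness of the $WX$ update.
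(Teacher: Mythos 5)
Your proof takes essentially the same route as the paper's: induction on $i$ with the empty word as the base case, then branching on the first symbol of $\word{v}$ according to the value of $\word{w}_{p_b+1}$, updating the matched-prefix length and the bounding subword via $WX$. One slip to fix: in the fourth case the $\word{v}_1 = 1$ branch must recurse on $\mathbf{Y}(\word{w}, 0, WX[\word{B_b},1], i-1)$ --- the bound is updated with the symbol actually appended --- whereas you wrote $WX[\word{B_b},0]$, which matches neither the stated formula nor your own (correct) handling of the third case.
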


\begin{proof}
In the first case, as $i = 0$ the only possible word in $\mathbf{Y}(\word{w}, p_b, \word{B}_b, i)$ is the empty word $\emptyset$.
As $\word{w}_{[1,p_b]} : \word{B_b} < \word{w}$, this contradicts the definition of $\mathbf{Y}(\word{w}, p_b, \word{B}_b, i)$.
Therefore, the size of $\mathbf{Y}(\word{w}, p_b, \word{B}_b, i)$ must be 0.
In the second case, as $\word{w}_{[1,p_b]} : \word{B_b} \geq \word{w}$, the empty word is a valid choice for $\word{w}_{[1,p_b]} : \word{B_b} \geq \word{w}$.
Therefore, the size of $\mathbf{Y}(\word{w}, p_b, \word{B}_b, i)$ is 1.

When $i > 0$, then there are two possibilities based on the value of $\word{w}_{p_b + 1}$.
If $\word{w}_{p_b + 1} = 1$ then the first symbol of every word in $\mathbf{Y}(\word{w}, p_b, \word{B}_b, i)$ must be 1.
Therefore, the number of words in $\mathbf{Y}(\word{w}, p_b, \word{B}_b, i)$ is equal to the number of possible suffixes of the words in $\mathbf{Y}(\word{w}, p_b, \word{B}_b, i)$ of length $i - 1$, which is equal to $|\mathbf{Y}(\word{w}, p_b + 1, WX[\word{B}_b,1], i - 1)|$.
If $\word{w}_{p_b + 1} = 0$ then the first symbol of every word in $\mathbf{Y}(\word{w}, p_b, \word{B}_b, i)$ can be either $0$ or $1$.
The number of possible suffixes of the words in $\mathbf{Y}(\word{w}, p_b, \word{B}_b, i)$ of length $i - 1$ when the first symbol is $0$ is is $|\mathbf{Y}(\word{w}, p_b + 1, WX[\word{B}_b,0], i - 1)|$.
On the other hand, the number of possible suffixes of the words in $\mathbf{Y}(\word{w}, p_b, \word{B}_b, i)$ of length $i - 1$ with the first symbol $1$ is $|\mathbf{Y}(\word{w}, 0, WX[\word{B}_b,1], i - 1)|$.
Therefore the total number of words in  $\mathbf{Y}(\word{w}, p_b, \word{B}_b, i)$ is $|\mathbf{Y}(\word{w}, p_b + 1, WX[\word{B_b},0], i - 1)| + |\mathbf{Y}(\word{w}, 0, WX[\word{B_b},1], i - 1)|$.
\end{proof}

\begin{lemma}
\label{lem:counting_gamma}
The number of words in $\gamma(\word{w},m,r)$ can be computed in $O(n^5)$ time.
\end{lemma}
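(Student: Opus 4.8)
The plan is to mirror the structure used for $\beta(\word{w},j,r)$ in Lemma \ref{lem:counting_beta}, since $\gamma(\word{w},m,r)$ plays the analogous role for enclosing necklaces. First I would establish that the auxiliary array $\mathbf{Y}(\word{w}, p_b, \word{B_b}, i)$ can be tabulated for all $p_b, i \in [m]$ and all $\word{B_b} \sqsubseteq \word{w}$ in $O(n^4)$ time: there are $O(n)$ choices of $i$, $O(n)$ choices of $p_b$, and $O(n^2)$ subwords $\word{B_b}$ of $\word{w}$, and by Lemma \ref{lem:computing_Y} each entry is obtained in $O(1)$ time from entries with smaller $i$ (the base case $i=0$ costs $O(n)$ for the single comparison $\word{w}_{[1,p_b]}:\word{B_b}$ versus $\word{w}$, contributing only $O(n^3)$ overall). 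This $\mathbf{Y}$ table is used as a black box by Lemmas \ref{lem:enclosing_i_ge_j} and \ref{lem:enclosing_i_eq_m}.

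Next I would set up the dynamic program for $|\mathbf{C}(\word{w},i,r,\word{B_f}, \word{B_b}, p_f, p_b)|$. The state has six coordinates: $i \in [m]$, the two bounding subwords $\word{B_f},\word{B_b} \sqsubseteq_i \word{w}$ (each ranging over $O(n^2)$ possibilities but constrained to have length $i$, so $O(n)$ each once $i$ is fixed — wait, more carefully $O(n)$ subwords of each fixed length, giving $O(n)$ choices for each of $\word{B_f}$ and $\word{B_b}$), and the two prefix lengths $p_f, p_b \in [0,i]$. Fixing $i$, this is $O(n^4)$ states, and across all $i$ it is $O(n^5)$ states. By Corollary \ref{col:enclosing_cartesian} and Lemmas \ref{lem:enclosing_i_le_r}, \ref{lem:enclosing_equals}, \ref{lem:enclosing_i_ge_j}, and \ref{lem:enclosing_i_eq_m}, each entry is computed in $O(1)$ time from at most two entries at index $i+1$ together with a single $\mathbf{Y}$ lookup (the base case $i=m$ again costs $O(n)$ per entry for the comparisons against $\word{w}$, but there are only $O(n^2)$ such boundary entries for each fixed $(\word{B_f},\word{B_b})$ pattern, so $O(n^3)$ in total). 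Processing $i$ in decreasing order from $m$ down to $1$ therefore fills the whole table in $O(n^5)$ time.

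Finally I would reconstruct $|\gamma(\word{w},m,r)|$ from the table. As in Lemma \ref{lem:counting_beta}, I would iterate over every subword $\word{u} : x$ of $\word{w}$ (there are $O(n^2)$ of them) that first fails to be a subword of $\word{w}$ — i.e. $\word{u} \sqsubseteq_{i-1} \word{w}$ but $\word{u}:x \not\sqsubseteq \word{w}$ — together with its mirror situation for $S(\word{u}:x)$, compute the induced values of $\word{B_f}, \word{B_b}, p_f, p_b$, check in $O(n)$ time the side condition that $\word{u}:x$ (and $S(\word{u}:x)$) is a legitimate prefix of a word in $\gamma(\word{w},m,r)$ (the condition depends on whether $i<r$, $i=r$, or $i>r$, exactly as in the $\beta$ case, ensuring Conditions \ref{cond:enc_small_before_s}--\ref{cond:bigger_eventually} are not yet violated), and then add the precomputed $|\mathbf{C}(\word{w},i,r,\word{B_f},\word{B_b},p_f,p_b)|$ to the running total; words none of whose prefixes leave the subword-set of $\word{w}$ are counted by the single entry $|\mathbf{C}(\word{w},0,r,\emptyset,\emptyset,0,0)|$. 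This last phase costs $O(n^2 \cdot n) = O(n^3)$ time. Summing the three phases gives $O(n^4) + O(n^5) + O(n^3) = O(n^5)$.

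The main obstacle I anticipate is not the complexity bookkeeping but verifying that the case split in the recurrences is genuinely exhaustive and non-overlapping — in particular that for $i > r$ the interaction between Condition \ref{cond:bigger_eventually} (which becomes "already satisfied" once the original prefix is broken below $\word{w}$) and Condition \ref{cond:bigger_in_every_rotation} (which must still be enforced on every rotation of $S(\word{v})$, hence the switch to counting via $\mathbf{Y}$) is handled correctly, and that the boundary cases $i = r$ and $i = m$ dovetail with these. Once the recurrences of Lemmas \ref{lem:enclosing_i_le_r}--\ref{lem:enclosing_i_eq_m} are taken as given, the counting argument is a routine transcription of the $\beta$ argument.
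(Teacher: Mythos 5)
Your proposal follows essentially the same three-phase argument as the paper: tabulate $\mathbf{Y}(\word{w},p_b,\word{B_b},i)$ in $O(n^4)$ time, fill the table of $|\mathbf{C}(\word{w},i,r,\word{B_f},\word{B_b},p_f,p_b)|$ values in decreasing $i$ using the recurrences of Lemmas \ref{lem:enclosing_i_le_r}--\ref{lem:enclosing_i_eq_m}, and then account for prefixes that are subwords of $\word{w}$ by iterating over the $O(n^2)$ subwords with an $O(n)$ validity check each. The only quibble is your $O(n^3)$ accounting of the $i=m$ boundary layer (the paper charges $O(n^5)$ there, i.e.\ $O(n^4)$ base entries at $O(n)$ each), but this slip does not affect the final $O(n^5)$ bound.
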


\begin{proof}
This argument follows the same outline as Lemma \ref{lem:counting_alpha}.
First, consider the value of $\mathbf{Y}(\word{w}, p_b, \word{B_b}, i)$.
Note that there are at most $n$ possible values of $p_b$ and $i$, and $n^2$ possible values of $\word{B_b} \sqsubseteq \word{w}$.
Following the dynamic programming approach outlined by Lemma \ref{lem:computing_Y}, the size of  $\mathbf{Y}(\word{w}, p_b, \word{B_b}, i)$ can be computed in at most $n$ time if $i = 0$ or in $O(1)$ operation $\mathbf{Y}(\word{w}, p_b', \word{B_b}', i - 1)$ has been computed for every $p_b' \in \{p_b + 1, 0\}$ and $\word{B_b}' \sqsubseteq \word{w}$.
As there are at most $O(n^3)$ possible values of $p_b \in [n]$ and $\word{B_b} \sqsubseteq \word{w}$, therefore every value of $\mathbf{Y}(\word{w}, p_b, \word{B_b}, 0)$ can be computed in $O(n^4)$ time.
Further, by starting with $i = 0$ and computing for increasing value of $i$, the values of each $\mathbf{Y}(\word{w}, p_b, \word{B_b}, i)$ for $i > 0$ can be computed in $O(1)$ time per value, requiring $O(n^4)$ time in total.

Moving to the problem of computing the size of $\mathbf{C}(\word{w},i,r,\word{B_f}, \word{B_b}, p_f, p_b)$.
From Lemma \ref{lem:enclosing_i_eq_m}, the size of $\mathbf{C}(\word{w},i,r,\word{B_f}, \word{B_b}, p_f, p_b)$ can be computed in at most $O(n)$ operations if $i = m$.
Note that there are at most $O(n^4)$ possible values of $p_f, p_b \in [n], \word{B_f}, \word{B_b} \sqsubseteq_i \word{w}$.
Therefore, the value of $\mathbf{C}(\word{w},i,r,\word{B_f}, \word{B_b}, p_f, p_b)$ for every $i = m,p_f, p_b \in [n], \word{B_f}, \word{B_b} \sqsubseteq_i \word{w}$ can be computed in $O(n^5)$ time.

Using Lemmas \ref{lem:enclosing_i_le_r}, \ref{lem:enclosing_equals}, and \ref{lem:enclosing_i_ge_j}, the size of $\mathbf{C}(\word{w},i,r,\word{B_f}, \word{B_b}, p_f, p_b)$ can be computed in $O(1)$ time from the values of $\mathbf{Y}(\word{w}, p_b', \word{B_b}', m - i - 1)$ and $\mathbf{C}(\word{w},i + 1,r,\word{B_f}', \word{B_b}', p_f', p_b')$.
Therefore, the size of $\mathbf{C}(\word{w},i,r,\word{B_f}, \word{B_b}, p_f, p_b)$ can be computed for every $i,p_f,p_b \in [n], \word{B_f}, \word{B_b} \sqsubseteq_i \word{w}$.

The final case to consider are words where some prefix is a subword of $\word{w}$.
This can be done by looking at every word $\word{v} \sqsubseteq \word{w}$ and determining if $\word{v}$ can be used as a prefix for some word in $\gamma(\word{w},m,r)$.
Let $\word{v} \sqsubseteq_i \word{w}$ be a subword of $\word{w}$ and let $x \in \{0,1\}$ by some symbol such that $\word{v} : x \not\subseteq \word{w}$.
Further, let $p_f$ be the length of the longest suffix of $\word{v} : x$ that is a prefix of $\word{w}$, i.e. the largest value such that $(\word{v} : x)_{[i + 1 - p_f, i + 1]} = \word{w}_{[1,p_f]}$.
Similarly let $p_b$ be the length of the longest suffix of $S(\word{v}:x)$ that is a prefix of $\word{w}$, i.e. the largest value such that $S(\word{v} : x)_{[i + 1 - p_b, i + 1]} = \word{w}_{[1,p_b]}$.
$\word{v}$ is the prefix of some word in $\gamma(\word{w},m,r)$ if and only if there exists some symbol $x \in \{0,1\}$ where:
\begin{itemize}
    \item For every $s \in [i + 1 - p_b]$, $S(\word{v} : x)_{[i + 1 - p_b, i + 1]} > \word{w}_{[1,s]}$.
    \item If $r = 0$, then $\word{v} : x < \word{w}$.
    \item If $i + 1 \leq r$ then $\word{v} : x < \word{w}$.
    \item If $i + 1 > r$ then $p_f = i + 1 - r$.
\end{itemize}
For any such word, let $\word{B_b}$ be the subword of length $i + 1$ bounding $\word{v} : x$ and let $\word{B_f} : x$ be the subword bounding $S(\word{v} : x)$.
The number of subwords sharing $\word{v}:x$ equals the size of the set $\mathbf{C}(\word{w},i,r,\word{B_f}, \word{B_b}, p_f, p_b)$.
Note that as there are at most $n^2$ subwords in $\word{w}$, and checking the above conditions takes at most $O(n)$ time.
Therefore the number of such words can be computed in $O(n^3)$ time if $\mathbf{C}(\word{w},i,r,\word{B_f}, \word{B_b}, p_f, p_b)$ has been precomputed for every $i,p_b,p_f \in [m]$ and $\word{B_f}, \word{B_b} \sqsubseteq_i \word{w}$.
Hence the size of $\gamma(\word{w},m,r)$ can be computed in $O(n^5)$ time.
\end{proof}

\begin{theorem}
\label{thm:enclosing}
Let $RankEN(\word{w},m)$ be the rank of $\word{w}$ within the set of necklaces of length $m$ which enclose $\word{w} \in \Sigma^n$.
The value of $RankEN(\word{w},n)$ can be computed in $O(n^6 \log n)$ time for any $m \leq n$.
\end{theorem}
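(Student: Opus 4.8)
The plan is to assemble Theorem~\ref{thm:enclosing} from the reductions established above, with Lemma~\ref{lem:counting_gamma} serving as the computational engine. First I would record that the sets $\gamma(\word{w},m,r)$, taken over all rotations $r$, partition $\mathbf{EN}(\word{w},m)$: by the definition of an enclosing necklace, every word $\word{v}$ of length $m$ lying in a necklace that encloses $\word{w}$ has some rotation smaller than $\word{w}$ (its canonical representative is), hence a well-defined smallest such rotation $r$, which places $\word{v}$ in exactly one $\gamma(\word{w},m,r)$. Consequently $|\mathbf{EN}(\word{w},m)| = \sum_{r} |\gamma(\word{w},m,r)|$, and since there are at most $n$ rotations and each term is computed in $O(n^5)$ time by Lemma~\ref{lem:counting_gamma}, the value $|\mathbf{EN}(\word{w},m)|$ can be obtained in $O(n^6)$ time.

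Next I would chain the three reduction lemmas. By Lemma~\ref{lem:EL_from_words}, $|\mathbf{EL}(\word{w},m)| = \sum_{d\mid m}\mu(m/d)\,|\mathbf{EN}(\word{w},d)|$; by Lemma~\ref{lem:EL_to_rank}, $RankEL(\word{w},m) = |\mathbf{EL}(\word{w},m)|/m$; and by Lemma~\ref{lem:ranking_enclosing_words}, $RankEN(\word{w},m) = \sum_{d\mid m} RankEL(\word{w},d)$. Unfolding these, $RankEN(\word{w},m)$ becomes a fixed rational linear combination of the quantities $|\mathbf{EN}(\word{w},d)|$ as $d$ ranges over the divisors of $m$ (divisors of divisors of $m$ are again divisors of $m$), with coefficients determined by $\mu$ and the lengths. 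So the only genuinely expensive subroutine invoked is the computation of $|\mathbf{EN}(\word{w},d)|$, performed once for each divisor $d$ of $m$.

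To finish, I would bound the cost. There are $O(\log n)$ divisors of $m \le n$, and for each one the computation $|\mathbf{EN}(\word{w},d)| = \sum_{r} |\gamma(\word{w},d,r)|$ takes $O(n^6)$ time as above; here it is essential that Lemma~\ref{lem:counting_gamma} bounds the cost of each $\gamma$-count in terms of $n = |\word{w}|$ (the complexity is driven by the $O(n^2)$ subwords of $\word{w}$), so it is harmless that $d$ may be strictly smaller than $n$. Forming the M\"obius sums of Lemma~\ref{lem:EL_from_words} and the divisor sum of Lemma~\ref{lem:ranking_enclosing_words} from the precomputed values $|\mathbf{EN}(\word{w},d)|$ then costs only $O(\log^2 n)$ arithmetic operations, which is dominated. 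Hence $RankEN(\word{w},m)$ is computable in $O(n^6 \log n)$ time for every $m \le n$, as claimed; note this is a $\log n$ factor rather than $\log^2 n$ because, unlike the symmetric case, there is no inner loop over rotations that itself carries a divisor-sum overhead.

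I do not expect a serious obstacle at the level of this theorem: the structural and algorithmic heavy lifting --- the subdivision into the sets $\mathbf{C}$, the dynamic program over the subword lattice (Proposition~\ref{prop:enclosing_structure} and Corollary~\ref{col:enclosing_cartesian}), the auxiliary set $\mathbf{Y}$, and in particular the correctness and $O(n^5)$ running time of counting $\gamma(\word{w},m,r)$ --- is already discharged in Lemma~\ref{lem:counting_gamma} and the lemmas feeding it. The one point needing care is making the partition claim precise, i.e.\ verifying that Conditions~\ref{cond:enc_small_before_s}--\ref{cond:bigger_eventually} defining $\mathbf{C}$ (hence $\gamma$) exactly capture ``a length-$m$ word in a necklace enclosing $\word{w}$ whose minimal rotation below $\word{w}$ is $r$,'' including the degenerate cases $r$ minimal at $0$ or $1$ and periodic $\word{v}$; but this is immediate from the definitions of enclosing necklaces and of $\gamma$.
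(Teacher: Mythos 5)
Your proposal is correct and follows essentially the same route as the paper: decompose $\mathbf{EN}(\word{w},m)$ as $\sum_{r}|\gamma(\word{w},m,r)|$ via Lemma~\ref{lem:counting_gamma}, then chain Lemmas~\ref{lem:ranking_enclosing_words}, \ref{lem:EL_to_rank}, and \ref{lem:EL_from_words} into a M\"obius-weighted divisor sum over the $O(\log n)$ values $|\mathbf{EN}(\word{w},d)|$, giving $O(n^6\log n)$ overall. If anything, your version is slightly more careful than the paper's, which omits the $1/d$ normalisation from Lemma~\ref{lem:EL_to_rank} in its displayed formula and does not spell out the partition of $\mathbf{EN}(\word{w},m)$ by minimal rotation.
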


\begin{proof}
Let $\mathbf{EN}(\word{w},m)$ be the set of words belonging to a necklace of length $n$ with that enclose $\word{w}$.
By definition, observe that $|\mathbf{EN}(\word{w},m)| = \sum\limits_{r \in [m]} \gamma(\word{w},m,r)$.
Following Lemma \ref{lem:counting_gamma}, the size of $\gamma(\word{w},m,r)$ can be computed in at most $O(n^5)$ time.
Therefore the size of $|\mathbf{EN}(\word{w},m)|$ can be computed in $O(n^6 \log n)$ time.
From Lemmas \ref{lem:EL_from_words}, and \ref{lem:ranking_enclosing_words}, the value of $RankEN(\word{w},n)$ is given by the equation:
$$\sum\limits_{m | n} \sum\limits_{d | m} \mu\left(\frac{m}{d}\right) |\mathbf{EN}(\word{w},d)|$$

As there are at most $\log n$ factors of $n$, the size of $\mathbf{EN}(\word{w},d)$ needs to be evaluated for at most $O(\log n)$ values of $d$.
Therefore, a total of $O(n^6 \log n)$ time is needed to compute the size of $\mathbf{EN}(\word{w},d)$ for every factor $d$ of $n$.
Once the size of $\mathbf{EN}(\word{w},d)$ has been computed for every factor $d$ of $n$, the above equation can be evaluated in $O(\log^2 n)$ time, giving a total time complexity of $O(n^6 \log n)$.
\end{proof}

% \begin{proof}[Sketch]
% A full proof is provided in Appendix \ref{app:enclosing}.
% \end{proof}

% \begin{proof}
% Let $\mathbf{EN}(\word{w},m)$ be the set of words belonging to a necklace of length $n$ with an enclosing labelling of $\word{w}$.
% By definition, observe that $|\mathbf{EN}(\word{w},m)| = \sum\limits_{r \in [m]} \gamma(\word{w},m,r)$.
% Following Lemma \ref{lem:counting_gamma}, the size of $\gamma(\word{w},m,r)$ can be computed in at most $O(n^5)$ time.
% Therefore the size of $|\mathbf{EN}(\word{w},m)|$ can be computed in $O(n^6 \log n)$ time.
% From Lemmas \ref{lem:EL_from_words}, and \ref{lem:ranking_enclosing_words}, the value of $RankEN(\word{w},n)$ is given by the equation:
% $$\sum\limits_{m | n} \sum\limits_{d | m} \mu\left(\frac{m}{d}\right) |\mathbf{EN}(\word{w},d)|$$

% As there are at most $\log n$ factors of $n$, the size of $\mathbf{EN}(\word{w},d)$ needs to be evaluated for at most $O(\log n)$ values of $d$.
% Therefore, a total of $O(n^6 \log n)$ time is needed to compute the size of $\mathbf{EN}(\word{w},d)$ for every factor $d$ of $n$.
% Once the size of $\mathbf{EN}(\word{w},d)$ has been computed for every factor $d$ of $n$, the above equation can be evaluated in $O(\log^2 n)$ time, giving a total time complexity of $O(n^6 \log n)$.
% \end{proof}

\bibliography{main.bib}
\bibliographystyle{plain}

% \section{Theoretical Tools}

% In this appendix we provide the 

% \section{Full proofs for Symmetric Necklaces}
\label{app:symmetric}

\end{document}